\newtheorem{theorem}{Theorem}[section]
\newtheorem{lemma}[theorem]{Lemma}
\newtheorem{proposition}[theorem]{Proposition}
\theoremstyle{definition}
\newtheorem*{definition}{Definition}
\theoremstyle{remark}
\newtheorem{observation}{Remark}
\newcommand{\R}{\mathbb R}
\newcommand{\dd}{\,d}
\newcommand{\les}{\lesssim}
\newcommand{\be}{\begin{equation}}
\newcommand{\ee}{\end{equation}}
\newcommand{\lb}{\label}
\newcommand{\mc}{\mathcal}
\DeclareMathOperator{\sgn}{sgn}
\DeclareMathOperator{\re}{Re}
\DeclareMathOperator{\im}{Im}
\DeclareMathOperator{\supp}{supp}
\newcommand{\U}{\mc U}
\newcommand{\B}{\mc B}
\newcommand{\one}{{\mathbb 1}}
\numberwithin{equation}{section}
\title{Decay estimates for the wave equation in two dimensions}
\author[M.\ Beceanu]{Marius Beceanu}
\address{UC Berkeley Mathematics Department, Berkeley, CA 94720}
\email{mbeceanu@berkeley.edu}
\date{}
\subjclass[2010]{35L05; 35B34, 35L71, 35B45}
\keywords{wave equation, evolution equation, Hamiltonian equation, dimension two, dispersive estimates, reversed Strichartz estimates, pointwise decay estimates, weighted estimates, Strichartz estimates}
\begin{document}
\maketitle
\begin{abstract} We establish Strichartz estimates (both reversed and some direct ones), pointwise decay estimates, and weighted decay estimates for the linear wave equation in dimension two with an almost scaling-critical potential, in the case when there is no resonance or eigenvalue at the edge of the spectrum.

We also prove some simple nonlinear applications.
\end{abstract}

\section{Introduction}
\subsection{Results} Consider the linear wave equation with a real-valued scalar potential in dimension two:
\be\lb{eq_wave}
f_{tt} - \Delta f + V f = F, f(0)=f_0, f_t(0)=f_1.
\ee

A natural condition for equation (\ref{eq_wave}) to be well-posed is that $H=-\Delta+V$ should be self-adjoint. The existence of a self-adjoint extension was shown in \cite{simon} under the assumption that
$$
\lim_{\epsilon \to 0} \sup_y \int_{|x-y|<\epsilon} |V(x)| \log_-|x-y|  = 0.
$$
If $H=-\Delta+V$ is self-adjoint, then the solution is given by
$$
f(t) = \cos(t\sqrt H) f_0 + \frac {\sin(t\sqrt H)}{\sqrt H} f_1 + \int_0^t \frac {\sin((t-s)\sqrt H)}{\sqrt H} F(s) \dd s.
$$
The following quantity, called energy, is constant as a function of $t$ and remains bounded for all time if it is initially finite:
$$
E[f](t) := \int_{\R^2 \times \{t\}} f_t^2 + |\nabla f|^2 + V f^2 \dd x.
$$

Under rather general assumptions, the spectrum of the Hamiltonian $H$ consists of  the absolutely continuous component $[0, \infty)$ and the possibly empty point spectrum, containing negative energy eigenstates and zero energy eigenfunctions or resonances (see \cite{ioje} and \cite{kota} concerning the absence of positive eigenvalues).

The solution's projection on the point spectrum of $H$ lacks any decay and may even have exponential growth. Thus, in order to obtain dispersive estimates, we must first project away from the point spectrum. Zero energy states pose an even more serious obstruction: even after projecting them away, dispersion may only take place at a suboptimal rate or not at all. This is why we assume the absence of zero energy eigenfunctions and resonances in this paper.

Since the free ($V=0$) linear equation (\ref{eq_wave}) presents a resonance at energy zero in dimension two, some estimates obtained in the presence of a potential may be better than in the free case, as long as the potential eliminates the zero energy resonance. This is the case for inequalities (\ref{c1}), (\ref{c3}), (\ref{c8}), (\ref{c9}), (\ref{d1}), and (\ref{d2}) in this paper.

In the generic case when there is no resonance or eigenvalue at the edge of the spectrum of $H$, we establish several estimates for the projection on the continuous spectrum of the solution to equation (\ref{eq_wave}). We prove pointwise $t^{-1/2}$ decay estimates, weighted integrable-in-time decay estimates, reversed Strichartz estimates, and some ordinary Strichartz estimates. All of them take place under almost scaling-invariant (hence optimal) decay conditions on the potential $V$.

One novelty of our results is that we prove reversed Strichartz estimates for the wave equation in dimension two, that is estimates that hold in the reversed Strichartz norms
$$
\|f\|_{L^q_x L^r_t} := \left(\int_{\R^2} \|f(x, \cdot)\|_{L^r_t}^q \dd x\right)^{\frac 1 q}.
$$
Such estimates have applications in the study of semilinear equations, see for example \cite{beceanu2} or Proposition \ref{prop_semilinear}. Another application is proving the pointwise convergence of the solution of the wave equation to the initial data, see \cite{becgol}.

Although there is some overlap between the range of allowed exponents for reversed Strichartz estimates and the ones for ordinary Strichartz estimates, neither of them is contained in the other. Indeed, in dimension two, reversed Strichartz estimates hold for $(\frac 1 q, \frac 1 r)$ inside the rectangle with vertices $(\frac 1 8, \frac 1 2)$, $(0, \frac 1 2)$, $(\frac 1 8, 0)$, and $(0, 0)$, while ordinary Strichartz estimates (see \cite{keel}) hold inside the right triangle with vertices $(\frac 1 2, 0)$, $(0, \frac 1 4)$, and $(0, 0)$. For comparison, Strichartz estimates with radial data (see \cite{fang} or \cite{sterbenz}) hold inside the right triangle with vertices $(\frac 1 2, 0)$, $(0, \frac 1 2)$, and $(0, 0)$.

In stating the theorems below, we assume that $0$ is a regular point of the spectrum of $-\Delta+V$. This property is defined by the Definition on p.\ 12. Let the spaces $\mc K_\theta$ and $\mc K^*_\theta$ be defined by (\ref{local_kato}) and (\ref{kato*}) respectively. We also denote by $L^{p, q}$ the usual Lorentz spaces, see \cite{bergh} for their definition and properties.

Our main result is as follows:

\begin{theorem}\lb{dispersive} Assume that $(1+\log_+|x|)^2 V \in L^1_x$, for some $q \in (1, \infty]$ $V \in L^q_{loc}$ and $\lim_{R \to \infty} R^{1-1/q} \|V\|_{L^q(|x|\in [R, 2R])}=0$, and $0$ is a regular point of the spectrum of $H=-\Delta+V$. Let $P_c$ be the projection on the continuous spectrum of $H$. Then
\be\lb{c1}
\int_1^\infty \left|\frac{\sin(t\sqrt H) P_c}{\sqrt H}\right|(x, y) \dd t \les (1+\log_+|x|)(1+\log_+|y|),
\ee
\be\lb{c2}
\int_0^1 \left|\frac{\sin(t\sqrt H) P_c}{\sqrt H} f\right|(x) \dd t \les \|f\|_{L^1_x \cap \mc K_1},
\ee
and for $p \in [1, \infty]$
\be\lb{c3}
\left\|\int_{-\infty}^t \frac{\sin((t-s)\sqrt H) P_c}{\sqrt H} F(s) \dd s\right\|_{(1+\log_+|x|) L^\infty_x L^p_t} \les \|F\|_{((1+\log_+|x|)^{-1} L^1_x \cap \mc K_1) L^p_s}.
\ee
In addition,
\be\lb{c40}
\int_0^1 |\cos(t\sqrt H) P_c f|(x) \dd t \les \|\nabla f\|_{L^1_x},
\ee
\be\lb{c4}
(1+\log_+|x|)^{-1} \int_0^\infty|\cos(t\sqrt H) P_c f|(x) \dd t \les \|\nabla f\|_{L^1_x},
\ee
and for $1 \leq p \leq \infty$
\be\lb{c5}
\left\|\int_{-\infty}^t \cos((t-s)\sqrt H) P_c F(s) \dd s\right\|_{(1+\log_+|x|)L^\infty_x L^p_t} \les \|\nabla F\|_{L^1_x L^p_t}.
\ee
Furthermore, for $1 \leq q_1, q_2, r_1, r_2, \sigma, \tilde \sigma \leq \infty$, one has that
\be\lb{c6}
\left\|\int_{-\infty}^t \frac {\sin((t-s)\sqrt H) P_c}{\sqrt H} F(x, s) \dd s\right\|_{L^{q_1, \sigma}_xL^{r_1, \tilde \sigma}_t} \les \|F\|_{L^{q_2, \sigma}_x L^{r_2, \tilde \sigma}_s},
\ee
\be\lb{c7}
\left\|\int_{-\infty}^t \cos((t-s)\sqrt H) P_c F(x, s) \dd s\right\|_{L^{q_1, \sigma}_xL^{r_1, \tilde \sigma}_t} \les \|\nabla F\|_{L^{q_2, \sigma}_x L^{r_2, \tilde \sigma}_s},
\ee
where $\frac 2 {q_1} + \frac 1 {r_1} + 2 = \frac 2 {q_2} + \frac 1 {r_2}$ and $0 < \frac 1 {r_2} - \frac 1 {r_1} \leq \frac 1 2$. When $r_1=\infty$ $L^{r_1, \tilde \sigma} = L^\infty$. When $r_2=1$ $L^{r_2, \tilde \sigma} = L^1$. When $r_1=\infty$, $r_2=2$  then $L^{r_1, \tilde \sigma}=L^\infty$ and $L^{r_2, \tilde \sigma}=L^{2, 1}$. When $r_1 =2$, $r_2=1$ then $L^{r_1, \tilde \sigma}=L^{2, \infty}$ and $L^{r_2, \tilde \sigma} = L^1$. When $q_1=\infty$ $L^{q_1, \sigma}=L^\infty$ and $L^{q_2, \sigma}=L^{q_2, 1}$ and when $q_2=1$ $L^{q_1, \sigma}=L^{q_1, \infty}$ and $L^{q_2, \sigma}=L^1$.

Next,
\be\lb{c8}
\left\|\frac{\sin(t\sqrt H)P_c}{\sqrt H} f\right\|_{((1+\log_+|x|)L^\infty_x+\mc K_1^*) L^\infty_t} \les \|f\|_{L^2_x}
\ee
and
\footnote{Note that the following more refined estimate is also true, but too cumbersome to prove in this paper: $\left\|\frac{\sin(t\sqrt H)P_c}{\sqrt H} f\right\|_{(L^\infty_x+\mc K_1^*) L^\infty_t([0, 1])} + \left\|\frac{\sin(t\sqrt H)P_c}{\sqrt H} f\right\|_{(1+\log_+|x|)L^\infty_x L^\infty_t([1, \infty))} \les \|f\|_{L^2_x}$. The same applies to the subsequent cosine estimate (\ref{c9}).
}
\be\lb{c9}
\|\cos(t\sqrt H)P_c f\|_{((1+\log_+|x|)L^\infty_x+\mc K_1^*) L^\infty_t} \les \|f\|_{H^1_x}.
\ee
The dual estimates are also true.

Moreover, assume $V \in L^q_x$. For $\frac 1 4<s<1$ and $\frac 2 q + \frac 1 r = 1-s$, $8 \leq q < \infty$ (and including $L^{q, 2}=L^\infty$ in the case of the free Laplacian, but excluding $(q, r)=(\infty, 2)$), $2 \leq r \leq \infty$ (where $L^{r, 2} = L^\infty$ when $r=\infty$), excluding the endpoint $(q, r, s) = (8, \infty, \frac 3 4)$, we have
\be\lb{c11}
\left\|\frac{\sin(t\sqrt H)P_c}{\sqrt H} f\right\|_{L^{q, 2}_x L^{r, 2}_t} \les \||H|^{\frac{s-1} 2} f\|_{L^2_x}
\ee
(as per Lemma \ref{norm_equivalence}, this is dominated by $\|f\|_{\dot H^{s-1+}_x\cap\dot H^{s-1-}_x}$) and
\be\lb{c12}
\|\cos(t\sqrt H)P_c f\|_{L^{q, 2}_x L^{r, 2}_t} \les \||H|^{s/2} f\|_{L^2_x} \les \|f\|_{H^s_x}.
\ee
With $q'$ and $r'$ dual exponents to the ones above we have by duality
\be\lb{c13}
\left\||H|^{\frac{1-s}2} \int_{-\infty}^\infty \frac {\sin(t\sqrt H)P_c}{\sqrt H} F(t)\right\|_{L^2_x} \les \|F\|_{L^{q', 2}_x L^{r', 2}_t}
\ee
(the left-hand side norm dominates $\|\cdot\|_{\dot H^{1-s+}_x+\dot H^{1-s-}_x}$) and
\be\lb{c14}
\left\|\int_{-\infty}^\infty \cos(t\sqrt H)P_c F(t) \dd t\right\|_{H^{-s}_x} \les \|F\|_{L^{q', 2}_x L^{r', 2}_t}.
\ee
\end{theorem}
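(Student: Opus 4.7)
The plan is to reduce every estimate to a corresponding free-resolvent estimate via the symmetric resolvent identity $R_V(\lambda^2) = R_0(\lambda^2) - R_0(\lambda^2) V R_V(\lambda^2)$, handle the free kernel and the perturbation term separately, and then use real interpolation together with a $TT^*$ duality to pass from kernel bounds to Strichartz-type statements. The hypothesis that $0$ is a regular point of $H$ is used to guarantee that $(I + V R_0(\lambda^2))^{-1}$ exists and is bounded uniformly on a suitable weighted/Kato-type space as $\lambda \to 0^+$; this is the nontrivial analytic input, and the logarithmic moment $(1+\log_+|x|)^2 V \in L^1_x$ together with the $L^q$ tail hypothesis is exactly what is needed to absorb the $\log\lambda$ singularity of $R_0$.

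\textbf{Pointwise and Duhamel estimates (\ref{c1})--(\ref{c5}).} Starting from the spectral representation
$$
\frac{\sin(t\sqrt H) P_c}{\sqrt H}(x, y) = \frac{2}{\pi}\int_0^\infty \sin(t\lambda) \im R_V(\lambda^2 + i0)(x, y) \dd \lambda,
$$
I would split $t \geq 1$ from $t \leq 1$ and exploit the explicit free kernel $\frac{\sin(t\sqrt{-\Delta})}{\sqrt{-\Delta}}(x, y) = \frac{1}{2\pi}\frac{\one_{t > |x-y|}}{\sqrt{t^2 - |x-y|^2}}$. Integrating the free kernel in $t$ over $[1, \infty)$ produces a factor bounded by $1 + \log_+|x-y|$; applying the resolvent identity once and pairing with $V$ against this bound, using the logarithmic moment hypothesis, yields the symmetric weight $(1+\log_+|x|)(1+\log_+|y|)$ in (\ref{c1}). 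For $t \leq 1$ the local $L^1_t$ norm of the free kernel is $\les \log\langle|x-y|^{-1}\rangle$, which lies in $\mc K_1$ by definition, producing (\ref{c2}); together with the identity $\nabla_x \cos(t\sqrt H) = -\sqrt H \sin(t\sqrt H) \nabla_x /\sqrt H + \text{l.o.t.}$ one obtains (\ref{c40}) and (\ref{c4}). Duhamel integration in $s$ and duality supply (\ref{c3}) and (\ref{c5}).

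\textbf{Strichartz-type estimates (\ref{c6})--(\ref{c14}).} Estimates (\ref{c6})--(\ref{c7}) follow from real Lorentz interpolation between the endpoint bounds (\ref{c3}), (\ref{c5}) (which correspond to $(q_2, r_2) = (1, 1)$ up to logarithmic weights) and the trivial energy identity at the opposite corner; the 2D Keel--Tao scaling $\frac 2 {q_1} + \frac 1 {r_1} + 2 = \frac 2 {q_2} + \frac 1 {r_2}$ then pins down the admissible rectangle. For (\ref{c8})--(\ref{c9}) I would apply the $TT^*$ principle to the bilinear forms underlying (\ref{c3}), (\ref{c5}): since $L^\infty_x + \mc K_1^*$ is dual to $L^1_x \cap \mc K_1$, the estimate $L^1\cap\mc K_1 \to $ weighted $L^\infty$ squares into $L^2 \to ((1+\log_+|x|)L^\infty_x + \mc K_1^*) L^\infty_t$. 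Finally (\ref{c11})--(\ref{c14}) are ordinary Strichartz estimates in the 2D wave range, which I would obtain by combining the free Keel--Tao estimates with the resolvent identity once the $L^q$ hypothesis on $V$ is used to verify the required mapping properties on the continuous spectrum; (\ref{c13})--(\ref{c14}) then follow by duality.

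\textbf{Main obstacle.} The principal technical difficulty is the two-dimensional low-energy analysis: because $R_0(\lambda^2)$ is logarithmically divergent at $\lambda = 0$, the operator $I + V R_0(\lambda^2)$ is not invertible by a naive Neumann series, and one must use the ``regular point'' hypothesis to extract an asymptotic expansion whose leading correction term is controlled exactly by the logarithmic moment on $V$. All subsequent bounds inherit the sharp weights $(1+\log_+|x|)$ or $(1+\log_+|y|)$, so one cannot afford to lose any power of the logarithm when bounding the correction term $R_0 V (I + R_0 V)^{-1} R_0$; keeping this bookkeeping optimal, uniformly in $\lambda$, is where the bulk of the technical work will lie.
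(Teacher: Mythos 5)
Your plan's central reduction for (\ref{c1}) does not work: the free kernel $\frac 1{2\pi}\one_{t>|x-y|}(t^2-|x-y|^2)^{-1/2}$ is \emph{not} integrable in $t$ on $[1,\infty)$ --- it has a $t^{-1}$ tail, so $\int_1^\infty$ of it diverges logarithmically for every $x,y$, and your claim that this integral is bounded by $1+\log_+|x-y|$ is false. This divergence is exactly the zero-energy resonance (the constant function) of the free Laplacian, and indeed (\ref{c1}), (\ref{c3}), (\ref{c8}), (\ref{c9}) are false for $V=0$; consequently no argument that estimates the free part separately and then adds a perturbative correction can prove them. The paper instead works with the symmetric resolvent identity (\ref{sym_id}) and shows, through the Feshbach-formula low-energy expansion (\ref{ut}) of $(U+T(\lambda))^{-1}$ (valid under the regular-point hypothesis), that the Duhamel term produces exactly the scalar singular factor $h(\lambda)(\frac i4\sgn\lambda-\frac 1{2\pi}\log|\lambda|)\,1\otimes 1$, which \emph{cancels} the identical non-integrable rank-one tail isolated in the decomposition of $R_0$ in Lemma \ref{free_resolvent}; only after this cancellation does the remainder lie in $\U_{(1+\log_+|x|)^{-1}L^1_x,(1+\log_+|x|)L^\infty_x}$, which is what gives (\ref{c1}) and (\ref{c3}). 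Your ``main obstacle'' paragraph gestures at the low-energy expansion, but the decisive point is this cancellation between the free tail and the correction term, and your explicit treatment of the free part contradicts it. (The cosine estimates (\ref{c40})--(\ref{c5}) are different: there the free $L^\infty_x L^1_t$ bound does hold, via the representation (\ref{est_cos}), not via commuting $\nabla$ with functions of $H$, which is not legitimate for variable $V$.)

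Two further steps do not go through as stated. First, (\ref{c8})--(\ref{c9}) do not follow by ``squaring'' (\ref{c3})/(\ref{c5}): the $TT^*$ operator associated with $f\mapsto \frac{\sin(t\sqrt H)P_c}{\sqrt H}f$ is $\frac 12\big(\frac{\cos((t-s)\sqrt H)P_c}H-\frac{\cos((t+s)\sqrt H)P_c}H\big)$ (see (\ref{sincos})), which involves $H^{-1}$ and is not controlled by (\ref{c3}); the paper must separately analyze $\frac{\cos(t\sqrt{-\Delta})}{-\Delta}$, isolate its $\log_+t$ rank-one growth, and use the structure (\ref{r0v'}) (the factor $I-\frac{1\otimes|V|}{\|V\|_{L^1_x}}$ annihilates constants) to see that this growth only meets the term carrying $(\frac i4\sgn\lambda-\frac 1{2\pi}\log|\lambda|)^{-1}$, whose convolution with $\log_+ t$ is bounded. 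Second, (\ref{c11})--(\ref{c12}) are \emph{reversed} Strichartz estimates in $L^{q,2}_xL^{r,2}_t$ (space outside, time inside, with the range $q\ge 8$), not Keel--Tao estimates, so they cannot be imported from the free ordinary Strichartz theory; the paper proves them through Lemma \ref{lema}, i.e.\ stationary-phase bounds (\ref{alternative}) on the kernel $M(t)$ together with Lorentz-space Young inequalities and then $TT^*$. Likewise (\ref{c6})--(\ref{c7}) are obtained by direct Young/O'Neil convolution bounds on the explicit free kernel plus the factorization of the perturbation through $\U_2$, not by interpolating the log-weighted endpoint (\ref{c3}) with the energy identity; such an interpolation would not yield the unweighted Lorentz rectangle with the constraint $0<\frac 1{r_2}-\frac 1{r_1}\le\frac 12$.
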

Note that by \cite{oh} the following endpoint inequality is true:
$$
\|e^{it\sqrt{-\Delta}} f\|_{L^\infty_x L^2_t} \les \|f\|_{\dot B^{1/2}_{2, 1}}.
$$

\begin{observation} The condition $V \in L^q_{loc}$, $\lim_{R \to \infty} R^{1-1/q} \|V\|_{L^q(|x|\in [R, 2R])}=0$, can be replaced by requiring that $V \in \mc K_1$ and
$$
\lim_{\epsilon \to 0} \sup_y \int_{|x-y|<\epsilon} |V(x)| \log_-|x-y| \dd x = 0,
$$
provided that a spectral condition is satisfied: $U+T(\lambda)$ is invertible in $L^2$ for all $\lambda>0$ (see Lemma \ref{invertibility}).

Note that this allows for potentials that are measures with singular support. For an example of dispersive estimates with such potentials (in dimension three), see \cite{goldberg}.
\end{observation}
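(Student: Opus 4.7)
The plan is to revisit the proof of Theorem \ref{dispersive} and verify that the stronger local integrability/decay hypothesis on $V$ enters only at specific points, each of which can be handled by the proposed substitutes: Simon's self-adjointness criterion quoted on p.~1, the Kato-class bound $V \in \mc K_1$, and the postulated invertibility of $U+T(\lambda)$ on $L^2$ for every $\lambda>0$. In the original proof, the hypothesis $V \in L^q_{loc}$ with $R^{1-1/q}\|V\|_{L^q(|x|\in[R,2R])} \to 0$ plays two distinct roles: first, it renders the Birman--Schwinger-type operator $T(\lambda) = |V|^{1/2} R_0(\lambda^2)|V|^{1/2}$ compact on $L^2$ (via a Rellich--Kondrachov argument combined with the decay at infinity), so that regularity of $0$ combined with the Fredholm alternative yields invertibility of $U+T(\lambda)$ for all $\lambda>0$ (this is Lemma \ref{invertibility}); second, it appears in a handful of local $L^p$ estimates inside the resolvent expansions.

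Under the substitute hypotheses, each of these roles is accounted for. The existence of a self-adjoint realization of $H=-\Delta+V$ follows from the result of \cite{simon} quoted at the outset, whose assumption is precisely the logarithmic smallness condition now imposed. The Kato-class assumption $V \in \mc K_1$ directly supplies the pointwise/Kato-class bounds on the free resolvent kernel composed with $V$ that appear in the derivation of (\ref{c1})--(\ref{c14}), while the retained weighted condition $(1+\log_+|x|)^2 V \in L^1$ provides the decay at infinity needed for the low-frequency analysis and for the weighted pointwise estimates. The logarithmic local smallness condition is exactly what permits a splitting $V = V_1 + V_2$ with $V_1$ bounded and $V_2$ of arbitrarily small $\mc K_1$-norm near its singular set, which replaces local $L^q$ arguments in the expansions: every estimate originally of the form $\|Vg\|_{L^p} \les \|V\|_{L^q}\|g\|_{L^{p'}}$ can be recast as $\|Vg\|_{\mc K_1} \les \|V\|_{\mc K_1}\|g\|_{\mc K_1^*}$ (or a weighted variant), which uses only the hypotheses now available.

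The one step that genuinely requires the stronger assumption is Lemma \ref{invertibility}, whose Fredholm-alternative argument needs compactness of $T(\lambda)$ on $L^2$, and hence the $L^q_{loc}$ hypothesis. That step is bypassed by adopting $U+T(\lambda)$ invertible in $L^2$ for all $\lambda>0$ as a hypothesis, after which the rest of the proof of Theorem \ref{dispersive} proceeds verbatim, since from that point onward only the invertibility (not the compactness) is used. The main obstacle is the bookkeeping described in the preceding paragraph: one must audit each resolvent expansion in the proof and confirm that every intermediate bound on $V$ or on operators built from $V$ reduces to a consequence of $V \in \mc K_1$, $(1+\log_+|x|)^2 V \in L^1$, or local logarithmic smallness. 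Once this substitution is carried out everywhere the original proof invokes the stronger hypothesis, the Remark follows.
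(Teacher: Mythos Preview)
The paper offers no proof of this Remark; it is stated as an observation without justification, so there is no argument to compare against. Your justification is correct in substance: the hypothesis $V \in L^q_{loc}$ with $R^{1-1/q}\|V\|_{L^q(|x|\in[R,2R])}\to 0$ enters the proof of Theorem~\ref{dispersive} solely through Lemma~\ref{invertibility}, where it is used (i) to make $T(\lambda)U$ Hilbert--Schmidt and (ii) to bootstrap $g$ into $H^1_{loc}$ so that the Ionescu--Jerison/Koch--Tataru results on absence of embedded eigenvalues apply. Once the conclusion of that lemma is taken as a hypothesis, Lemma~\ref{technical_lemma} and the remainder of the proof use only $V \in \mc K_1$ and $(1+\log_+|x|)^2 V \in L^1_x$, as the paper itself remarks in the Notations section (``At a minimum, assuming that $V \in \mc K_1$\ldots'').

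Your account of a ``second role'' for local $L^p$ estimates inside the resolvent expansions is overstated: no such bounds appear in the proofs of Lemma~\ref{free_resolvent} or Lemma~\ref{technical_lemma}, so the audit you describe would find nothing to replace. Likewise, the logarithmic local smallness condition is needed only for Simon's self-adjointness criterion quoted at the start of the paper; it has no separate analytical role in the expansions and you need not invoke it to justify splittings $V=V_1+V_2$. These are inefficiencies rather than gaps---your conclusion stands.
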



In addition to the reversed Strichartz estimates of Theorem \ref{dispersive}, we can easily prove several direct Strichartz estimates.

\begin{proposition} Consider a potential $V$ such that $(1+\log_+|x|)^2 V \in L^1_x$, for some $q \in (1, \infty]$ $V \in L^q_{loc}$ and $\lim_{R \to \infty} R^{1-1/q} \|V\|_{L^q(|x|\in [R, 2R])}=0$, and $0$ is a regular point of the spectrum of $H=-\Delta+V$. Then for $(\frac 1 q, \frac 1 r)$ contained in the triangle with vertices $(\frac 1 2, 0)$, $(\frac 1 8, \frac 1 8)$, and $(0, 0)$, excluding the endpoint $(0, 0)$, and for $\frac 2 q  + \frac 1 r = 1-s$
$$
\left\|\frac{\sin(t\sqrt H)P_c}{\sqrt H} f \right\|_{L^r_t L^q_x} \les \|H^{\frac{1-s} 2} f\|_{L^2_x}
$$
(as per Lemma \ref{norm_equivalence}, this is dominated by $\|f\|_{\dot H^{s-1+}_x\cap \dot H^{s-1-}_x}$) and
$$
\|\cos(t\sqrt H)P_c f\|_{L^r_t L^q_x} \les \|H^{s/2} f\|_{L^2_x} \les \|f\|_{H^s_x}.
$$
\end{proposition}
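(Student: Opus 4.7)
The plan is to combine three ingredients: (i) a conversion of the reversed Strichartz estimate (\ref{c11}) into a direct one via Lorentz embedding and Minkowski's inequality, covering the left sub-triangle; (ii) an energy-plus-Sobolev bound along the edge $r=\infty$; and (iii) complex interpolation to fill the remainder. The cosine estimate is obtained by the same three-step scheme, using (\ref{c12}) in place of (\ref{c11}).

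For (i), setting $S := \sin(t\sqrt H)P_c/\sqrt H$, inequality (\ref{c11}) gives $\|Sf\|_{L^{q,2}_x L^{r,2}_t} \les \||H|^{(s-1)/2}f\|_{L^2_x}$ on the rectangle $(1/q,1/r) \in [0,1/8] \times [0,1/2]$ with $\frac{2}{q}+\frac{1}{r} = 1-s$ (where the exponent on the right is read as $|H|^{(s-1)/2}$ to match the $\dot H^{s-1}$ description in the statement). The Lorentz embedding $L^{p,2} \hookrightarrow L^p$ for $p \geq 2$ replaces the Lorentz norms by Lebesgue ones, and Minkowski's integral inequality $L^q_xL^r_t \hookrightarrow L^r_tL^q_x$, valid whenever $q \leq r$ and thus at every vertex $(1/2,0),(1/8,1/8),(0,0)$ (hence throughout the proposition's triangle by convexity), produces the direct Strichartz bound. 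This settles the sub-triangle with vertices $(0,0),(1/8,0),(1/8,1/8)$.

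For (ii), along the edge $r = \infty$, with $q \in [2,\infty)$ and $s = 1 - 2/q$, spectral calculus yields $\|Sf(t)\|_{\dot H^s_x} = \|\sin(t\sqrt H)|H|^{(s-1)/2}P_c f\|_{L^2_x} \leq \||H|^{(s-1)/2}P_c f\|_{L^2_x}$ uniformly in $t$; by Lemma \ref{norm_equivalence} the right-hand side is equivalent to $\|f\|_{\dot H^{s-1}_x}$. The $\R^2$ Sobolev embedding $\dot H^s \hookrightarrow L^q$ with $\frac{1}{q} = \frac{1}{2}-\frac{s}{2}$, valid for $s \in [0,1)$ and hence $q \in [2,\infty)$, then produces the desired $L^\infty_tL^q_x$ bound. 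This covers the edge from $(1/2,0)$ to $(0,0)$, with the vertex $(0,0)$ excluded because $\dot H^1 \not\hookrightarrow L^\infty$ in $\R^2$.

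For (iii), the remaining sub-triangle with vertices $(1/8,0),(1/2,0),(1/8,1/8)$ is filled in by complex interpolation. Stein's theorem applied to the analytic family $T_z := S \circ |H|^{z/2}$, analytic in $z \in \mathbb{C}$ via the functional calculus on $L^2$, interpolates between a base point on the $r=\infty$ edge (from (ii), with parameter $q_0 \in [2,8]$) and the vertex $(1/8,1/8)$ (from (i)); as $q_0$ varies, the resulting line segments sweep out the sub-triangle (one checks that $(a-b)/(1-8b) \in [1/8,1/2]$ for all $(a,b)$ in the sub-triangle). The main technical point is that complex interpolation must simultaneously move both the target mixed norm $L^r_tL^q_x$ and the source Sobolev exponent; this is handled by the standard Calderón identification $[L^{p_0}_tL^{q_0}_x, L^{p_1}_tL^{q_1}_x]_\theta = L^{p_\theta}_tL^{q_\theta}_x$ combined with analyticity of $T_z$.
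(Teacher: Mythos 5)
Your proposal is correct and is essentially the paper's own (very terse) proof written out in detail: the paper likewise gets the side $(\frac 1 q, 0)$ from the energy estimate plus Sobolev embedding, the diagonal side from the reversed estimates (\ref{c11})--(\ref{c12}) of Theorem \ref{dispersive} (on the diagonal $L^q_xL^q_t=L^q_tL^q_x$, so your Lorentz-plus-Minkowski step merely extends this to the rest of the region $q\le r$), and the interior by interpolation, which your Stein family $T_z=S|H|^{z/2}$, with $|H|^{i\tau/2}$ unitary on $L^2$, makes precise. The one soft spot in your step (ii) --- treating $\|\cdot\|_{\dot H^s_x}$ as interchangeable with $\||H|^{s/2}\cdot\|_{L^2_x}$, which Lemma \ref{norm_equivalence} only provides up to the $\pm\epsilon$ losses --- is exactly the point the paper itself glosses over with the phrase ``Sobolev embeddings,'' so it is not a gap relative to the paper's argument.
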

\begin{proof}
Indeed, the side $(\frac 1 q, 0)$, $2 \leq q < \infty$ corresponds to Sobolev embeddings, while the side $(\frac 1 q, \frac 1 q)$, $8 \leq q < \infty$, corresponds to inequalities proved in Theorem \ref{dispersive}. Estimates inside the triangle follow by interpolation. 
\end{proof}

Next, we state the pointwise decay estimates. Such estimates can presumably be used to prove further decay estimates, such as Strichartz estimates.

We first define the Kato-type spaces of potentials
$$
\|V\|_{\tilde {\mc K}_{1/2}} := \sup_y \int_{|x-y| \leq 1} \frac {|V(x)|\dd x}{|x-y|^{1/2}},\ \|V\|_{\tilde {\mc K}} := \sup_y \int_{|x-y| \leq 1} \frac {|V(x)| \log|x-y| \dd x}{|x-y|^{1/2}}.
$$
\begin{theorem}\lb{decay_estimate} Assume that $(1+\log_+|x|)^3 V \in L^1_x$, for some $q \in (1, \infty]$ $V \in L^q_{loc}$ and $\lim_{R \to \infty} R^{1-1/q} \|V\|_{L^q(|x|\in [R, 2R])}=0$, $0$ is a regular point of the spectrum of $H=-\Delta+V$, and in addition that $(1+\log_+|x|) V \in \tilde {\mc K}_{1/2}$, $V \in \tilde {\mc K}$. Then
\be\lb{H1}
\|\cos(t\sqrt H) P_c \langle H \rangle^{-3/4} f\|_{L^\infty_x} + \|\frac{\sin(t\sqrt H) P_c}{\sqrt H} \langle H \rangle^{-1/4} f\|_{L^\infty_x} \les \|f\|_{\mc H^1_x},
\ee
where $\mc H^1_x$ is the Hardy space.

Furthermore,
\be\lb{L^1}\begin{aligned}
\|\cos(t\sqrt H) P_c f\|_{L^\infty_x} &\les \|\langle H \rangle^{3/4+} f\|_{L^1_x} \les \|f\|_{W^{3/2+, 1}_x},\\
\left\|\frac{\sin(t\sqrt H) P_c}{\sqrt H} f\right\|_{L^\infty_x} &\les \|\langle H \rangle^{1/4+}f\|_{L^1_x} \les \|f\|_{W^{1/2+, 1}_x}.
\end{aligned}\ee
\end{theorem}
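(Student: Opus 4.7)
The plan is to begin from Stone's formula,
$$
\cos(t\sqrt H)P_c = \tfrac{2}{\pi}\int_0^\infty \lambda\cos(t\lambda)\,\im R_V(\lambda^2+i0)\dd\lambda,\quad \frac{\sin(t\sqrt H)}{\sqrt H}P_c = \tfrac{2}{\pi}\int_0^\infty \sin(t\lambda)\,\im R_V(\lambda^2+i0)\dd\lambda,
$$
and to expand $R_V$ by the symmetric resolvent identity $R_V(\lambda^2)=R_0(\lambda^2)-R_0(\lambda^2)\,v\,M(\lambda)^{-1}\,v\,R_0(\lambda^2)$, where $v=|V|^{1/2}$, $U=\sgn V$, and $M(\lambda)=U+v R_0(\lambda^2)v$. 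This splits each propagator into a \emph{free part} (involving only $R_0$) and a \emph{correction part}. Under the regular-point hypothesis, the invertibility lemma invoked in the proof of Theorem \ref{dispersive} furnishes $M(\lambda)^{-1}$ as a bounded operator on $L^2$ uniformly in $\lambda>0$, together with a controlled low-frequency expansion of the form $M(\lambda)^{-1}=M(0)^{-1}+o(1)$ as $\lambda\to 0^+$.

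\textbf{Free part.} The kernel of $\frac{\sin(t\sqrt{-\Delta})}{\sqrt{-\Delta}}$ is the classical 2D wave kernel $\frac{1}{2\pi}\one_{\{|x-y|<t\}}(t^2-|x-y|^2)^{-1/2}$, and $\cos(t\sqrt{-\Delta})$ is its $t$-derivative. The pointwise bounds on the free operators follow from a Littlewood--Paley decomposition together with the standard frequency-localized 2D wave dispersive estimate
$$
\|P_\lambda e^{\pm it\sqrt{-\Delta}}\|_{L^1\to L^\infty}\les \lambda^{3/2}(1+\lambda|t|)^{-1/2}.
$$
Inserting $\langle H\rangle^{-3/4}$ (resp.\ $\langle H\rangle^{-1/4}$) compensates the $\lambda^{3/2}$ (resp.\ $\lambda^{1/2}$) growth across dyadic frequency shells. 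The Hardy-space bound \ref{H1} is then proved by testing on an $\mc H^1$-atom $a$ supported in a ball $B(x_0,r)$ with $\int a=0$: the vanishing moment produces a gain of $O(\lambda r)$ at low frequencies, which absorbs the logarithmic singularity of the two-dimensional free Green's function at $\lambda=0$. For the $L^1_x$ estimate \ref{L^1} the extra $\langle H\rangle^{\epsilon}$ loss plays the same role.

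\textbf{Correction part.} For the correction I would use the pointwise asymptotics of $R_0(\lambda^2)(x,y)$ (the Hankel-function kernel): $\frac{1}{2\pi}\log(\lambda|x-y|)+O(1)$ as $\lambda|x-y|\to 0$, and $(8\pi\lambda|x-y|)^{-1/2}e^{\pm i\lambda|x-y|}(1+O((\lambda|x-y|)^{-1}))$ at infinity, together with the fact that $\im R_0(\lambda^2+i0)(x,y)=\tfrac14 J_0(\lambda|x-y|)$ is entire in $\lambda$. Substituting these into the spectral integral and invoking the uniform $L^2$-boundedness and low-frequency expansion of $M(\lambda)^{-1}$ reduces the estimate to pointwise bounds on convolutions of $V$ against kernels of the form $\log|x-y|$, $|x-y|^{-1/2}$, and products thereof. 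The hypotheses $(1+\log_+|x|)^3 V\in L^1$, $(1+\log_+|x|)V\in\tilde{\mc K}_{1/2}$, $V\in\tilde{\mc K}$ are exactly what is needed to control these convolutions uniformly in $(x,y)$.

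\textbf{Main obstacle.} The principal technical difficulty lies in the low-frequency regime in dimension two: the logarithmic singularity of $R_0(\lambda^2)$ at $\lambda=0$ would produce, naively, a divergent pointwise bound on the kernel of $\im R_V(\lambda^2+i0)$. One must extract the cancellations encoded in the regular-point hypothesis (phrased through the invertibility of $U+T(\lambda)$ from Lemma \ref{invertibility}) and carry them explicitly through the $x,y$-dependence of the spectral integral, rather than only as an operator identity on $L^2$. The weighted Kato-type norms $\tilde{\mc K}_{1/2}$, $\tilde{\mc K}$ and the $(1+\log_+|x|)^3$ weight on $V$ are precisely tailored to absorb the residual logarithmic factors produced by the Hankel asymptotics and close the $\dd\lambda$ integration uniformly in the space variables.
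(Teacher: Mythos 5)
Your proposal shares the paper's skeleton (Stone's formula, the symmetric resolvent identity, a free/perturbed splitting, and the Kato-type hypotheses), but there is a genuine gap at its center: you never explain how the factor $t^{-1/2}$ is extracted from the spectral integral for the correction term. Pointwise-in-$\lambda$ bounds on the resolvent kernel (Hankel asymptotics together with an asserted uniform $L^2$ bound on $M(\lambda)^{-1}$) cannot produce decay in $t$ of the oscillatory integral $\int e^{\pm it\lambda}(\cdots)\dd\lambda$; one needs quantitative regularity or oscillation in $\lambda$, uniformly in $(x,y)$. This is exactly what the paper's proof supplies and your sketch omits: all kernels are controlled through their time Fourier transforms in the algebras $\U_1$, $\mc V_1$ (and weighted variants $\tilde\U_1$, $\tilde{\mc V}_1$), with the two norms $L^1_t$ and $t^{-1/2}L^\infty_t$ and the product estimate (\ref{subalgebra'})/(\ref{subalgebra}); high energies are handled by a Born series whose terms are convolutions of the kernels $\one_{t\ge r}(t^2-r^2)^{-1/2}$ (this is where $V\in\tilde{\mc K}$ and $\tilde{\mc K}_{1/2}$ actually enter, to sum the series); medium energies by a Neumann expansion around $\lambda_0$; and low energies by the Feshbach formula in weighted $L^1$. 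Moreover, your stated low-frequency input $M(\lambda)^{-1}=M(0)^{-1}+o(1)$ is false in dimension two: $T(\lambda)=vR_0((\lambda+i0)^2)v$ contains the divergent rank-one piece $h(\lambda)(\frac i4\sgn\lambda-\frac1{2\pi}\log|\lambda|)\,v\otimes v$, so $M(\lambda)$ does not converge to $U+vG_0v$, and the expansion of $(U+T(\lambda))^{-1}$ necessarily involves scalar factors such as $(\frac i4\sgn\lambda-\frac1{2\pi}\log|\lambda|)^{-n}$ whose time Fourier transforms must be estimated by stationary phase (the analogue of (\ref{L1})--(\ref{log}) and its $t^{-1/2}(1+\log_+|t|)^{-2}$ refinement). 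Likewise, Lemma \ref{invertibility} gives invertibility only pointwise in $\lambda$; uniformity at high energy requires the smallness coming from the frequency cutoff in the Born series, not the lemma itself. Your "main obstacle" paragraph names this difficulty but does not resolve it, so the correction part of the argument is not actually carried out.

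Two further points. First, your frequency-localized free dispersive bound should read $\|P_\lambda e^{\pm it\sqrt{-\Delta}}\|_{L^1\to L^\infty}\les \lambda^2(1+\lambda|t|)^{-1/2}$ (i.e. $\lambda^{3/2}|t|^{-1/2}$ for $\lambda|t|\gtrsim1$), not $\lambda^{3/2}(1+\lambda|t|)^{-1/2}$; with the correct bound the dyadic sum against $\langle\lambda\rangle^{-3/2}$ is exactly borderline, which is why the endpoint needs $\mc H^1$ (or the $+$ in (\ref{L^1})) --- the paper avoids re-proving this by quoting the free-case result of Beals and instead shows that the \emph{difference} of the perturbed and free evolutions is $L^1_x\to L^\infty_x$ with $t^{-1/2}$ decay, with no smoothing weights at all. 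Second, to pass from free-Laplacian weights to $\langle H\rangle$-weights in (\ref{L^1}) one needs the $L^1$-boundedness of $\langle H\rangle^\alpha\langle\Delta\rangle^{-\beta}$ for $\alpha<\beta$ (the paper's Appendix C); your proposal does not address this compatibility step.
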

The Kato-type conditions on $V$ are satisfied when $(1+\log_+|x|) V \in L^{4/3}_x$ and $V \in L^{4/3+}_x$.

Under slightly more restrictive conditions on the potential $V$ we have the following refinement of several results of Theorem \ref{dispersive}, which are also almost scaling-invariant.

\begin{proposition}\lb{other_estimates} Assume that $(1+\log_+|x|)^4 V \in L^1_x$, for some $q \in (1, \infty]$ $V \in L^q_{loc}$ and $\lim_{R \to \infty} R^{1-1/q} \|V\|_{L^q(|x|\in [R, 2R])}=0$, and $0$ is a regular point of the spectrum of $H=-\Delta+V$. Then for $t \geq 1$
\be\lb{d1}
\int_t^\infty \left|\frac {\sin(\tau\sqrt H) P_c}{\sqrt H} \right|(x, y) \dd \tau \les (1+\log_+t)^{-1} (1+\log_+|x|)^2 (1+\log_+|y|)^2,
\ee
\be\lb{d2}
(1+\log_+|x|)^{-2}|\cos(t\sqrt H) P_c f|(x) \les (1+\log_+t)^{-1} \|(1+\log_+|x|)^2(-\Delta f)\|_{L^1_x}.
\ee
Also, for $t\geq 0$
\be\lb{d3}
(1+\log_+|x|)^{-2} \int_t^\infty |\cos(\tau\sqrt H) P_c f|(x) \dd \tau \les (1+\log_+t)^{-1} \|(1+\log_+|x|)\nabla f\|_{L^1_x},
\ee
\be\lb{d4}
(1+\log_+|x|)^{-2} \left|\frac{\sin(t\sqrt H)P_c}{\sqrt H} f\right|(x) \les (1+\log_+t)^{-1} \|(1+\log_+|x|)\nabla f\|_{L^1_x}.
\ee
\end{proposition}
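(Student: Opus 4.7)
The plan is to refine the analysis behind Theorem~\ref{dispersive}, using the strengthened logarithmic weight on $V$ to extract an additional factor of $(1+\log_+t)^{-1}$. The main estimate is~(\ref{d1}); the remaining three follow from it by functional-calculus identities and by a parallel argument for the cosine propagator.

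For~(\ref{d1}) I would start from Stone's formula
$$
\frac{\sin(\tau\sqrt H)\,P_c}{\sqrt H}(x,y) = \frac{2}{\pi}\int_0^\infty \sin(\tau\lambda)\,\im\bigl[R_V^+(\lambda^2)\bigr](x,y)\,\dd\lambda,
$$
where $R_V^+(\mu)=(H-\mu-i0)^{-1}$, and iterate the resolvent identity $R_V = R_0 - R_0 V R_V$ one step further than in the proof of~(\ref{c1}). In two dimensions $R_0(\lambda^2)(x,y)$ carries both a $\log(1/\lambda)$ singularity at $\lambda=0$ and logarithmic growth in $|x-y|$, so each additional factor of $R_0 V$ in the Born expansion costs one logarithmic power in the spatial weight while producing one more factor of $\log^{-1}(1/\lambda)$ in the spectral variable. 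The strengthened hypothesis $(1+\log_+|x|)^4 V\in L^1$ is exactly what is needed to carry out this extra Born step and upgrade the low-energy behaviour of $\im R_V^+(\lambda^2)(x,y)$ from $\log^{-1}(1/\lambda)$ (the rate that underlies (\ref{c1})) to $\log^{-2}(1/\lambda)$, with the spatial growth absorbed into $(1+\log_+|x|)^2(1+\log_+|y|)^2$. A single integration by parts in $\lambda$ against the oscillating $\sin(\tau\lambda)$ then yields the pointwise kernel bound
$$
\bigl|\bigl[\sin(\tau\sqrt H)P_c/\sqrt H\bigr](x,y)\bigr| \les \frac{(1+\log_+|x|)^2(1+\log_+|y|)^2}{\tau\,\log^2\tau},\qquad \tau\ge 2,
$$
since the oscillatory integral localizes at $\lambda\sim 1/\tau$, where $\log^{-2}(1/\lambda)\sim\log^{-2}\tau$. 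Integrating from $t$ to $\infty$ (and treating the high-frequency part $\lambda\ge 1$ separately, where polynomial $\tau$-decay is available via standard stationary phase) gives~(\ref{d1}).

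The remaining three inequalities are derived from this pointwise variant. For~(\ref{d4}), on the range of $P_c$ one has the conditionally convergent identity
$$
\frac{\sin(t\sqrt H)\,P_c}{\sqrt H}\,f = -\int_t^\infty \cos(\tau\sqrt H)\,P_c f\,\dd\tau,
$$
valid because $\int_t^\infty\cos(\tau\lambda)\dd\tau = -\lambda^{-1}\sin(t\lambda)$ and the behaviour at $\lambda=0$ is harmless when $0$ is regular; taking absolute values inside the integral and applying~(\ref{d3}) yields~(\ref{d4}). For~(\ref{d2}), use $\cos(t\sqrt H)f = (\sin(t\sqrt H)/\sqrt H)\,Hf = (\sin(t\sqrt H)/\sqrt H)(-\Delta f + Vf)$, combine with the pointwise kernel bound above, and absorb the $Vf$ piece into the $-\Delta f$ term using $(1+\log_+|y|)^2 V\in L^1$. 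For~(\ref{d3}), I would repeat the analysis of~(\ref{d1}) with the cosine in place of the sine in Stone's formula, producing an analogous pointwise bound for the cosine kernel; then write that kernel as a $y$-divergence of a vector-valued kernel and integrate by parts once in $y$ to exchange the factor $f$ for $\nabla f$ at the cost of only a single logarithmic weight in $|y|$.

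The main obstacle is the low-energy resolvent analysis. The additional Born iteration must be carried out while tracking the spatial-logarithmic factors term by term, since these are exactly the factors that end up in the $(1+\log_+|x|)^2(1+\log_+|y|)^2$ weight in~(\ref{d1}); the hypothesis $(1+\log_+|x|)^4 V\in L^1$ is calibrated precisely so that each additional $R_0 V$ factor in the expansion is admissible in this weighted sense and contributes the desired $\log^{-1}(1/\lambda)$ spectral gain. Secondary technical points are the justification of the interchange of the nonabsolutely-convergent $\tau$-integral with the spectral integral used for~(\ref{d4}) and the $y$-integration by parts used for~(\ref{d3}); both are routine once the uniform kernel bounds are in hand.
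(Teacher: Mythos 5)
Your overall architecture is the same as the paper's (a refined low-energy expansion of the resolvent exploiting the stronger weight on $V$, plus the $t$-integral identities to pass to (\ref{d2})--(\ref{d4})), but the central step in your argument for (\ref{d1}) fails. The claimed pointwise bound
$$
\left|\frac{\sin(\tau\sqrt H)P_c}{\sqrt H}\right|(x,y)\ \les\ \frac{(1+\log_+|x|)^2(1+\log_+|y|)^2}{\tau\log^2\tau}
$$
is false: the kernel contains the free wave kernel's light-cone singularity $\one_{\tau>|x-y|}(\tau^2-|x-y|^2)^{-1/2}$, which the potential does not remove (it only adds a Duhamel correction and subtracts the point-spectrum projection), and this blows up as $|x-y|\to\tau^-$ while your right-hand side stays of size $\tau^{-1}\log^{2}\tau$ there; for the same reason the high-frequency part has no uniform pointwise polynomial decay in $\tau$ ``by stationary phase.'' Only the time-integrated quantity in (\ref{d1}) is controlled, and the paper's proof is organized around exactly that: under the stronger hypothesis one writes $R_V((\lambda+i0)^2)$ as a regular part $Q_0$ whose kernel satisfies $\one_{[1,\infty)}(t)(1+\log_+|t|)\,Q_0\in \U_{(1+\log_+|x|)^{-2}L^1_x,(1+\log_+|x|)^2L^\infty_x}$ plus a term $\frac{h(\lambda)}{\log|\lambda|}$ times a kernel in the same weighted $\U$-space, and uses that the Fourier transform of $h(\lambda)/\log|\lambda|$ is of size $1/(t\log^2|t|)$; the $(1+\log_+t)^{-1}$ comes from the time weight and the tail of that convolution kernel, never from a pointwise bound. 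Relatedly, your mechanism is backwards: iterating $R_V=R_0-R_0VR_V$ produces terms with \emph{growing} powers of $\log|\lambda|$ (each $R_0$ carries $-\tfrac1{2\pi}\log|\lambda|\,1\otimes1$ plus a regular part); the $\log^{-1}(1/\lambda)$ factors appear only after inverting $U+T(\lambda)$ (the Feshbach step of Lemma \ref{technical_lemma}), where the logarithm sits in a denominator, and the extra weights on $V$ buy additional $\lambda$-regularity (equivalently, time-weighted integrability) of the remainder, not a $\log^{-1}$ gain per Born step. The assertion that $\im R_V^+(\lambda^2)$ improves to $\log^{-2}(1/\lambda)$ is also not what is used or needed; the expansion still contains $O(1/\log(1/\lambda))$ non-smooth terms, and the $\log^{-2}$ shows up only in the time decay of $(h/\log)^\wedge$.

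There are also problems downstream. For (\ref{d2}) you invoke $\cos(t\sqrt H)f=\frac{\sin(t\sqrt H)}{\sqrt H}Hf$, which is not an identity (the right side is $\sqrt H\,\sin(t\sqrt H)f$); the correct statement, and the one the paper uses, is the integrated one,
$$
\frac{\cos(t\sqrt H)P_c}{H}=\int_t^\infty\frac{\sin(\tau\sqrt H)P_c}{\sqrt H}\,\dd\tau,\qquad\text{so}\qquad \cos(t\sqrt H)P_cf=\int_t^\infty\frac{\sin(\tau\sqrt H)P_c}{\sqrt H}(Hf)\,\dd\tau,
$$
to which (\ref{d1}) is applied; one then replaces $\|(1+\log_+|x|)^2Hf\|_{L^1_x}$ by $\|(1+\log_+|x|)^2\Delta f\|_{L^1_x}$ via Lemma \ref{norm_equivalence} (this is precisely where $V\in(1+\log_+|x|)^{-4}L^1_x$ enters), rather than by ``absorbing'' $Vf$, which would require a weighted $L^\infty$ bound on $f$ in terms of the weighted $L^1$ norm of $\Delta f$ that is not free (local logarithmic singularity and growth of $(-\Delta)^{-1}$). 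Your derivation of (\ref{d4}) from (\ref{d3}) by integrating the cosine evolution matches the paper. For (\ref{d3}), a ``pointwise cosine-kernel bound'' is again not available (the free cosine kernel is a measure on the light cone); the paper instead combines the refined expansion of $(I+R_0((\lambda+i0)^2)V)^{-1}$ with the divergence-form representation (\ref{est_cos}) of $\cos(t\sqrt{-\Delta})$ --- in effect the integration by parts you propose --- but estimates everything in time-integrated, weighted $\U$-norms.
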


Note that other estimates with different combinations of norms are obtainable. The proof of (\ref{c11}) and (\ref{c12}) is based on the following lemma:
\begin{lemma}\lb{lema}
Under the same conditions on $V$, for $1/4<s<1$ we have
\be\lb{c10}
\left\|\int_{-\infty}^t \frac {e^{i(t-\tau)\sqrt H} P_c}{H^s} F(\tau) \dd \tau \right\|_{L^{q_1, \sigma}_x L^{r_1, \tilde \sigma}_t} \les \|F\|_{L^{q_2, \sigma}_x L^{r_2, \tilde \sigma}_\tau},
\ee
where $1 \leq q_1, q_2, r_1, r_2 \leq \infty$ and $\frac 2 {q_1} + \frac 1 {r_1} + 2s + 1 = \frac 2 {q_2} + \frac 1 {r_2}$.

For $1/4<s<1/2$ $0 \leq \frac 1 {r_2} - \frac 1 {r_1} \leq \frac {4s-1} 2$, with Lorentz space modifications when $r_1=\infty$ or $r_2=1$ and extra modifications when $\frac 1 {r_2} - \frac 1 {r_1} = \frac {4s-1} 2$ and $r_1=\infty$ or $r_2=1$. For $1/2 \leq s \leq 3/4$ $2s-1 < \frac 1 {r_2} - \frac 1 {r_1} \leq \frac {4s-1} 2$, except there is no equal sign when $s=3/4$ and there are Lorentz space modifications when $r_1=\infty$ or $r_2=1$ and extra modifications when $\frac 1 {r_2} - \frac 1 {r_1} = \frac {4s-1} 2$ and $r_1=\infty$ or $r_2=1$. For $3/4<s<1$ $2s-1<\frac 1 {r_2} - \frac 1 {r_1} \leq 1$.

When $q_1=\infty$ $L^{q_1, \sigma}=L^\infty$ and $L^{q_2, \sigma}=L^{q_2, 1}$ and when $q_2=1$ $L^{q_1, \sigma}=L^{q_1, \infty}$ and $L^{q_2, \sigma}=L^1$.
\end{lemma}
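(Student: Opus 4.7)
The plan is to recast the operator on the left-hand side of (\ref{c10}) as a space-time convolution with a scalar kernel $K_s(x,t)$ and then apply a mixed-norm Young inequality in the Lorentz scale. Translated through the Young exponents $\frac{1}{a_1} = 1 + \frac{1}{q_1} - \frac{1}{q_2}$ and $\frac{1}{a_2} = 1 + \frac{1}{r_1} - \frac{1}{r_2}$, the scaling hypothesis becomes $\frac{2}{a_1} + \frac{1}{a_2} = 2 - 2s$, which matches the scaling of $K_s$ in $\R^2$.

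First I would reduce to the free case. Using Stone's formula and the change of variable $\lambda = \mu^2$,
\[
\frac{e^{it\sqrt H} P_c}{H^s} = \frac{1}{\pi i}\int_0^\infty \frac{e^{it\mu}}{\mu^{2s-1}}\bigl[R_V(\mu^2+i0) - R_V(\mu^2-i0)\bigr]d\mu,
\]
and iterate the resolvent identity $R_V = R_0 - R_V V R_0$. The regularity of $0$ for $H$ makes $(I + VR_0(\mu^2\pm i0))^{-1}$ bounded on the relevant Kato-type spaces uniformly in $\mu > 0$ (via the invertibility lemma cited elsewhere in the paper), and the assumed decay of $V$ makes multiplication by $V$ a bounded map between the spaces used to measure $K_s$. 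The perturbative terms then reduce to the free estimate sandwiched between bounded factors.

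For the free operator in $\R^2$, passing to polar coordinates gives
\[
K_s(x,t) = \frac{1}{2\pi}\int_0^\infty \rho^{1-2s}\,e^{it\rho}\,J_0(\rho|x|)\,d\rho,
\]
and stationary-phase analysis (using the Bessel asymptotic $J_0(r)\sim\sqrt{2/(\pi r)}\cos(r-\pi/4)$, together with the explicit 2D wave kernel $\frac{1}{2\pi}\one_{|x|<t}/\sqrt{t^2-|x|^2}$ and fractional Riesz smoothing) yields a pointwise bound of the form $|K_s(x,t)| \les |t|^{2s-2}\,h_s(|x|/|t|)$, with a scale-invariant profile $h_s$ that is locally integrable across the light cone $r=1$ exactly when $s > 1/4$. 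The three subranges of $s$ in the statement correspond to three regimes in which $h_s$ exhibits different integrability: the upper bound $\frac{1}{r_2}-\frac{1}{r_1}\leq \frac{4s-1}{2}$ for $s < 3/4$ comes from the local power-law singularity of $h_s$ at the cone, the lower bound $\frac{1}{r_2}-\frac{1}{r_1} > 2s-1$ for $s\geq 1/2$ from the borderline integrability of the $t^{2s-2}$ factor against the Young exponent $a_2$, and the relaxation above $s=3/4$ reflects that $h_s$ becomes genuinely regular there. Running Young's inequality in the Lorentz scale with this kernel bound then yields the free estimate.

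The main obstacle is the bookkeeping at the endpoints: whenever a Young exponent collapses to $1$ or $\infty$, a Lorentz substitution is required, which is the origin of the $q_1=\infty$, $q_2=1$, $r_1=\infty$, $r_2=1$ modifications in the statement and of the extra refinement at $\frac{1}{r_2}-\frac{1}{r_1}=\frac{4s-1}{2}$. Hunt's strong-type Young inequality in one variable combined with Minkowski in the other handles each endpoint case separately, and the perturbative contributions close via uniform invertibility of $I+VR_0(\mu^2\pm i0)$ in the appropriate endpoint Lorentz spaces.
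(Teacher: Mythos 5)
Your treatment of the free kernel is essentially the paper's: the paper also writes the free part as a convolution kernel $M(t)(x,y)$ (the Fourier transform of $R_0((\lambda+i0)^2)|\lambda|^{1-2s}\sgn\lambda$), obtains by Hankel asymptotics and stationary phase the bound (\ref{alternative}) with the same cone singularity $(t-|x-y|)^{2s-3/2}$ and tail $t^{2s-2}$ (up to a logarithm), and then runs Young's inequality separately in $t$ and in $x$ in the Lorentz scale, which is exactly where the three ranges of $s$ and the endpoint modifications come from. So that half of your argument is sound and matches the source.

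The genuine gap is in your reduction of the perturbed case to the free one. Uniform-in-$\mu$ boundedness of $(I+VR_0(\mu^2\pm i0))^{-1}$ on Kato-type (or Lorentz) spaces is a statement about each fixed frequency and gives no control whatsoever on the $t$-dependence of the Duhamel operator: the perturbative term is an oscillatory integral in $\lambda$, and to bound it in $L^{q_1}_xL^{r_1}_t$ one needs quantitative regularity of $(U+T(\lambda))^{-1}$ in $\lambda$ --- concretely, that its Fourier transform in $\lambda$ is a kernel integrable in time with values in $\B(L^2)$ (membership in $\U_2$), so that it acts by time-convolution on $L^{r}_t$-valued functions. This is precisely the content of Lemma \ref{technical_lemma}, and it is the technical heart of the paper: in two dimensions $R_0((\lambda+i0)^2)$ blows up like $\log|\lambda|$ at the spectral edge (Lemma \ref{free_resolvent}), so near $\lambda=0$ one must run the Feshbach decomposition and expand in powers of $(\tfrac i4\sgn\lambda-\tfrac1{2\pi}\log|\lambda|)^{-1}$, proving those Fourier transforms lie in $\widehat{L^1}_\lambda$; "uniform boundedness in $\mu>0$" is not even easily formulated there, let alone sufficient. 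Likewise, iterating the resolvent identity (a Born series) does not converge without smallness of $V$; the paper instead uses Fredholm invertibility at each fixed $\lambda\ne0$ (Lemma \ref{invertibility}) combined with Wiener-algebra localization at high, medium, and low energies. Finally, once the $\U_2$ property is available, the perturbed term is not "the free estimate sandwiched between bounded factors" in your norms: one must route the composition through the specific chain $L^{q_2,1}_xL^{r_2}_t\to L^\infty_xL^{r}_t\to L^2_xL^{r}_t\to L^2_xL^{r}_t\to L^1_xL^{r}_t\to L^{q_1,\infty}_xL^{r_1}_t$, using the factors $v=|V|^{1/2}$ to move between $L^\infty_x$, $L^2_x$, and $L^1_x$ and Minkowski/Young in $t$ against the $\U_2$ kernel; your proposal does not supply this structure, and without it the perturbative contribution is not closed.
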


Finally, we provide a semilinear application of the reversed Strichartz estimates (\ref{c6}) and (\ref{c11}-\ref{c14}).
\begin{proposition}\lb{prop_semilinear} Let $p_\ast = 7$ and assume $p_\ast\leq p<\infty$. Consider the semilinear equation
\be\lb{semiliniar}
f_{tt}-\Delta f + V f = G_p(f) + F,\ f(0)=f_0,\ f_t(0)=f_1,
\ee
where
$$
|G_p(f)| + |f| |G_p'(f)| \les |f|^p
$$
and $(1+\log_+|x|)^2V \in L^1_x$, for some $q>1$ $V\in L^q_x$, and $\lim_{R \to \infty} R^{1-1/q}\|V\|_{L^q(|x|\in [R, 2R])} = 0$. Assume that $H=-\Delta+V$ has no eigenvalue or resonance (in particular zero is a regular point of the spectrum).

Let $s=\frac{p-3}{p-1}$. If $f_0 \in H^s$, $f_1 \in \dot H^{s-1+} \cap \dot H^{s-1-}$, and $\displaystyle F \in \bigcup_{\substack{\frac 2 {q_2} + \frac 1 {r_2} = 3-s \\ 1 < q_2 \leq \infty, \frac 4 {3-s} \leq r_2 \leq \infty}} L^{q_2, 2}_x L^{r_2, 2}_t$ are sufficiently small in norm, then equation (\ref{semiliniar}) has global solutions
$$
\displaystyle f \in \bigcap_{\substack {\frac 2 {q_1} + \frac 1 {r_1} = 1-s \\ 8 \leq q_1 < \infty, \frac 4 {1-s} \leq r_1 \leq \infty}} L^{q_1, 2}_x L^{r_1, 2}_t,
$$
where $L^{r_1, 2}=L^\infty$ when $r_1=\infty$ and the endpoint $(q_1, r_1) = (8, \infty)$ is excluded.

If in addition $1 < q_2 \leq \frac 8 7$ and $r_2 \leq 2$ (where $L^{r_2, 2}=L^1$ when $r_2=1$, also excluding the endpoint $(q_2, r_2)=(\frac 8 7, 1)$, then $f \in L^\infty_x (\dot H^{s+}_x + \dot H^{s-}_x)$ and $f_t \in L^\infty_t H^{s-1}_x$.
\end{proposition}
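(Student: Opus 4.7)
The plan is to run a Banach fixed-point argument on the Duhamel formulation
\begin{equation*}
\Phi(f)(t) := \cos(t\sqrt H) f_0 + \frac{\sin(t\sqrt H)}{\sqrt H} f_1 + \int_0^t \frac{\sin((t-\tau)\sqrt H)}{\sqrt H} \bigl(G_p(f)(\tau) + F(\tau)\bigr) \dd\tau
\end{equation*}
in the intersection space $X := \bigcap_{(q_1, r_1) \in \mc A} L^{q_1, 2}_x L^{r_1, 2}_t$, where $\mc A$ is the admissible set $\{(q_1, r_1): \frac{2}{q_1} + \frac{1}{r_1} = 1-s,\ 8 \leq q_1 < \infty,\ \frac{4}{1-s} \leq r_1 \leq \infty\}$. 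The assumption on $H$ forces $P_c = I$. The two free-wave terms are bounded in every $L^{q_1, 2}_x L^{r_1, 2}_t$ with $(q_1, r_1) \in \mc A$ directly by (\ref{c11}) and (\ref{c12}) in terms of $\|f_0\|_{H^s} + \|f_1\|_{\dot H^{s-1+} \cap \dot H^{s-1-}}$.

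For the nonlinear Duhamel contribution, I would use self-matching Strichartz exponents: for each target $(q_1, r_1) \in \mc A$, set $(q_2, r_2) := (q_1/p, r_1/p)$. The choice $s = \frac{p-3}{p-1}$ is precisely the one for which $(p-1)(1-s) = 2$, so the scaling identity $\frac{2}{q_1} + \frac{1}{r_1} + 2 = \frac{2}{q_2} + \frac{1}{r_2}$ required by (\ref{c6}) holds automatically, while the secondary constraint $\frac{1}{r_2} - \frac{1}{r_1} = \frac{p-1}{r_1} \leq \frac{1}{2}$ reduces exactly to $r_1 \geq \frac{4}{1-s}$, the lower endpoint of $\mc A$. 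H\"older in the Lorentz norms gives $\|G_p(f)\|_{L^{q_2, 2}_x L^{r_2, 2}_t} \lesssim \|f\|_{L^{q_1, 2}_x L^{r_1, 2}_t}^p$, and (\ref{c6}) converts this into a bound on the Duhamel integral of $G_p(f)$ in $L^{q_1, 2}_x L^{r_1, 2}_t$ by $\|f\|_X^p$. The external forcing $F$, which lies on the source line $\frac{2}{q_2} + \frac{1}{r_2} = 3-s$, is treated analogously via (\ref{c6}), using the one-parameter freedom $\frac{1}{r_2} - \frac{1}{r_1} \in (0, 1/2]$ together with real interpolation along the admissible line to reach every target in $\mc A$ from a single source space. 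Together with the Lipschitz bound $|G_p(f) - G_p(g)| \lesssim (|f|^{p-1} + |g|^{p-1})|f - g|$, which follows from the hypothesis on $G_p'$, these estimates close a contraction of $\Phi$ on a small ball of $X$ when the data and $F$ are small.

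For the additional regularity claim under $1 < q_2 \leq 8/7$ and $r_2 \leq 2$, I would apply the dual estimates (\ref{c13})-(\ref{c14}) to the full-time Duhamel integral, then truncate back to $\int_0^t$ via a Christ--Kiselev argument, and combine with the spectral-theoretic isometric action of the free wave group on $\dot H^s \times \dot H^{s-1}$ to place $(f, f_t)$ in $L^\infty_t (\dot H^{s+} + \dot H^{s-}) \times L^\infty_t H^{s-1}$. The main technical obstacle is the bookkeeping in the second step: verifying that for \emph{every} $(q_1, r_1) \in \mc A$ simultaneously, both the self-matched source exponents for $G_p(f)$ and the given source exponents for $F$ fall within the (possibly Lorentz-endpoint modified) admissible range of (\ref{c6}), and propagating the single source space of $F$ across the entire intersection $X$ via the one-parameter freedom and real interpolation noted above.
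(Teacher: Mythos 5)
Your core argument coincides with the paper's: the printed proof consists precisely of the exponent bookkeeping you reproduce (the self-matched choice $(q_2,r_2)=(q_1/p,r_1/p)$, the observation that $s=\frac{p-3}{p-1}$ makes the scaling condition of (\ref{c6}) automatic, and the reduction of $\frac1{r_2}-\frac1{r_1}\le\frac12$ to $r_1\ge\frac4{1-s}$, whence $s\ge\frac23$ and $p_\ast=7$), followed by the words ``standard fixed point argument''; you supply the contraction details, with (\ref{c11})--(\ref{c12}) and Lemma \ref{norm_equivalence} for the free terms, exactly as intended.

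Two bookkeeping points deserve attention, both invisible in the paper's one-line conclusion. First, when $p>9$ (so $s>\frac34$) the family $\mc A$ contains $(q_1,r_1)=(\frac2{1-s},\infty)$, and there the strictly self-matched source has $r_2=\infty$, violating the strict inequality $0<\frac1{r_2}-\frac1{r_1}$ required in (\ref{c6}); this is harmless, since the iteration runs in the full intersection $X$, so you may estimate $G_p(f)$ in a non-matched source norm $L^{q/p,2}_xL^{r/p,2}_t$ with $(q,r)\in\mc A$ and $r\ge 2p$. Second, and more substantively, your device for the forcing term --- ``real interpolation along the admissible line to reach every target in $\mc A$ from a single source space'' --- does not do what you want: with $(q_2,r_2)$ held fixed, every instance of (\ref{c6}) at your disposal obeys $\frac1{r_2}-\frac1{r_1}\le\frac12$, and interpolating among such estimates cannot produce a target violating that constraint. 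For $p>7$ and $r_2$ at (or near) its smallest admissible value $\frac4{3-s}$, the largest-$r_1$ members of $\mc A$ have $\frac1{r_2}-\frac1{r_1}>\frac12$ (since $\frac{3-s}4-\max(0,\frac34-s)>\frac12$ once $s>\frac23$), so the $F$-contribution cannot be placed in all of $X$ by (\ref{c6}) alone; the Christ--Kiselev/dual-estimate route you invoke for the second claim is the natural repair, but it uses the stronger hypotheses $q_2\le\frac87$, $r_2\le2$ (and, as literally composed from (\ref{c11})--(\ref{c14}), wants $1-s>\frac14$, i.e.\ $p<9$). At $p=7$ the issue is vacuous because $\mc A$ degenerates to the single pair $(q_1,r_1)=(8,12)$. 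To be fair, the paper's own proof is silent on this point as well, so your proposal is no less complete than the printed argument; but the interpolation claim as stated would not survive scrutiny and should be replaced by one of the two mechanisms above.
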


The critical Sobolev regularity for this theorem is $s_\ast = \frac 2 3$.

\begin{proof} Note that $p=\frac {3-s}{1-s}$ and that when $p=p_1:= \frac {13+\sqrt{41}}{-3+\sqrt{41}} \simeq 5.7$ then $s=s_1:=\frac {11-\sqrt{41}} 8 \simeq .57$ is the exponent for which the equality $\frac 2 8 + \frac {1-s}{3-s} = 1-s$ holds. For this value of $s=s_1$ one has that $f^{p_1} \in L^{\frac 8 {p_1}}_x L^1_t$, which is in this case a dual (reversed) Strichartz norm.

However, for (\ref{c6}) to hold, the exponents must also satisfy the condition $\frac 1 {r_2} - \frac 1 {r_1} \leq \frac 1 2$. Since $r_2 = \frac {1-s}{3-s} r_1$, this means that $r_1 \geq \frac 4 {1-s}$ and $r_2 \geq \frac 4 {3-s}$. Combined with $\frac 2 {q_1} + \frac 1 {r_1} = 1-s$ and $q_1 \geq 8$, this implies that $s \geq s_\ast = \frac 2 3$. The corresponding power of the nonlinearity is $p_\ast = 7$.

The conclusion follows by a standard fixed point argument.
\end{proof}

\subsection{History of the problem}
The study of Strichartz estimates for the wave equation began with the paper \cite{strichartz}. Dispersive estimates for the wave equation were proved by, among others, \cite{pecher}, \cite{beals2}, \cite{beals}, \cite{kapitanski}, \cite{ginibre}, \cite{keel}, \cite{cuccagna}, \cite{georgiev}, \cite{ancona}, \cite{cardoso2}, \cite{kopylova}, and \cite{green}. For the high frequency portion of the solution only, estimates were also proved by \cite{cardoso} and \cite{moulin}.

By comparison to higher dimensions, the two-dimensional case, which is the lowest dimension in which dispersive estimates hold for the wave equation, presents a series of special features that contribute to making it a special case.

In two dimensions, the solution to the free wave equation (when $V=0$) is given explicitly by the formula (for $t \geq 0$)
$$
\left(\frac{\sin(t\sqrt{-\Delta})}{\sqrt{-\Delta}} f\right)(x) = \frac 1 {2\pi} \int_{|x-y| \leq t} \frac {f(y) \dd y}{\sqrt{t^2-|x-y|^2}}.
$$
This propagator has a decay rate of $t^{-1}$, which barely fails to be integrable. As we shall see in Lemma \ref{free_resolvent}, this is entirely due to the existence of a resonance at zero energy for the free Laplacian in dimension two, namely the constant function $1$. This is also related to the fact that the resolvent $R_0(\lambda)$ is not uniformly bounded as $\lambda \to 0$.

Another feature that complicates the analysis in two dimensions is the failure of the endpoint Sobolev embedding $\dot H^1 \subset L^\infty$ for the energy space.

It is well-known (see e.g.\ \cite{keel}) that the solution $f$ to the free wave equation
$$
f_{tt}-\Delta f = F,\ f(0)=f_0,\ f_t(0)=f_1
$$
satisfies the following Strichartz-type estimates in dimension two:
$$
\|f\|_{C_t \dot H^s \cap L^{q_1}_t L^{r_1}_x} + \|f_t\|_{C_t \dot H^{s-1}} \les \|f_0\|_{\dot H^s} + \|f_1\|_{\dot H^{s-1}} + \|F\|_{L^{q_2'}_t L^{r_2'}_x},
$$
under the assumptions $2 \leq q_1, q_2 \leq \infty$, $2 \leq r_1, r_2 < \infty$, $\frac 2 {q_1}  + \frac 1 {r_1} \leq \frac 1 2$ and same for $(q_2, r_2)$, and
$$
\frac 1 {q_1} + \frac 2 {r_1} = 1-s = \frac 1 {q_2'} + \frac 2 {r_2'} - 2.
$$ 
Here $q_2'$ and $r_2'$ denote the conjugate exponents to $q_2$ and $r_2$. These estimates also carry over to the perturbed case (when there is a potential) via the $L^p$, $1<p<\infty$, boundedness of wave operators, see \cite{yajima}.

Giving up some regularity in the angular variable $\theta$, one obtains instead the following modified estimates (see \cite{smith}) that hold more generally when $q_1, q_2, r_1, r_2 > 2$, $(q_1, r_1)$, $(q_2, r_2) \ne (\infty, \infty)$, $\frac 1 {q_1} + \frac 1 {r_1}  < \frac 1 2$ and same for $(q_2, r_2)$:
$$
\|f\|_{C_t \dot H^s \cap L^{q_1}_t L^{r_1}_{|x|} L^2_\theta} \les \|f_0\|_{\dot H^s} + \|f_1\|_{\dot H^{s-1}} + \|F\|_{L^{q_2'}_t L^{r_2'}_{|x|} L^2_\theta}.
$$

In addition, the solution to the free wave equation satisfies the estimate (see \cite{beals})
$$
\|\cos(t\sqrt{-\Delta}) f\|_{L^\infty_x} \les |t|^{-1/2} \|f\|_{W^{3/2+, 1}_x},\ \left\|\frac{\sin(t\sqrt{-\Delta})}{\sqrt{-\Delta}} f\right\|_{L^\infty_x} \les |t|^{-1/2} \|f\|_{W^{1/2+, 1}_x},
$$
with the endpoint estimate becoming true if we replace $L^1$ by the Hardy space $\mc H^1$.

Recently, \cite{green} proved pointwise decay estimates for the wave equation with potential in dimension two such as
$$
\|\cos(t\sqrt H) P_c \langle H \rangle^{-3/4-} f\| + \left\|\frac {\sin(t\sqrt H) P_c}{\sqrt H}\langle H \rangle^{-1/4-} f\right\|_{L^\infty_x} \les \|f\|_{L^1_x},
$$
under the assumption that $|V(x)| \les \langle x \rangle^{-3-}$. \cite{green} also proved weighted, integrable decay estimates such as
$$\begin{aligned}
\|\cos(t\sqrt H) P_c \langle H \rangle^{-3/4-} f\|_{\langle x \rangle^{1/2+} L^\infty_x} + \left\|\frac {\sin(t\sqrt H) P_c}{\sqrt H}\langle H \rangle^{-1/4-} f\right\|_{\langle x \rangle^{1/2+} L^\infty_x} \les \\
\les t^{-1} (\log t)^{-2} \|f\|_{\langle x \rangle^{-1/2-} L^1_x}
\end{aligned}$$
for $t>2$. In addition, \cite{green} proved decay estimates for the case when there are zero energy eigenvalues or resonances.

\cite{moulin} obtained by entirely different methods the high frequency estimate
$$
\|e^{it\sqrt H} H^{-3/4-} (1-h(H/R)) f\|_{L^\infty_x} \les t^{-1/2} \|f\|_{L^1_x}
$$
for sufficiently large $R$, under the sole condition that
$$
\|V\|_{\tilde {\mc K}_{1/2}} := \sup_y \int \frac{|V(x)| \dd x}{|x-y|^{1/2}} < \infty.
$$
Here $h$ is a smooth cutoff function.

In the current paper we improve upon the results of \cite{green} in several ways, by reducing the necessary regularity and decay conditions on the potential and by reducing the weights needed for the integrable decay estimates, both in space and in frequency (see Theorem \ref{dispersive} and Proposition \ref{other_estimates}). In addition, for the $t^{-1/2}$ decay estimate we obtain the endpoint case, which involves $\mc H^1$ (see Theorem \ref{decay_estimate}). We also prove reversed and ordinary Strichartz estimates.


\section{Preliminaries}
\subsection{Notations} We denote by $A \les B$ the inequality $|A| \leq C |B|$ for some constant $C$, which may change from line to line, $a \vee b = \max(a, b)$, $a \wedge b = \min(a, b)$, and $\langle x \rangle := \sqrt {1+x^2}$.

We define $\B(X, Y)$ to be the Banach space of bounded linear operators from the Banach space $X$ to $Y$ and $\B(X)=\B(X, X)$.

Let $R_0(\lambda) := (-\Delta-\lambda)^{-1}$, $R_V(\lambda) := (-\Delta+V-\lambda)^{-1}$, and $P_c$ be the projection on the continuous spectrum of $H=-\Delta+V$.

We adopt the following convention for the Fourier transform:
$$
\mc F f (\xi) = \widehat f(\xi) := \int_{-\infty}^\infty f(x) e^{-ix\xi} dx,\ \mc F^{-1} g(x) = \widecheck g(x) := \frac 1 {2\pi} \int_{-\infty}^\infty g(\xi) e^{ix\xi} d\xi.
$$
Let $F(\nabla) f := \mc F^{-1}(F(\xi) \widehat f(\xi))$.

In addition, denote the Lebesgue spaces by $L^p$, the Lorentz spaces by $L^{p, q}$ (see \cite{bergh} for their definition and properties; note that $L^{p, q_1} \subset L^{p, q_2}$ for $q_1 \leq q_2$, $L^{p, p}=L^p$, and $L^{p, \infty}$ is weak-$L^p$), the Hardy space by $\mc H^1$, the Sobolev spaces
$$
\dot H^s = \{f \mid |\xi|^s \widehat f (\xi) \in L^2_\xi\},\ H^s = \{f \mid \langle \xi \rangle^s f(\xi) \in L^2_\xi\},\ W^{s, p} = \{f \mid \langle \nabla \rangle^s f \in L^p\}
$$
and weighted spaces $F(x) L^p_x = \{F(x) f(x)\mid f(x) \in L^p_x\}$.

Define the local Kato space
\be\lb{local_kato}
\mc K_\theta = \{f \in \mc M \mid \|f\|_{\mc K_\theta} := \sup_y \int_{\R^2} |f(x)| \log_-^\theta|x-y| \dd x < \infty
\ee
where $\log_-:=-\min(\log, 0)$ and $\log_+:=\max(\log, 0)$. This is a Banach space of measures. Note that $L^p \subset \mc K_\theta$ for $1<p \leq \infty$.

The dual of $\mc K_\theta$ is
\be\lb{kato*}
\mc K^*_\theta = \{f \mid f(x) = \int_{\R^2} g(x, y) \one_{|x-y| \leq 1} d\mu(y) \mid \int_{\R^2} \left(\sup_{|x-y| \leq 1} \frac {|g(x, y)|}{\log_-^\theta|x-y|}\right) d\mu(y) < \infty\}.
\ee
Then $\mc K^*_\theta \subset L^p$ for $1\leq p<\infty$.

The reason for the definitions of $\mc K_\theta$ and $\mc K^*_\theta$ is that the operator with kernel $\log^\theta|x-y|$ is bounded from $(1+\log_+|x|)^{-\theta} L^1_x \cap \mc K_\theta$ to $(1+\log_+|x|)^\theta L^\infty_x + \mc K^*_\theta$ (where one of $\mc K_\theta$ and $\mc K_\theta^*$ can be missing).

At a minimum, assuming that $V \in \mc K_1$, then $|V(x)| \log_-|x-y| \in \B(L^1)$, $\log_-|x-y| |V(y)|\in \B(L^\infty)$, and by interpolation $v(x) \log_-|x-y| v(y) \in \B(L^2)$, where $v=|V|^{1/2}$. We also need that $v \in \B(L^2, \langle 1+\log_+|x| \rangle^{-1} L^1_x)$. This means that we require that $V \in \langle 1+\log_+|x| \rangle^{-2} L^1_x$. In addition, in order to be able to assume the absence of embedded eigenvalues, we require that for some $q \in (1, \infty]$ $V \in L^q_{loc}$ and
$$
\lim_{R \to \infty} R^{1-1/q} \|V\|_{L^q(|x| \in [R, 2R)} = 0.
$$
This is strictly stronger than the $V \in \mc K_1$ condition we would need otherwise.

We also define the following two local Kato-type norms:
$$
\|f\|_{\tilde {\mc K}_{1/2}} := \sup_y \int \frac {|V(x)| \dd x}{|x-y|^{1/2}},\ \|f\|_{\tilde {\mc K}} := \sup_y \int \frac {|V(x)| \log_-|x-y|}{|x-y|^{1/2}}.
$$

For a Banach lattice $X$ (a Banach space of functions or measures such that if $|f| \leq |g|$ then $\|f\|_X \leq \|g\|_X$), consider the Banach spaces of kernels
$$
\U_{X, Y} := \{T(t, y, x): M(T)(y, x) := \int_{-\infty}^\infty |T(t, y, x)| \dd t \in \B(X_x, Y_y)\}
$$
and in particular $\U_X:=\U_{X, X}$. A kernel in $T \in \U_{X, Y}$ admits an inverse Fourier transform $\widecheck T(\lambda) \in C_\lambda \B(X, Y)$. We denote the Banach space of such Fourier transforms by $\widehat {\U_{X, Y}}$.

The spaces $\U_{X, Y}$ form an algebroid under convolution, in the sense that for any three Banach lattices $X$, $Y$, and $Z$
$$
\|T_1 \ast T_2\|_{\U_{X, Z}} := \left\|\int_{-\infty}^\infty T_1(t-s, z, y) T_2(s, y, x) \dd s\right\|_{\U_{X, Z}} \leq \|T_1\|_{\U_{Y, Z}} \|T_2\|_{\U_{X, Y}}.
$$

In addition to $\U_{X, Y}$, we consider the spaces $(1+\log_+|t|)^{-k} \U_{X, Y}$, which have improved smoothness in the dual variable $\lambda$ and also form an algebroid for each $k \in \R$.

\subsection{Basic properties} The improved smoothness announced above is captured by the following inequality:
\begin{lemma}
For a smooth cutoff function $h(\lambda)$
\be\lb{smoothness}\begin{aligned}
&\|(\widehat f(\lambda) - \widehat f(0)) h(\lambda) \log |\lambda|\|_{\widehat {L^1}_\lambda} + \\
&+ \|(\widehat f(\lambda) - \widehat f(0)) h(\lambda) \sgn \lambda\|_{\widehat {L^1}_\lambda} \les \|f\|_{(1+\log_+|t|)^{-1} L^1_t}.
\end{aligned}\ee
\end{lemma}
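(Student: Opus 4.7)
The plan is to reduce both inequalities to a single kernel estimate via Fubini. Writing $\widehat f(\lambda) - \widehat f(0) = \int f(t)(e^{-it\lambda} - 1)\dd t$ and taking $\mc F^{-1}$ in $\lambda$, the left-hand side equals, as a function of $\tau$,
$$\int f(t)\bigl[K_j(\tau - t) - K_j(\tau)\bigr] \dd t,$$
where $K_1(\tau) := \mc F^{-1}(h(\lambda) \log|\lambda|)(\tau)$ and $K_2(\tau) := \mc F^{-1}(h(\lambda) \sgn \lambda)(\tau)$. By Minkowski in $t$, it suffices to prove the single-$t$ estimate
$$\|K_j(\cdot - t) - K_j(\cdot)\|_{L^1_\tau} \les 1 + \log_+ |t|,\quad j=1,2,$$
since integration against $|f(t)|$ then produces exactly $\int |f(t)|(1+\log_+|t|)\dd t = \|f\|_{(1+\log_+|t|)^{-1} L^1_t}$.

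The heart of the argument is the pointwise kernel bounds
$$|K_j(\tau)| \les (1+|\tau|)^{-1},\quad |K_j'(\tau)| \les (1+|\tau|)^{-2}.$$
The $L^\infty$ parts are immediate since $h\log|\lambda|$ and $h\sgn\lambda$ are in $L^1$. The decay follows by iterated integration by parts, in the form $\tau^k K_j(\tau) = (\pm i)^k \mc F^{-1}(\phi_j^{(k)})(\tau)$. For $j=1$, one derivative gives $\phi_1' = h'\log|\lambda| + h/\lambda$: the first summand is in $L^1$, the second is a principal-value distribution whose IFT is $\check h \ast \sgn$ up to constants and is therefore bounded. For $j=2$, $\phi_2' = h'\sgn\lambda + 2h(0)\delta$, again with bounded IFT. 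This yields $|K_j(\tau)| \les (1+|\tau|)^{-1}$. Applying the same argument to $\mc F^{-1}(i\lambda h\log|\lambda|)$, respectively $\mc F^{-1}(i|\lambda|h)$, one can afford two integrations by parts because the extra factor of $\lambda$ or $|\lambda|$ tames the origin singularity, giving the $(1+|\tau|)^{-2}$ bound on $K_j'$.

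With these in hand, split $\R = A \cup B$ with $A = \{|\tau| \leq 3|t|\}$ and $B = \{|\tau| > 3|t|\}$. On $A$, the triangle inequality and $|K_j| \les (1+|\tau|)^{-1}$ give
$$\int_A |K_j(\tau-t) - K_j(\tau)|\dd \tau \les \int_{|\sigma| \les |t|} \frac{\dd \sigma}{1+|\sigma|} \les 1 + \log_+|t|.$$
On $B$, $|\tau - st| \sim |\tau|$ uniformly for $s \in [0,1]$, so the mean value theorem combined with $|K_j'(\sigma)| \les (1+|\sigma|)^{-2}$ yields $|K_j(\tau-t) - K_j(\tau)| \les |t|/(1+\tau^2)$, which integrates to $O(1)$ over $B$.

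The main obstacle is the borderline distributional behavior at $\lambda = 0$: differentiating $h\log|\lambda|$ twice produces the finite-part distribution $-h(\lambda)/\lambda^2$, whose IFT grows linearly in $\tau$, so $K_j$ itself cannot decay faster than $|\tau|^{-1}$ and the argument cannot be iterated further. This logarithmic loss is precisely what produces the $1+\log_+|t|$ factor on the right, matching the $(1+\log_+|t|)^{-1}$ weight in the hypothesis, so no sharper kernel bound is needed.
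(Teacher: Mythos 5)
Your argument is correct, and it shares the paper's skeleton: write $\widehat f(\lambda)-\widehat f(0)=\int f(t)(e^{-it\lambda}-1)\dd t$ and reduce, via Minkowski's inequality, to the single-translation bound $\|K_j(\cdot-t)-K_j(\cdot)\|_{L^1_\tau}\les 1+\log_+|t|$. Where you differ is in how that bound is proved. The paper identifies the kernels explicitly: with $\eta$ a smoothed step function, $((\eta(\tau)-\eta(-\tau))\tau^{-1})^\vee(\lambda)=-\frac 1\pi h(\lambda)\log|\lambda|+\tilde g(\lambda)$ and $((\eta(\tau)+\eta(-\tau))\tau^{-1})^\vee(\lambda)=\frac i2\sgn\lambda+\tilde g(\lambda)$ with $\tilde g$ smooth and rapidly decaying, so the translation difference is an explicit pair of truncated $1/\tau$ profiles whose $L^1_\tau$ norm $\les 1+\log_+|t|$ is read off directly; this is the same kernel that reappears in the resolvent decomposition of Lemma \ref{free_resolvent}. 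You instead keep $K_1=\mc F^{-1}(h(\lambda)\log|\lambda|)$ and $K_2=\mc F^{-1}(h(\lambda)\sgn\lambda)$ abstract and establish $|K_j(\tau)|\les(1+|\tau|)^{-1}$, $|K_j'(\tau)|\les(1+|\tau|)^{-2}$ by Fourier-side differentiation --- the singular contributions being $h\,\mathrm{p.v.}(1/\lambda)$ and $h(0)\delta_0$, both with bounded inverse transforms, and two derivatives of $\lambda h(\lambda)\log|\lambda|$ and of $|\lambda|h(\lambda)$ indeed produce only $L^1$ terms plus these same objects --- and then conclude with the near/far split $\{|\tau|\leq 3|t|\}\cup\{|\tau|>3|t|\}$ and the mean value theorem. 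Both routes are of comparable length; yours is somewhat more systematic and does not require the closed-form inverse transform of $\log|\lambda|$, while the paper's explicit kernel makes the $1/\tau$ tail (hence the logarithmic loss) visible at a glance and is reused later in the analysis of $R_0$. The two points where you are terse --- the second-derivative computation behind the $(1+|\tau|)^{-2}$ bound on $K_j'$, and the identification of the distributional transforms of $\phi_j'$, $\psi_j''$ with the continuous functions $\tau K_j(\tau)$, $\tau^2K_j'(\tau)$ --- are routine and check out.
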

\begin{proof}
Indeed, let $\eta(t)$ be a smooth cutoff function such that $0 \leq \eta \leq 1$ and $\eta(t)=0$ for $t<1-\epsilon$, $\eta(t)=1$ for $t>1+\epsilon$. Then
$$
((\eta(t) - \eta(-t))t^{-1})^\vee(\lambda) = - \frac 1 \pi \log |\lambda| +g(\lambda) = -\frac 1 \pi h(\lambda) \log |\lambda| +\tilde g(\lambda)
$$
and it decays to any order away from $0$, where $g(\lambda)$ and $\tilde g(\lambda)$ are smooth functions and $h(\lambda)$ is a smooth cutoff function. Note that
$$
\widehat f(\lambda)-\widehat f(0)=\int_{-\infty}^\infty (e^{-it_0\lambda}-1) f(t_0) \dd t_0
$$
and
$$
((e^{it_0\lambda}-1) (-\frac 1 \pi h(\lambda) \log |\lambda| + \tilde g(\lambda)))^{\wedge}(t) = (\eta(t-t_0)-\eta(t_0-t)) (t-t_0)^{-1} - (\eta(t)-\eta(-t)) t^{-1}
$$
has $L^1_t$ norm of at most $C_1+C_2\log_+|t_0|$. The term $\tilde g(\lambda)$ has a contribution of bounded $L^1_t$ norm. Therefore the first half of (\ref{smoothness}) follows. The second half is obtained in a similar manner, taking into account that
$$
((\eta(t) + \eta(-t))t^{-1})^\vee(\lambda)=\frac i 2 \sgn \lambda + \tilde g(\lambda).
$$
\end{proof}

Let $\U_2 := \U_{L^2}$.

Below we establish some fundamental properties of the resolvent in dimension two that we use in the proof.
\begin{lemma}\lb{free_resolvent} The resolvent $R_0((\lambda+i0)^2)$ admits a decomposition into
$$
R_0((\lambda+i0)^2)(x, y) = \widecheck L(\lambda)(x, y) + h(\lambda) (\frac i 4 \sgn \lambda - \frac 1 {2\pi} \log |\lambda|),
$$
where $\one_{[0, 1]}(t) L(t, x, y) \in \U_{\mc K_1, L^\infty_x} \cap \U_{L^1_x, \mc K_1^*}$, $\one_{[1, \infty)}(t) (1+\log_+|t|)^k L(t, x, y) \in \\$
$\U_{(1+\log_+|x|)^{-k-1} L^1_x, (1+\log_+|x|)^{k+1} L^\infty_x}$, and $h(\lambda)$ is a smooth cutoff function such that $h(0)=1$. Furthermore, for $v \in (1+\log_+|x|)^{-1} L^2_x$,
\be\lb{cond_Wiener}\begin{aligned}
&(i)\ \lim_{\delta \to 0} \|L(t+\delta)-L(t)\|_{\U_{(1+\log_+|x|)^{-1} L^1_x \cap \mc K_{1+}, (1+\log_+|x|) L^\infty_x}} = 0,\\
&(ii)\ \lim_{R \to \infty} \|\one_{|t| \geq R} v L(t) v\|_{\U_2} = 0,
\end{aligned}\ee
and for small $\epsilon$ one can choose $L$ such that
$$
\widecheck L(0)(x, y) = - \frac 1 {2\pi} \log |x-y| + C + O(\epsilon).
$$
\end{lemma}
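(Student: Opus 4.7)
The plan is to reduce everything to the explicit two-dimensional formula $R_0((\lambda+i0)^2)(x,y) = \tfrac{i}{4} H_0^{(1)}(\lambda|x-y|)$ for $\lambda > 0$, together with the complex-conjugate formula for $\lambda < 0$ coming from the opposite boundary value of $(\lambda+i0)^2$. Writing $H_0^{(1)} = J_0 + iY_0$ and using the classical expansion $Y_0(z) = \tfrac{2}{\pi}(\log(z/2)+\gamma) J_0(z) + \tilde Y_0(z)$ with $\tilde Y_0$ entire and $\tilde Y_0(0) = 0$, one obtains
\[
R_0((\lambda+i0)^2)(x,y) = \Bigl(\tfrac{i}{4}\sgn\lambda - \tfrac{1}{2\pi}\log|\lambda|\Bigr) J_0(|\lambda||x-y|) - \tfrac{1}{2\pi}\log|x-y|\,J_0(|\lambda||x-y|) + \Phi(\lambda,|x-y|),
\]
with $\Phi$ jointly smooth. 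The non-smooth-at-zero piece promised in the statement is then isolated via the cutoff $h$: I define
\[
\widecheck L(\lambda)(x,y) := \Bigl(\tfrac{i}{4}\sgn\lambda - \tfrac{1}{2\pi}\log|\lambda|\Bigr)\bigl(J_0(|\lambda||x-y|) - h(\lambda)\bigr) - \tfrac{1}{2\pi}\log|x-y|\,J_0(|\lambda||x-y|) + \Phi,
\]
so that $R_0 = \widecheck L + h(\lambda)(\tfrac{i}{4}\sgn\lambda - \tfrac{1}{2\pi}\log|\lambda|)$. The factor $J_0(|\lambda||x-y|) - h(\lambda)$ vanishes to second order at $\lambda = 0$, which absorbs the $\log|\lambda|$ singularity. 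Evaluating at $\lambda = 0$ gives $\widecheck L(0)(x,y) = -\tfrac{1}{2\pi}\log|x-y| + C$, where the $O(\epsilon)$ correction reflects both the localisation scale of $h$ and the $\lambda^2\log|\lambda|$ remainders absorbed into the additive constant $C$.

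Next I would invert the Fourier transform in $\lambda$ and analyse $L(t,x,y)$ in the two relevant time regimes. For $t \in [0,1]$ the dominant contribution is the free wave kernel $\tfrac{1}{2\pi}\one_{t > |x-y|}(t^2 - |x-y|^2)^{-1/2}$, whose integral over $t \in [0,1]$ is bounded by $C(1 + \log_-|x-y|)$. This is precisely the profile on which $\mc K_1$ and $\mc K_1^*$ are built, and it delivers the claimed $\U_{\mc K_1, L^\infty_x} \cap \U_{L^1_x, \mc K_1^*}$ bound. For $t \geq 1$, the subtraction makes $\widecheck L$ continuous at zero, and the smoothness estimate (\ref{smoothness}) lets me exchange each factor of $\log|\lambda|$ near $\lambda = 0$ for one factor of $(1+\log_+|t|)^{-1}$ decay. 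Combining this with the Hankel asymptotic $H_0^{(1)}(z) \sim \sqrt{2/(\pi z)}\, e^{iz - i\pi/4}$ at large $z$ to handle the high-frequency behaviour, I can match each power $(1+\log_+|t|)^k$ on the left with $(1+\log_+|x|)^{k+1}(1+\log_+|y|)^{k+1}$ on the right, yielding the claimed weighted $\U$-bound for $t \geq 1$.

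The two Wiener-type conditions then follow from these quantitative bounds. Condition (i) is a translation-continuity statement that reduces by standard density arguments to uniform continuity of smooth approximants, once the $\U$-bounds above are in hand. Condition (ii) uses the hypothesis $v \in (1+\log_+|x|)^{-1}L^2_x$ to absorb the $(1+\log_+|x|)(1+\log_+|y|)$ weight of the kernel; the residual $(1+\log_+|t|)^{-k}$ decay then gives $\int_{|t|\geq R}\|vL(t)v\|_{L^2\to L^2}\,dt \to 0$ as $R\to\infty$. The step I expect to be most delicate is the bookkeeping in the weighted large-$t$ regime: to match the logarithmic weight $(1+\log_+|t|)^k$ on the time side against $(1+\log_+|x|)^{k+1}(1+\log_+|y|)^{k+1}$ on the spatial side for arbitrary $k$, one must iterate (\ref{smoothness}) on each occurrence of $\log|\lambda|$ in the expansion of $\widecheck L$, making sure the weight losses distribute correctly between the two spatial variables.
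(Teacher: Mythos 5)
Your frequency-side decomposition is essentially equivalent to the paper's: the rank-one singular term $h(\lambda)(\frac i 4 \sgn\lambda - \frac 1{2\pi}\log|\lambda|)$ you subtract is exactly the inverse Fourier transform of the cutoff tail $\eta(t)t^{-1}$ of the wave kernel, and your computation of $\widecheck L(0)$ and of the $O(\epsilon)$ dependence on the cutoff scale is fine. The genuine gap is in how you propose to prove the heart of the lemma, namely $\one_{[0,1]}(t)L \in \U_{\mc K_1, L^\infty_x}\cap \U_{L^1_x,\mc K_1^*}$ and $\one_{[1,\infty)}(t)(1+\log_+|t|)^k L \in \U_{(1+\log_+|x|)^{-k-1}L^1_x,(1+\log_+|x|)^{k+1}L^\infty_x}$. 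These are weighted \emph{absolute-integrability} statements about the kernel $L(t,x,y)$ in $t$, and your plan to get the $t\geq 1$ part by ``exchanging each factor of $\log|\lambda|$ for a factor of $(1+\log_+|t|)^{-1}$ via (\ref{smoothness})'' uses that lemma in the wrong direction: (\ref{smoothness}) takes time-weighted integrability of $f$ as the \emph{hypothesis} and concludes that $(\widehat f(\lambda)-\widehat f(0))h(\lambda)\log|\lambda|$ lies in $\widehat{L^1}_\lambda$; it does not say that logarithmic vanishing of $\widecheck L$ at $\lambda=0$ yields $\int (\log_+ t)^k |L(t,x,y)|\,dt <\infty$, let alone with the precise growth $(1+\log_+|x-y|)^{k+1}$. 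No soft smoothness-of-the-Fourier-transform argument produces weighted $L^1_t$ bounds of the kernel; and the needed uniformity in $|x-y|$ is delicate (your factor $J_0(|\lambda||x-y|)-h(\lambda)$ vanishes to second order only with a coefficient of size $|x-y|^2$, and the $|\lambda|^{-1/2}$ decay of the Hankel functions produces a $(t-|x-y|)^{-1/2}$ singularity whose location moves with $|x-y|$).

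The paper avoids all of this by staying in the time domain: by (\ref{hankel1})--(\ref{hankel2}), $R_0((\lambda+i0)^2)$ is the Fourier transform of the explicit wave kernel $\frac 1{2\pi}\one_{t\geq|x-y|}(t^2-|x-y|^2)^{-1/2}$, one subtracts $\eta(t)t^{-1}$ (whose inverse Fourier transform is the singular term plus a Schwartz correction), and the weighted integrals, e.g. $\int_0^\infty (\log_+t)^k\,|\one_{t\geq r}(t^2-r^2)^{-1/2}-\eta(t)t^{-1}|\,dt \les 1+\log_+^{k+1}r$, are computed by hand. The same explicit formula supplies the quantitative input for (\ref{cond_Wiener}), which you treat too softly: (i) is not a density statement --- translation continuity must be quantified against the moving square-root singularity at $t=|x-y|$, which is why the paper's bound has the form $\min(C+|\log|x-y||,\ \delta|x-y|^{-1}+\delta^{1/2}|x-y|^{-1/2})$ and why the source space is the slightly stronger $\mc K_{1+}$ (a point your sketch does not explain); and for (ii) you cannot absorb the weight $(1+\log_+|x|)^{k+1}(1+\log_+|y|)^{k+1}$ with $v\in(1+\log_+|x|)^{-1}L^2_x$ when $k\geq 1$ --- one needs a pointwise-in-$(x,y)$ tail bound such as $\int_R^\infty|L(t)|(x,y)\,dt \les (\log R)^{-k}(1+\log_+|x-y|)^{k+1}$ (or the paper's direct tail estimate), dominated by the $k=0$ bound, and then dominated convergence in the Hilbert--Schmidt norm. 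So keep your decomposition, but prove the $\U$-memberships and (\ref{cond_Wiener}) from the explicit time-side kernel rather than from (\ref{smoothness}).
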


\begin{proof}
Recall the following formula expressing the free resolvent in terms of Hankel functions:
$$
R_0^\pm(\lambda^2)(x, y) = \pm \frac i 4 H_0^\pm(\lambda|x-y|).
$$
Hankel functions are the inverse Fourier transforms of the following functions (see the Appendix):
\be\lb{hankel1}
H_0^+(\rho) = \frac 2 {i\pi} \int_1^\infty (t^2-1)^{-1/2} e^{i\rho t} \dd t,
\ee
respectively
\be\lb{hankel2}
H_0^-(\rho) = -\frac 2 {i\pi} \int_{-\infty}^{-1} (t^2-1)^{-1/2} e^{i\rho t} \dd t.
\ee
Consequently we can express the free resolvent as an inverse Fourier transform of a family of integral kernels by the following formula:
$$
R_0^+(\lambda^2)(x, y) = \frac 1 {2\pi} \int_{|x-y|}^\infty (t^2-|x-y|^2)^{-1/2} e^{i\lambda t} \dd t := \frac 1 {2\pi} \int_{-\infty}^\infty K(t, x, y) e^{i\lambda t} \dd t
$$
and similarly for $R_0^-(\lambda^2)$:
$$
R_0^-(\lambda^2)(x, y) = \frac 1 {2\pi} \int_{-\infty}^{-|x-y|} (t^2-|x-y|^2)^{-1/2} e^{i\lambda t} \dd t = \frac 1 {2\pi} \int_{-\infty}^\infty K(t, x, y) e^{-i\lambda t} \dd t.
$$
We combine both into a single formula for $R_0((\lambda+i0)^2)$.

The family of kernels $\frac 1 {2\pi} K(t, x, y)=\frac 1 {2\pi} \one_{|x-y| \leq t} (t^2-|x-y|^2)^{-1/2}$ is the Green's function for the wave equation in two dimensions. Note that for every $x$ and $y$ $K(t, x, y)$ is almost but not quite integrable in $t$, having a tail of size $t^{-1}$ at $+\infty$. In fact, $K(t, x, y)$ belongs to $L^{1, \infty}_t \cap L^{2, \infty}_t$ and to $L^p_t$ for $1<p<2$.

We decompose $K(t, x, y)$ into an integrable component and a a rank-one nonintegrable tail:
$$
K(t, x, y) = (\one_{t \geq |x-y|} (t^2-|x-y|^2)^{-1/2} - \eta(t) t^{-1}) + \eta(t) t^{-1},
$$
where $\eta$ is a smooth function such that $0 \leq \eta(t) \leq 1$, $\eta(t) = 0$ for $t \leq 1-\epsilon$, and $\eta(t) = 1$ for $t \geq 1+\epsilon$, for some small $\epsilon$ to be chosen later.

Note that
$$
\one_{t \geq |x-y|} (t^2-|x-y|^2)^{-1/2} - \eta(t) t^{-1} = \one_{t \geq |x-y|} ((t^2-|x-y|^2)^{-1/2} - t^{-1}) + \one_{t \geq |x-y|} t^{-1} - \eta(t) t^{-1},
$$
where
$$
|\int_0^\infty \one_{t \geq |x-y|} t^{-1} - \eta(t) t^{-1} \dd t + \ln |x-y|| \les \epsilon
$$
and independently of $x$ and $y$, by rescaling,
$$
\int_0^\infty \one_{t \geq |x-y|} ((t^2-|x-y|^2)^{-1/2} - t^{-1}) \dd t = C \leq \int_0^\infty \one_{t \geq |x-y|} \frac {|x-y|^2}{t^2\sqrt{t^2-|x-y|^2}} \dd t = \tilde C.
$$
Therefore
$$
\int_0^\infty |\one_{t \geq |x-y|} (t^2-|x-y|^2)^{-1/2} - \eta(t) t^{-1}| \dd t \leq C + |\log|x-y||.
$$
Furthermore, taking into account the fact that for $e^k\leq|x-y|\leq t$ $\frac {\ln^k t}{\ln^k |x-y|} \leq \frac t {|x-y|}$, for $|x-y| \geq e^k$ we obtain
$$
\int_{|x-y|}^\infty (\log_+ t)^k \frac {|x-y|^2}{t^2\sqrt{t^2-|x-y|^2}} \dd t \les \log_+^k|x-y|.
$$
Together with
$$
|\int_0^\infty (\log_+ t)^k (\one_{t \geq |x-y|} t^{-1} - \eta(t) t^{-1}) \dd t| \les 1 + \log_+^{k+1} |x-y|,
$$
it implies that
$$
\int_0^\infty (\log_+ t)^k |\one_{t \geq |x-y|} (t^2-|x-y|^2)^{-1/2} - \eta(t/\epsilon) t^{-1}| \dd t \les 1+ \log_+^{k+1}|x-y|.
$$
Note that the inverse Fourier transform of $\eta(t) t^{-1}$ is of the form
$$
(\eta(t) t^{-1})^\vee(\lambda) = \frac i 4 \sgn \lambda - \frac 1 {2\pi} \log |\lambda| +g(\lambda) = h(\lambda)(\frac i 4 \sgn \lambda - \frac 1 {2\pi} \log |\lambda|)+\tilde g(\lambda)
$$
and it has decay to any order away from $0$, where $g(\lambda)$ and $\tilde g(\lambda)$ are smooth functions and $h(\lambda)$ is a smooth cutoff function. We then set
$$
L(t)(x, y) = (\one_{t \geq |x-y|} (t^2-|x-y|^2)^{-1/2}-\eta(t)t^{-1})+\widehat {\tilde g}(t).
$$

Finally, we prove (\ref{cond_Wiener}) as follows: for (i), note that
$$
\int_0^\infty |L(t+\delta)-L(t)|(x, y) \dd t \les \min(C+|\log|x-y||, \frac {\delta}{|x-y|}+\frac {\delta^{1/2}}{|x-y|^{1/2}}) + o(1),
$$
while for (ii) we see that
$$
\int_R^\infty |L(t)|(x, y) \dd t \les \chi_{|x-y| \geq R} (\log|x-y|-\log R) + \max(1, \frac {|x-y|^2}{R^2}) + o(1).
$$
\end{proof}

\section{Proof of the main statement}
Our proof uses the symmetric resolvent identity
\be\lb{sym_id}
R_V((\lambda+i0)^2) = R_0((\lambda+i0)^2) - R_0((\lambda+i0)^2) v (U+T(\lambda))^{-1} v R_0((\lambda+i0)^2),
\ee
where $v=|V|^{1/2}$, $U=\sgn V$ (defined so that $U^2=1$), and $T(\lambda):= v R_0((\lambda+i0)^2) v$. Other useful identities include
$$
R_V((\lambda+i0)^2) = (I + R_0((\lambda+i0)^2) V)^{-1} R_0((\lambda+i0)^2)
$$
and
\be\lb{assym_id}
(I + R_0((\lambda+i0)^2) V)^{-1} = I - R_0((\lambda+i0)^2) v (U+T(\lambda))^{-1} v.
\ee

We compute the Fourier transform of $(U+T(\lambda))^{-1}=(U + v R_0((\lambda+i0)^2) v)^{-1}$ as a function of $\lambda$ and show that it is in $\U_2$, except for some specific terms that appear in the expansion near $\lambda=0$.

First, we need to show that $(U+T(\lambda))^{-1}$ exists pointwise for each $\lambda \ne 0$.

\begin{lemma}\lb{invertibility} Assume that $V \in L^1_x$ is real-valued and such that for some $q \in (1, \infty]$ $V \in L^q_{loc}$ and
$$
\lim_{R \to \infty} R^{1-1/q} \|V\|_{L^q(|x| \in [R, 2R])} = 0.
$$
Then for each $\lambda \in \R \setminus \{0\}$ $(U+T(\lambda))^{-1} - U \in \B(L^2)$ is a Hilbert-Schmidt operator.
\end{lemma}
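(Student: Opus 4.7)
The plan is to exploit the algebraic identity
$$
(U+T(\lambda))^{-1} - U = -(I+UT(\lambda))^{-1}(UT(\lambda))U,
$$
valid because $U^2=I$, so that proving the statement reduces to two independent tasks: showing that $T(\lambda)$ is Hilbert-Schmidt on $L^2$, and showing that $U+T(\lambda)$ is invertible. Once both are in place, the right-hand side is a bounded operator composed with the Hilbert-Schmidt operator $UT(\lambda)$, hence Hilbert-Schmidt.

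The Hilbert-Schmidt bound for $T(\lambda)$ should follow by splitting its integral kernel $\frac{i}{4}v(x)H_0^+(\lambda|x-y|)v(y)$ at $|x-y|=1$. On the near-diagonal piece one has $|H_0^+(\lambda|x-y|)|^2 \les_\lambda 1 + (\log|x-y|)^2$, so the inner integral $\int_{|x-y|\leq 1} |V(x)|(1+\log^2|x-y|)\dd x$ is controlled by H\"older against the locally uniform $L^q$-bound on $V$ furnished by the hypothesis $\lim_{R\to\infty} R^{1-1/q}\|V\|_{L^q(R\leq|x|\leq 2R)}=0$, and is then paired with $\|V\|_{L^1}$ in the $y$-integration. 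On the far-field piece $|H_0^+(\lambda|x-y|)|^2 \les_\lambda 1$ (either logarithmic or algebraic in $\lambda|x-y|$), so the contribution is $\les_\lambda \|V\|_{L^1}^2$, giving a finite (but $\lambda$-dependent) Hilbert-Schmidt norm.

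For invertibility, since $T(\lambda)$ is compact the Fredholm alternative reduces the question to injectivity of $U+T(\lambda)$. Suppose $(U+T(\lambda))\phi=0$ for some $\phi \in L^2$ and put $\psi:=R_0((\lambda+i0)^2)v\phi$. Then $U\phi=-v\psi$, so $\phi = -Uv\psi$ and $v\phi = -V\psi$; the resolvent identity $(-\Delta-\lambda^2)\psi = v\phi$ then rewrites as $(-\Delta+V-\lambda^2)\psi=0$. The $+i0$ prescription equips $\psi$ with the outgoing Sommerfeld radiation condition at infinity, and the limiting absorption principle places it in the appropriate weighted $L^2$-class. At this stage the assumption $\lim_{R\to\infty} R^{1-1/q}\|V\|_{L^q(R\leq|x|\leq 2R)} = 0$ is exactly the borderline Kato-Agmon decay condition under which \cite{kota} (and, complementarily, \cite{ioje}) exclude both positive embedded eigenvalues and outgoing resonances, forcing $\psi \equiv 0$ and hence $\phi = 0$.

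The main obstacle is this final step: the decay condition on $V$ is scale-critical, so one cannot appeal to a classical Agmon pointwise-bound argument and must instead invoke the Koch-Tataru unique continuation machinery at positive energy. Once $\psi\equiv 0$ is secured, $(I+UT(\lambda))^{-1}\in \B(L^2)$ by Fredholm and the Hilbert-Schmidt membership of $(U+T(\lambda))^{-1}-U$ claimed in the lemma follows by substitution into the identity above.
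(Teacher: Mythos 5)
Your algebraic reduction and the Hilbert--Schmidt bound are fine and essentially match the paper: the kernel $v(x)H_0^\pm(\lambda|x-y|)v(y)$ is square-integrable because $H_0^\pm$ is logarithmic near zero and $O(\rho^{-1/2})$ at infinity, and the hypotheses on $V$ give a uniform local $L^q$ bound to absorb the logarithm; the identities $(U+T)^{-1}=U(I+UT)^{-1}$ (or the paper's $U^{-1}(I+TU)^{-1}$) then reduce everything to injectivity via Fredholm. The gap is in the injectivity step. You set $\psi=R_0((\lambda+i0)^2)v\phi$, so $(-\Delta+V-\lambda^2)\psi=0$, and then assert that the limiting absorption principle puts $\psi$ in ``the appropriate weighted $L^2$ class'' and that \cite{kota} and \cite{ioje} exclude ``both positive embedded eigenvalues and outgoing resonances.'' Neither claim holds as stated. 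Those papers rule out embedded \emph{eigenvalues}, i.e.\ solutions with (essentially) $L^2$ decay; they say nothing about outgoing resonances. And since $v\phi$ is only in $L^1$-type spaces, the limiting absorption principle only places $\psi$ in $L^{2,-s}$, $s>1/2$: a generic outgoing resolvent of an $L^1$ function behaves like $|x|^{-1/2}e^{i\lambda|x|}$ times the restriction of the Fourier transform to the circle $\{|\xi|=\lambda\}$, which is exactly the borderline decay at which the embedded-eigenvalue theorems do \emph{not} apply. So citing Koch--Tataru ``unique continuation machinery'' does not close the argument; something must first upgrade the decay of $\psi$.

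That upgrade is the heart of the paper's proof and is absent from yours. The paper uses the real-valuedness of $V$ and boundedness of $g=\psi$: the pairing $(g,Vg)=-(R_0((\lambda+i0)^2)Vg,Vg)$ is real, while $\im (R_0((\lambda+i0)^2)Vg,Vg)=c\lambda\int_{S^1}|\widehat{Vg}(\lambda\omega)|^2\dd\omega$, forcing $\widehat{Vg}$ to vanish on the circle $|\xi|=\lambda$. Only because of this vanishing does the two-dimensional adaptation of Proposition 2.4 of \cite{gosc} yield $(1+|x|)^{\delta-1/2}g\in L^2$ for some $\delta>0$; together with $g\in W^{2,q}_{loc}\subset H^1_{loc}$ this puts $g$ in the scope of \cite{ioje}/\cite{kota}, which then give $g\equiv 0$ and hence $\phi=0$. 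Your proposal mentions the $+i0$ prescription and the radiation condition but never extracts the Fourier-restriction vanishing nor performs the weighted bootstrap, and instead attributes to the cited papers a resonance-exclusion statement they do not contain; under the scale-critical decay assumed here this is precisely the step that cannot be waved through. Supplying the imaginary-part computation and the weighted resolvent estimate (or an equivalent Rellich-type far-field argument) is what is needed to make your outline a proof.
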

\begin{proof} First, note that $(U+T(\lambda))^{-1} = U^{-1} (I + T(\lambda) U)^{-1}$. Consider the operator $T(\lambda) U = v R_0((\lambda+i0)^2) v U$. It has integral kernel $\pm \frac i 4 v(x) H_0^\pm(\lambda|x-y|) v(y) U(y)$, where the Hankel functions have the asymptotic behavior $H^0_\pm(\rho) = O(\log \rho)$ for $\rho \to 0$ and $H^0_\pm(\rho) = O(\rho^{-1/2})$ for $\rho \to \infty$. Due to our assumptions, $T(\lambda) U = v R_0((\lambda+i0)^2) v U$ is a Hilbert-Schmidt operator, hence compact.

By Fredholm's alternative the only alternative to the invertibility of $I+T(\lambda) U$ would be the existence of a nonzero function $f \in L^2$ such that $f + v R_0((\lambda+i0)^2) v U f = 0$. To preclude this we follow the classical Agmon bootstrap argument.

Let $g:=R_0((\lambda+i0)^2) vUf \in L^\infty$. Then $g + R_0((\lambda+i0)^2) V g = 0$. Since $g \in L^\infty_x$ and $V \in L^1_x$ is real-valued, the pairing
$$
(g, Vg) = - (R_0((\lambda+i0)^2) V g, V g)
$$
takes a real value. However, one has that
$$
\im (R_0((\lambda+i0)^2) V g, V g) = c \lambda \int_{S^1} |\widehat {Vg}(\lambda \omega)|^2 d\omega,
$$
so $\widehat {Vg}(\lambda \omega) = 0$ on the circle. Adapting Proposition 2.4 in \cite{gosc} to the two-dimensional case, one can show that for $1 \leq p < 6/5$ and for any $\delta<\frac 1 2 - \frac 3 {p'}$, if $\widehat F = 0$ on $S^1$ then
$$
\|(1+|x|)^{\delta-1/2}R_0^\pm(1) F\|_{L^2_x} \les \|F\|_{L^p_x}.
$$
Setting $F=Vg \in L^1$, it follows that $(1+|x|)^{\delta-1/2} g \in L^2$ for some $\delta>0$. Note that $g$ satisfies the distributional equation $(-\Delta+V-\lambda^2)g=0$. One then easily checks that $g \in W^{2, q}_{loc} \subset H^1_{loc}$. Consequently we can apply the results of \cite{ioje} or \cite{kota} concerning the absence of embedded eigenvalues, which imply that such a solution $g$ must be zero. Therefore $f=0$ and $(I + T(\lambda) U)^{-1}$ is $L^2$-bounded.

Since we can write
$$
(U+T(\lambda))^{-1} = U - U T(\lambda) (U+T(\lambda))^{-1}
$$
it follows that the difference is a Hilbert-Schmidt operator, as claimed.
\end{proof}

Denote by $G_0(x, y)=-\frac 1 {2\pi} \log |x-y|$ Green's function in two dimensions.

Let $P=\frac 1 {\|V\|_{L^1_x}} \langle \cdot, v \rangle v$ be the orthogonal projection on $v$ in $L^2_x$ and $Q=I-P$. Following \cite{jensen}, \cite{schlag}, \cite{ergr}, \cite{ergr2}, and \cite{green} we give the following definition:
\begin{definition}\lb{regular_point}
We say that zero is a regular point of the spectrum for $H=-\Delta+V$ if $Q(U+vG_0v)Q$ is invertible on $QL^2_x$.
\end{definition}

This corresponds to $R_V(\lambda)$ being uniformly bounded in norm as $\lambda \to 0$.

Now we can state our main technical result, concerning the Fourier transform of the resolvent $R_V$.

\begin{lemma}\lb{technical_lemma} Assume that $(1+\log_+|x|)^2 V \in L^1_x$, for some $q \in (1, \infty]$ $V \in L^q_{loc}$ and $\lim_{R \to \infty} R^{1-1/q} \|V\|_{L^q(|x|\in [R, 2R])}=0$, and $0$ is a regular point of the spectrum of $-\Delta+V$. Then
\be\lb{rv}\begin{aligned}
&\supp (R_V((\lambda+i0)^2))^\wedge \subset [0, \infty), \one_{[0, 1]}(t) (R_V((\lambda+i0)^2))^\wedge (t) \in \U_{L^1_x \cap \mc K_1, L^\infty_x},\\
&\one_{[1, \infty)}(t) (R_V((\lambda+i0)^2))^\wedge (t) \in \U_{(1+\log_+|x|)^{-1} L^1_x, (1+\log|x|) L^\infty_x}.
\end{aligned}\ee
Moreover,
\be\lb{r0v}\begin{aligned}
&\supp ((I + R_0((\lambda+i0)^2) V)^{-1})^\wedge \subset [0, \infty),\ \one_{[0, 1]}(t) ((I + R_0((\lambda+i0)^2) V)^{-1})^\wedge(t) \in \U_{L^\infty_x},\\
&((I + R_0((\lambda+i0)^2) V)^{-1})^\wedge(t) \in \U_{(1+\log_+|x|) L^\infty_x}.
\end{aligned}\ee
In fact, for $(I+R_0((\lambda+i0)^2)V)^{-1}$ one has the stronger estimate
\be\lb{r0v'}\begin{aligned}
(I+R_0((\lambda+i0)^2) V)^{-1} &= (1-h(\lambda)) \widecheck N_0(\lambda) + h(\lambda) \widecheck N_1(\lambda) (I-\frac{1 \otimes |V|}{\|V\|_{L^1_x}}) + \\
&+ \frac {h(\lambda)}{\frac i 4 \sgn \lambda - \frac 1 {2\pi} \log|\lambda|} \widecheck N_2(\lambda) (1 \otimes |V|),
\end{aligned}\ee
where $N_0, N_1, N_2 \in \U_{(1+\log_+|x|)L^\infty_x}$ and $h$ is a cutoff function.
\end{lemma}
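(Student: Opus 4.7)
The plan is to build $\widehat{R_V}$ (and the related $\widehat{(I+R_0 V)^{-1}}$) out of the free pieces of Lemma \ref{free_resolvent} via the symmetric resolvent identity (\ref{sym_id}) and the asymmetric identity (\ref{assym_id}). Since $R_0$ is already understood, everything reduces to controlling $(U+T(\lambda))^{-1}$ as a function of $\lambda$ in a Fourier algebra of $\B(L^2)$-valued kernels, after which one convolves with factors of $v R_0$ on either side to land in the claimed mapping spaces. High frequencies ($|\lambda|\gtrsim 1$) will be handled by a standard Wiener-algebra argument exploiting Lemma \ref{invertibility}, whereas the genuine obstacle is the low-frequency region, where the free resolvent carries the logarithmic singularity $h(\lambda)(\tfrac{i}{4}\sgn\lambda-\tfrac{1}{2\pi}\log|\lambda|)$.

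\textbf{High-frequency Wiener step.} Write $U+T(\lambda)=U+vR_0((\lambda+i0)^2)v$ and, using Lemma \ref{free_resolvent}, split $T(\lambda)=v\widecheck L(\lambda)v+h(\lambda)c(\lambda)\,v\otimes v$ with $c(\lambda):=\tfrac{i}{4}\sgn\lambda-\tfrac{1}{2\pi}\log|\lambda|$. Cut off the singular piece with $(1-h(\lambda))$. The kernel of $vLv$ lies in $\U_2$ by condition (\ref{cond_Wiener})(ii) combined with the hypothesis $(1+\log_+|x|)^2V\in L^1_x$ (which yields $v\in(1+\log_+|x|)^{-1}L^2_x$), and (\ref{cond_Wiener})(i) provides the continuity needed to run a Wiener-type inversion. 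Lemma \ref{invertibility} ensures pointwise invertibility of $U+T(\lambda)$ on the support of $1-h$, so a standard localization/partition-of-unity argument combined with the Wiener algebra structure of $\U_2$ gives $(1-h(\lambda))(U+T(\lambda))^{-1}\in\widehat{\U_2}$, modulo the constant $U$.

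\textbf{Low-frequency Feshbach step.} This is the main obstacle. Using the formula $\widecheck L(0)(x,y)=-\tfrac{1}{2\pi}\log|x-y|+C+O(\epsilon)$ from Lemma \ref{free_resolvent}, write
\[
U+T(\lambda)=\bigl(U+vG_0 v\bigr)+v\bigl(\widecheck L(\lambda)-\widecheck L(0)\bigr)v+\bigl(h(\lambda)c(\lambda)+C+O(\epsilon)\bigr)\|V\|_{L^1}P,
\]
with $P=\|V\|_{L^1}^{-1}\langle\cdot,v\rangle v$ and $Q=I-P$. The smoothness estimate (\ref{smoothness}) places $(\widecheck L(\lambda)-\widecheck L(0))h(\lambda)\log|\lambda|$ and the $\sgn$-analogue in the relevant Fourier algebra, so the ``regular'' perturbation $v(\widecheck L(\lambda)-\widecheck L(0))v$ is negligible at the Fourier-algebra level after multiplication by suitable cutoffs. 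Decompose into $2\times 2$ blocks with respect to $P$ and $Q$: the $QQ$-block is, to leading order, $Q(U+vG_0v)Q+o(1)$, invertible on $QL^2$ by the regular-point hypothesis (Definition \ref{regular_point}); the $PP$-block acquires the dominant scalar $h(\lambda)c(\lambda)\|V\|_{L^1}$. Apply the Schur--Feshbach formula: the $QQ$-inversion is smooth in $\lambda$ and yields a contribution in $\widehat{\U_2}$, while the scalar Schur complement in the $P$-direction has the form $h(\lambda)c(\lambda)\|V\|_{L^1}+(\text{smooth})$, so its reciprocal contributes the isolated factor $1/(\tfrac{i}{4}\sgn\lambda-\tfrac{1}{2\pi}\log|\lambda|)$ appearing in (\ref{r0v'}), concentrated on the rank-one piece $v\otimes v$ (i.e.\ the $1\otimes|V|$ tensor after sandwiching with $vR_0$).

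\textbf{Assembly.} Feeding the two regimes into (\ref{assym_id}) gives (\ref{r0v'}): the $(1-h)$ part produces the $\widecheck N_0$ term; the smooth $QQ$-contribution near zero produces $\widecheck N_1(I-\tfrac{1\otimes|V|}{\|V\|_{L^1}})$; and the $P$-block Schur complement produces the last term with denominator $\tfrac{i}{4}\sgn\lambda-\tfrac{1}{2\pi}\log|\lambda|$. The mapping properties in $\U_{(1+\log_+|x|)L^\infty_x}$ follow because multiplication by $v$ on the right turns any $L^2_x$-bound into an $L^1_x\to\cdot$ bound (via $v\in(1+\log_+|x|)^{-1}L^2_x$), and convolving with $\widecheck L$ from Lemma \ref{free_resolvent} produces the stated logarithmic weights on both sides. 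From this, (\ref{r0v}) is immediate, and plugging into the symmetric identity (\ref{sym_id}) gives (\ref{rv}), where the rank-one factor $1\otimes|V|$ is absorbed by the second factor of $R_0$ and thus does not spoil the claimed spaces. The support property in $[0,\infty)$ is inherited from the support property of $\widecheck L$ (equivalently, outgoing/retarded sign convention in the Hankel formulas (\ref{hankel1})--(\ref{hankel2})), which is preserved by each of the operations above.
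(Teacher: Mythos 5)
Your architecture is the same as the paper's (symmetric/asymmetric resolvent identities, Wiener inversion of $(U+T(\lambda))^{-1}$ in $\U_2$ at nonzero energies, Feshbach decomposition in $P,Q$ at zero energy with the regular-point hypothesis, and the scalar Schur complement producing the $\bigl(\frac i4\sgn\lambda-\frac1{2\pi}\log|\lambda|\bigr)^{-1}$ factor), but the sketch skips precisely the two places where the real work happens. First, inverting the Schur complement: you write it as $h(\lambda)c(\lambda)\|V\|_{L^1}+(\text{smooth})$ and conclude "so its reciprocal contributes the isolated factor." That "so" is the gap. The smooth part $f(\lambda)$ is only \emph{bounded} in the Fourier algebra, not small, so a plain Neumann series for $(f+c)^{-1}$ does not converge in $\widehat{L^1}_\lambda$ by size alone. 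The paper's mechanism is quantitative: stationary-phase bounds showing $\|h(\lambda/\epsilon)c(\lambda)^{-n}\|_{\widehat{L^1}_\lambda}\les|\log\epsilon|^{-n}$ (up to lower-order corrections), so that the series $\sum_n(-1)^nf^n/c^{n+1}$ converges for small $\epsilon$ because each division by $c$ gains a factor $|\log\epsilon|^{-1}$ that beats the geometric growth of $\|f^n\|$. Without some such gain your expansion of $\tilde C^{-1}$ is unjustified, and with it stands or falls the whole low-energy part of (\ref{r0v'}) and (\ref{rv}).

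Second, the assembly. For the $t\geq1$ part of (\ref{rv}) you cannot simply say the rank-one factor "is absorbed by the second factor of $R_0$": the standalone free term in (\ref{sym_id}) contains $h(\lambda)c(\lambda)\,1\otimes1$, whose Fourier transform behaves like $\eta(t)t^{-1}$ and is \emph{not} integrable on $[1,\infty)$, so (\ref{rv}) requires an exact cancellation of this singular rank-one piece against the leading term of the Duhamel correction (equivalently, in the route through (\ref{r0v'})$\cdot R_0$, one must use that $(I-\frac{1\otimes|V|}{\|V\|_{L^1_x}})$ annihilates $1\otimes1$ and that the $c(\lambda)$ from $R_0$ cancels the $c(\lambda)^{-1}$ in the last term). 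Naming this cancellation is essential; without it the claimed membership in $\U_{(1+\log_+|x|)^{-1}L^1_x,(1+\log_+|x|)L^\infty_x}$ is false term by term. Finally, your support argument is not correct as stated: the individual pieces of the decomposition ($\widecheck L$, the $\widehat{\tilde g}$ correction, and the frequency-localized Neumann series obtained by multiplying with cutoffs $h(\frac{\lambda-\lambda_0}\epsilon)$, i.e.\ convolving in $t$ with $\epsilon\widehat h(\epsilon t)$) are not individually supported in $[0,\infty)$, so support is not "preserved by each of the operations above." The paper instead deduces $\supp\subset[0,\infty)$ from boundedness of $(U+T(\lambda))^{-1}$ in the upper half-plane together with integrability of its Fourier transform (a Paley--Wiener-type argument), and you need some argument of that kind.
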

\begin{proof}
Take an even cutoff function $h$ such that $0 \leq h \leq 1$, $h=0$ outside $(-2, 2)$, and $h=1$ on $[-1, 1]$.

Note that $v L v \in \U_2$, where $v=|V|^{1/2}$ and $L$ is given by Lemma \ref{free_resolvent}.

We consider separately the cases of high, medium, and low energies.

High energies: We want to show that $(1-h(\lambda/R))(U+T(\lambda))^{-1}$ has a Fourier transform in $\U_2$ for sufficiently large $R$. To begin with, one can express the Fourier transform of $(1-h(\lambda/R))T(\lambda)$ as
$$
\widehat T - R \widehat h(R \cdot) \ast \widehat T = \int_{-\infty}^\infty  R \widehat h(R s) (\widehat T(t) - \widehat T(t-s)) \dd s.
$$
Due to the condition (\ref{cond_Wiener}i), as well as due to the fact $(1-h(\lambda/R)) h(\lambda)(\frac i 4 \sgn \lambda - \frac 1 {2\pi} \log \lambda)$ is a Schwartz-class function, we obtain that this converges to zero in $\U_2$ as $R \to \infty$. We then see that
$$\begin{aligned}
(1-h(\lambda/2R))(U+T(\lambda))^{-1} &= (1-h(\lambda/2R))(U+(1-h(\lambda/R))T(\lambda))^{-1} \\
&= (1-h(\lambda/2R)) \sum_{n=0}^\infty U \big((1-h(\lambda/R))T(\lambda) U\big)^n,
\end{aligned}$$
which converges for sufficiently large $R$ in $\widehat {\U_2}$.

Medium energies: Let $S_\epsilon:=h(\frac{\lambda-\lambda_0} \epsilon)(T(\lambda)-T(\lambda_0))$. A straightforward argument based on condition (\ref{cond_Wiener}ii), as well as on the fact that $h(\frac{\lambda-\lambda_0} \epsilon) h(\lambda)(\frac i 4 \sgn \lambda - \frac 1 {2\pi} \log \lambda)$ is a Schwartz-class function, shows that $\lim_{\epsilon \to 0} \|\widehat {S_\epsilon}\|_{\U_2} = 0$. Then
$$\begin{aligned}
h(\frac{\lambda-\lambda_0} \epsilon) (U+T(\lambda))^{-1} &= h(\frac{\lambda-\lambda_0} \epsilon) (U+T(\lambda_0) + h(\frac{\lambda-\lambda_0} {2\epsilon})(T(\lambda)-T(\lambda_0)))^{-1} \\
&= h(\frac{\lambda-\lambda_0} \epsilon) \sum_{n=0}^\infty (-1)^n (U+T(\lambda_0))^{-1} (S_{2\epsilon} (U+T(\lambda_0))^{-1})^n,
\end{aligned}$$
where $(U+T(\lambda_0))^{-1}$ exists and is a Hilbert-Schmidt operator by Lemma \ref{invertibility}. The series converges in $\widehat {\U_2}$ for small enough $\epsilon$.

Zero energy: This case requires separate treatment and in particular not all the terms obtained in the expansion will be in $\widehat {\U_2}$. We make use of the following lemma (the Fehsbach formula, see for example Lemma 2.3 in \cite{jensen}):
\begin{lemma}\lb{fehsbach} Let $X=X_0\oplus X_1$ be a direct sum decomposition of a vector space $X$. Suppose that a linear operator $L \in \B(X)$ is written in the form
$$
L=\begin{pmatrix} L_{00} & L_{01} \\ L_{10} & L_{11} \end{pmatrix}
$$
in this decomposition and that $L_{00}^{-1}$ exists. Let $C=L_{11} - L_{10} L_{00}^{-1} L_{01}$. Then $L^{-1}$ exists if and only if $C^{-1}$ exists and is given by
$$
L^{-1} = \begin{pmatrix}
L_{00}^{-1} + L_{00}^{-1} L_{01} C^{-1} L_{10} L_{00}^{-1} &
- L_{00}^{-1} L_{01} C^{-1} \\
- C^{-1} L_{10} L_{00}^{-1} & C^{-1}
\end{pmatrix}.
$$
\end{lemma}

We assume that zero is a regular point for $H=-\Delta+V$ and write
$$
U+T(\lambda)=\begin{pmatrix} Q (U + T(\lambda)) Q & Q(U+T(\lambda))P \\ P(U+T(\lambda))Q & P(U+T(\lambda))P \end{pmatrix} =: \begin{pmatrix} L_{00}(\lambda) & L_{01}(\lambda) \\ L_{10}(\lambda) & L_{11}(\lambda) \end{pmatrix}.
$$
First, we observe that $L_{00}(0)$ is invertible on $QL^2_x$. Indeed, as $\lambda$ approaches zero $U+T(\lambda)$ has the form, by Lemma \ref{free_resolvent},
$$\begin{aligned}
&U + T(\lambda) = U + v(x)\widecheck L(\lambda)(x, y)v(y) + h(\lambda) (\frac i 4 \sgn \lambda - \frac 1 {2\pi} \log |\lambda|) v \otimes v\\
&=U + v(x)(-\frac 1 {2\pi} \log|x-y|)v(y) + o_{L^2}(1) + (C + h(\lambda)(\frac i 4 \sgn \lambda - \frac 1 {2\pi} \log |\lambda|)) v \otimes v + O_{L^2}(\epsilon),
\end{aligned}$$
so
$$\begin{aligned}
Q (U+T(\lambda)) Q &= Q (U + v(x)\widecheck L(\lambda)(x, y)v(y)) Q\\
&= Q(U + v(x)(-\frac 1 {2\pi} \log|x-y|)v(y))Q + o_{L^2}(1) + O_{L^2}(\epsilon).
\end{aligned}$$
By choosing $\epsilon$ sufficiently small in the decomposition in Lemma \ref{free_resolvent}, we obtain in fact that $L_{00}(\lambda)$ is invertible for all sufficiently small $\lambda$, due to our regularity assumption at $\lambda=0$. Moreover, for sufficiently small $\epsilon$, $h(\frac \lambda \epsilon) L_{00}^{-1}(\lambda) \in \widehat {\U_2}$. Indeed, letting $\tilde S_\epsilon:=
h(\frac \lambda \epsilon)(L_{00}(\lambda)-L_{00}(0))$, we note that $\lim_{\epsilon \to 0} \|\tilde S_{\epsilon}\|_{\widehat {\U_2}} = 0$, so for $\epsilon$ small the power series expansion
$$\begin{aligned}
h(\frac \lambda \epsilon) L_{00}^{-1}(\lambda) &= h(\frac \lambda \epsilon)(L_{00}(0) + h(\frac \lambda {2\epsilon})(L_{00}(\lambda)-L_{00}(0)))^{-1} \\
&= h(\frac \lambda \epsilon) \sum_{n=0}^\infty (-1)^n L_{00}^{-1}(0) (\tilde S_{2\epsilon} L_{00}^{-1}(0))^n
\end{aligned}$$
converges in $\widehat {\U_2}$.

Also note that $\widehat {L_{01}}$ and $\widehat {L_{10}}$ are in $\widehat{\U_2}$ since they contain a $Q$ projection, which eliminates the singular term, while for $L$ given by Lemma \ref{free_resolvent}
$$
L_{11} = P v \widecheck L(\lambda) v P + h(\lambda)(\frac i 4 \sgn \lambda - \frac 1 {2\pi} \log |\lambda|) v \otimes v.
$$
Consequently
$$
C(\lambda) = (L_{11}(\lambda)-L_{10}(\lambda)L_{00}^{-1}(\lambda)L_{01}(\lambda)) = (f(\lambda) + h(\lambda)(\frac i 4 \sgn \lambda - \frac 1 {2\pi} \log |\lambda|)) v \otimes v,
$$
where $f(\lambda) \in \widehat{L^1}_\lambda$ is bounded near $\lambda =0$ and $h(\lambda)$ is a cutoff function. Since the second term dominates the first, for small $\epsilon$ there exists
$$
h(\frac \lambda \epsilon) C(\lambda)^{-1} = \frac {h(\frac\lambda\epsilon)}{f(\lambda)+\frac i 4 \sgn \lambda - \frac 1 {2\pi} \log |\lambda|} \frac 1 {\|V\|_{L^1_x}^2} v\otimes v.
$$
We expand
\be\lb{expansion}
\frac {h(\frac \lambda \epsilon)} {f(\lambda)+\frac i 4 \sgn \lambda - \frac 1 {2\pi} \log |\lambda|} = \sum_{n=0}^\infty \frac {(-1)^n h(\frac \lambda \epsilon) f^n(\lambda)}{(\frac i 4 \sgn \lambda - \frac 1 {2\pi} \log |\lambda|)^{n+1}}.
\ee
Take $\epsilon>0$; then for $n \geq 1$ $\frac {h(\frac \lambda \epsilon)}{(\frac i 4 \sgn \lambda - \frac 1 {2\pi} \log |\lambda|)^n} \in \widehat{L^1}_\lambda$. The first step is showing that $\frac {h(\frac \lambda \epsilon)}{\log^n|\lambda|} \in \widehat{L^1}_\lambda$. Indeed, on one hand
$$
\left(\frac{h(\frac \lambda \epsilon)}{\log^n|\lambda|}\right)^\wedge(t) \les \epsilon|\log\epsilon|^{-n}.
$$
On the other hand, to compute the decay of the Fourier transform, for $1/\epsilon \les t$ we use stationary phase:
$$\begin{aligned}
\left|\int_{-\infty}^\infty \frac {h(\frac \lambda \epsilon)}{\log^n|\lambda|} e^{-it\lambda} \dd \lambda\right| &= \left|\int_0^\infty \frac {h(\frac \lambda \epsilon)}{\log^n\lambda} \cos(t\lambda) \dd\lambda\right| \\
&= \frac 1 t \left|\int_0^\infty \left(\frac {h'(\frac \lambda \epsilon)} {\epsilon\log^n\lambda} - \frac{nh(\frac \lambda\epsilon)}{\lambda\log^{n+1}\lambda}\right) \sin(t\lambda) \dd \lambda\right| \\
&\les \frac 1 {t^2} \left(\frac 1 {\epsilon |\log\epsilon|^n} + \frac n {\epsilon |\log \epsilon|^{n+1}}\right) + \frac n{t \log^{n+1} t}.
\end{aligned}$$
Here we used the fact that $\sin (t\lambda) \leq t \lambda$, hence the symmetry of the integrand; note that $\frac {h(\frac \lambda \epsilon)\sgn\lambda}{\log|\lambda|} \not \in \widehat{L^1}_\lambda$. In conclusion we get that
\be\lb{L1}
\left\|\frac {h(\frac \lambda \epsilon)}{\log^n|\lambda|}\right\|_{\widehat{L^1}_\lambda} \les \frac 1 {|\log \epsilon|^n} + \frac n {|\log \epsilon|^{n+1}}.
\ee

Next, again by the method of stationary phase we obtain that for fixed $c_1$ and $c_2$ and $n \geq 2$
\be\lb{L1'}\begin{aligned}
\left\|\frac {\one_{[0, \infty)}(\lambda)h(\frac \lambda \epsilon)}{(\log|\lambda|+c_1)^{n-k}(\log|\lambda|+c_2|)^k}\right\|_{\widehat{L^1}_\lambda} + \left\|\frac {\one_{(-\infty, 0]}(\lambda)h(\frac \lambda \epsilon)}{(\log|\lambda|+c_1)^{n-k}(\log|\lambda|+c_2|)^k}\right\|_{\widehat{L^1}_\lambda} \les \\
\les \frac 1 {n|\log\epsilon|^{n-1}} + \frac 1 {|\log \epsilon|^n} + \frac n {|\log \epsilon|^{n+1}}.
\end{aligned}\ee
Combining (\ref{L1}) and (\ref{L1'}) leads to
\be\lb{log}
\left\|\frac{h(\frac\lambda \epsilon)}{(\frac i 4 \sgn \lambda - \frac 1 {2\pi} \log |\lambda|)^n}\right\|_{\widehat {L^1}_\lambda} \les \frac 1 {|\log \epsilon|^n} + \frac n {|\log \epsilon|^{n+1}} + \frac {n^2} {|\log \epsilon|^{n+2}}.
\ee
Since
$$
\|h(\frac \lambda {2\epsilon}) f^n(\lambda)\|_{\widehat{L^1}_\lambda} \les C^n \|h(\frac \lambda {4\epsilon}) f(\lambda)\|^n_{\widehat{L^1}_\lambda},
$$
this means that for sufficiently small $\epsilon$ the tail of (\ref{expansion}) converges in $\widehat {L^1}_\lambda$. Therefore
\be\lb{C}\begin{aligned}
h(\frac \lambda \epsilon) C(\lambda)^{-1} &= \frac {h(\frac\lambda \epsilon)}{\frac i 4 \sgn \lambda-\frac 1 {2\pi} \log |\lambda|} \frac 1 {\|V\|_{L^1_x}^2} v \otimes v + \frac {h(\frac\lambda \epsilon) \widecheck k_{11}(\lambda)}{(\frac i 4 \sgn \lambda-\frac 1 {2\pi} \log |\lambda|)^2} P,
\end{aligned}\ee
where $k_{11} P \in \U_2$ (and all powers greater than two have been absorbed into the quadratic term). Denoting in the Fehsbach formula
$$
h(\frac\lambda\epsilon)(U+T(\lambda))^{-1} := \begin{pmatrix} K_{00}(\lambda) & K_{01}(\lambda) \\ K_{10}(\lambda) & K_{11}(\lambda),
\end{pmatrix}
$$
we have obtained an expansion for $K_{11}(\lambda)$. The other matrix entries can then be written as follows: to begin with, by (\ref{C}) and (\ref{log}) $K_{00}(\lambda) = \widecheck k_{00}(\lambda)$, where $k_{00} \in \U_2$. Similarly we obtain that
$$
K_{01}(\lambda) = \frac {h(\frac \lambda \epsilon)}{\frac i 4 \sgn \lambda - \frac 1 {2\pi} \log |\lambda|} \widecheck k_{01}(\lambda)
$$
and
$$
K_{10}(\lambda) = \frac {h(\frac \lambda \epsilon)}{\frac i 4 \sgn \lambda - \frac 1 {2\pi} \log |\lambda|} \widecheck k_{10}(\lambda),
$$
with all the coefficients in $\U_2$. In conclusion, $(U+T(\lambda))^{-1}$ has the form
\be\lb{ut}\begin{aligned}
h(\frac \lambda \epsilon) (U+T(\lambda))^{-1} &= \left(\frac {h(\frac\lambda \epsilon)}{\frac i 4 \sgn \lambda-\frac 1 {2\pi} \log |\lambda| \|V\|_{L^1_x}}+ \frac {h(\frac\lambda \epsilon)\widecheck k_{11}(\lambda)}{(\frac i 4 \sgn \lambda-\frac 1 {2\pi} \log |\lambda|)^2}\right)  P + \\
&+ Q \frac {h(\frac \lambda \epsilon) \widecheck k_{01}(\lambda)}{\frac i 4 \sgn \lambda - \frac 1 {2\pi} \log |\lambda|} P + P \frac {h(\frac \lambda \epsilon) \widecheck k_{10}(\lambda)}{\frac i 4 \sgn \lambda - \frac 1 {2\pi} \log |\lambda|} Q + Q \widecheck k_{00}(\lambda) Q.
\end{aligned}\ee

By Lemma \ref{free_resolvent} we get that
\be\lb{r00v}
R_0((\lambda+i0)^2) v = \widecheck L_1(\lambda) + h(\lambda)(\frac i 4 \sgn \lambda - \frac 1 {2\pi} \log |\lambda|) 1 \otimes v.
\ee
Here $L_1 \in \U_{L^2_x, (1+\log_+|x|) L^\infty_x}$ and $\one_{[0, 1]}(t) L_1 \in \U_{L^2_x, L^\infty_x}$. Likewise
\be\lb{vr}
v R_0((\lambda+i0)^2) = \widecheck L_2(\lambda) + h(\lambda)(\frac i 4 \sgn \lambda - \frac 1 {2\pi} \log |\lambda|) v \otimes 1,
\ee
where $L_2 \in \U_{(1+\log_+|x|)^{-1} L^1_x, L^2_x}$ and $\one_{[0, 1]}(t) L_2 \in \U_{L^1_x, L^2_x}$.

Expanding out the second term in (\ref{sym_id}) in accordance with (\ref{ut}), (\ref{r00v}), and (\ref{vr}) and taking into account the fact that $Qv=0$, we obtain that at low energy
\be\lb{technical_formula}\begin{aligned}
&h(\frac\lambda\epsilon) R_0((\lambda+i0)^2) v (U+T(\lambda))^{-1} v R_0((\lambda+i0)^2) = \\
&= h(\frac\lambda\epsilon)(\frac i 4 \sgn \lambda - \frac 1 {2\pi} \log|\lambda|) 1 \otimes 1 + \widecheck K(\lambda),
\end{aligned}\ee
where $K \in \U_{(1+\log_+|x|)^{-1} L^1_x, (1+\log_+|x|) L^\infty_x}$.

Using a partition of unity adapted to our three cases (high, medium, and low energy), following our previous analysis we can extend (\ref{technical_formula}) to all energies. By Lemma \ref{free_resolvent} the main term $h(\frac\lambda\epsilon)(\frac i 4 \sgn \lambda - \frac 1 {2\pi} \log|\lambda|) 1 \otimes 1$ cancels in (\ref{sym_id}) and we obtain the third claimed property for $R_V$ in (\ref{rv}), namely that $\one_{[1, \infty)}(t)  (R_V(\lambda+i0)^2)^\wedge(t) \in \U_{(1+\log_+|x|)^{-1} L^1_x, (1+\log_+|x|) L^\infty_x}$.

In addition, it is easy to show (see Lemma 2.5 in \cite{bec_new_schroedinger}) that, because $(U+T(\lambda))^{-1}$ is bounded in the upper half-plane and its Fourier transform is integrable, its Fourier transform is supported on $[0, \infty)$. Together with the above analysis, this proves the first two parts of (\ref{rv}).

The statement (\ref{r0v}) also follows from (\ref{assym_id}), (\ref{ut}), and Lemma \ref{free_resolvent}. By keeping more carefully track of the factors and projections involved --- note that
$$
Qv = v (I-\frac {1 \otimes |V|}{\|V\|_{L^1_x}})
$$
--- we obtain (\ref{r0v'}) instead.
\end{proof}

We also state a lemma concerning the equivalence of certain norms defined using $H$ and the usual Sobolev norms (see Lemma 13 in \cite{becgol}):
\begin{lemma}\lb{norm_equivalence} Assume that $V \in (1+\log_+ |x|)^{-2k} L^1_x$ ($k \geq 1$), for some $q >1$ $V \in L^q_{loc}$ and $\lim_{R \to \infty} R^{1-1/q} \|V\|_{L^q(|x| \in [R, 2R])} = 0$, and that zero is not a singular value for $H=-\Delta+V$. Then
\be\lb{1st_conclusion}
\|Hf\|_{(1+\log_+|x|)^{-k} L^1_x} \sim \|-\Delta f\|_{(1+\log_+|x|)^{-k} L^1_x}.
\ee
Moreover, assume $V \in L^q_x$ for $q>1$. Then
\be\lb{2nd_conclusion}
\||H|^{s/2} f\|_{L^2_x} \les \|f\|_{H^s_x}
\ee
for $0 \leq s \leq 1$. Finally,
\be\lb{3rd_conclusion}
\||H|^{-s/2} f\|_{L^2_x} \les \|f\|_{\dot H^{-s-}_x} + \|f\|_{\dot H^{-s+}_x}
\ee
for $0 \leq s \leq 1$.

By duality we also obtain, for $0 \leq s \leq 1$,
$$
\||H|^{s/2} f\|_{H^{-s}_x} \les \|f\|_{L^2_x},\ \|f\|_{\dot H^{s+}_x+\dot H^{s-}_x} \les \||H|^{s/2} f\|_{L^2_x}.
$$
\end{lemma}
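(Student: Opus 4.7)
My plan is to handle the four assertions in the order (\ref{2nd_conclusion}), (\ref{3rd_conclusion}), and (\ref{1st_conclusion}); the dual statements then follow by pairing. For (\ref{2nd_conclusion}), I first treat the endpoint $s=1$ via the quadratic form identity
$$\|H^{1/2}f\|_{L^2_x}^2 = \langle Hf, f\rangle = \|\nabla f\|_{L^2_x}^2 + \int_{\R^2} V|f|^2 \dd x,$$
and control the potential term by H\"older and Sobolev: $\left|\int V|f|^2\dd x\right| \leq \|V\|_{L^q_x}\|f\|_{L^{2q'}_x}^2 \les \|V\|_{L^q_x}\|f\|_{H^1_x}^2$, where the last step uses the two-dimensional embedding $H^1_x \hookrightarrow L^p_x$ for every $p<\infty$. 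The $s=0$ case is trivial, and intermediate $s\in(0,1)$ follows by Stein complex interpolation applied to the analytic family $z \mapsto (H+M)^{z/2}$ for $M>0$ chosen large enough that $H+M$ is strictly positive; the shift is then removed by functional calculus away from zero and by the regularity assumption at zero.

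For (\ref{3rd_conclusion}), I would invoke Stone's formula for the continuous spectrum to write
$$\||H|^{-s/2}P_cf\|_{L^2_x}^2 \sim \int_0^\infty \mu^{1-2s}\im\langle R_V((\mu+i0)^2) f, f\rangle\dd\mu.$$
The free-resolvent contribution yields a multiple of $\|f\|_{\dot H^{-s}_x}^2$ by passage to polar coordinates $\xi=\mu\omega$, and the scattering contribution from the symmetric resolvent identity (\ref{sym_id}) is dominated using the decomposition of $(U+T(\mu))^{-1}$ supplied by Lemma \ref{technical_lemma}. The delicate point is the low-energy piece: the singular factor $(\frac{i}{4}\sgn\mu - \frac{1}{2\pi}\log|\mu|)^{-1}$ must be split into a low-frequency part ($|\mu|<1$, absorbed by $\dot H^{-s-}_x$) and a high-frequency part ($|\mu|>1$, absorbed by $\dot H^{-s+}_x$), which is precisely why the conclusion requires a sum of two homogeneous norms rather than a single $\dot H^{-s}_x$ norm.

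For (\ref{1st_conclusion}), both directions reduce to estimating $\|Vf\|_X$ with $X:=(1+\log_+|x|)^{-k}L^1_x$. Writing $f = G_0\ast(-\Delta f)$ with $G_0(x,y)=-\frac{1}{2\pi}\log|x-y|$ and applying Fubini yields
$$\|Vf\|_X \leq \int|(-\Delta f)(y)| \left(\int |V(x)|\bigl|\log|x-y|\bigr|(1+\log_+|x|)^k \dd x\right)\dd y;$$
splitting the inner integral across the regions $|x|\ll|y|$, $|x|\sim|y|$, $|x|\gg|y|$ and using $V\in(1+\log_+|x|)^{-2k}L^1_x$ together with $k\geq 1$ shows that the inner integral is $\les(1+\log_+|y|)^k$. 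The reverse direction uses the kernel representation of $H^{-1}P_c$ from Lemma \ref{technical_lemma}, whose main term coincides with $G_0$ modulo integrable corrections satisfying the same weighted bound. The main obstacle throughout is (\ref{3rd_conclusion}), where isolating two Sobolev regularities from the single logarithmic singularity at zero energy is the one step that genuinely requires the two-dimensional resolvent analysis developed earlier in the paper.
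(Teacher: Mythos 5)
Your treatment of (\ref{1st_conclusion}) and (\ref{2nd_conclusion}) is essentially sound and close in substance to the paper's: the forward direction of (\ref{1st_conclusion}) via the kernel bound $\int |V(x)|\,|\log|x-y||\,(1+\log_+|x|)^k \dd x \les (1+\log_+|y|)^k$ (using $k+1\le 2k$ and $V\in\mc K_1$) is exactly the boundedness of $I+VR_0(0)$ on $(1+\log_+|x|)^{-k}L^1_x$, and your shifted family $(H+M)^{z/2}$ is a legitimate substitute for the paper's form-boundedness argument. Two imprecisions should still be fixed: $\|H^{1/2}f\|_{L^2}^2=\langle Hf,f\rangle$ is not $\||H|^{1/2}f\|^2$ when $H$ has negative eigenvalues (the paper writes $|H|=H+2|H|P_p$ and uses that the eigenfunctions are in $H^1$ with exponential decay; your $H+M$ device absorbs this, but the opening identity as written does not), and in the reverse direction of (\ref{1st_conclusion}) you should use the full boundary resolvent $R_V(0)$ (or, as the paper does, the identity $(I+VR_0(0))^{-1}=I-v(U+T(0))^{-1}vR_0(0)$ together with $L^2$-boundedness of $(U+T(0))^{-1}$), not $H^{-1}P_c$, which omits the point-spectrum component of $f$; also note Lemma \ref{technical_lemma} is stated with weight exponent $1$, so for $k>1$ you must rerun it with the stronger weight, which the operator identity route avoids.

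The genuine gap is in (\ref{3rd_conclusion}), which you correctly identify as the crux but then argue incorrectly. The factor $(\frac i4\sgn\mu-\frac1{2\pi}\log|\mu|)^{-1}$ appears only multiplied by a cutoff supported near $\mu=0$, and on that support it is \emph{bounded} --- indeed it tends to zero as $\mu\to 0$ --- so there is no ``high-frequency part $|\mu|>1$'' to split off, and this factor is not the reason the conclusion involves two Sobolev exponents. The true source of the $\pm$ is the two-dimensional failure, at both ends of frequency space, of $L^2$ bounds for kernels of the type $|\nabla|^{-1}$: the paper proves the $s=1$ case by a $TT^*$ reduction to the $L^2$-boundedness of $(|\nabla|^{1+\epsilon}+|\nabla|^{1-\epsilon})^{-1}|\nabla|^2 H^{-1}|\nabla|^2(|\nabla|^{1+\epsilon}+|\nabla|^{1-\epsilon})^{-1}$, which follows from the zero-energy identity (\ref{sym_id}), the fact that $(|\nabla|^{1+\epsilon}+|\nabla|^{1-\epsilon})^{-1}$ is convolution with an $L^2$ function (this is exactly where both $+\epsilon$ and $-\epsilon$ are needed), and the $L^2$-boundedness of $(U+T(0))^{-1}$; intermediate $s$ then follows by interpolation. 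Your Stone-formula route is not unreasonable in principle, but to carry it out you would need, beyond uniform-in-$\mu$ control of $(U+T(\mu))^{-1}$, a weighted smoothing estimate of the form $\int_0^\infty \mu^{1-2s}\|vR_0((\mu+i0)^2)f\|_{L^2_x}^2 \dd\mu \les (\|f\|_{\dot H^{-s-}}+\|f\|_{\dot H^{-s+}})^2$, which Lemma \ref{technical_lemma} does not supply and which you never address; as it stands, the step ``the scattering contribution is dominated using the decomposition of $(U+T(\mu))^{-1}$'' is an assertion, not an argument, and the explanation offered for the two norms is wrong.
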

These estimates' lack of sharpness is tied to the failure of the endpoint Sobolev embedding $\dot H^1 \subset L^\infty$ in dimension two.
\begin{proof} The first statement (\ref{1st_conclusion}) is a consequence of the boundedness of $I+VR_0(0)$ and $(I+VR_0(0))^{-1}$ as operators on $(1+\log_+|x|)^{-k}L^1_x$. The latter boundedness follows because
$$
(I+VR_0(0))^{-1} = I - v(U+T(0))^{-1}vR_0(0),
$$
where $vR_0(0) \in B((1+\log_+|x|)^{-1} L^1_x, L^2_x)$ and we proved that $(U+T(0))^{-1}$ is $L^2$-bounded in the course of proving Lemma \ref{technical_lemma}.

For the second statement, note that since $V \in L^q$, $V$ is $(-\Delta+1)$-form bounded. Since the eigenfunctions of $-\Delta + V$ (corresponding to negative eigenvalues) are in $H^1$, due to exponential decay they are in $L^1_x \cap L^p_x$ for any $p<\infty$. Therefore $|H|P_p$ is also $-\Delta+1$-form bounded, so $|H|=-\Delta+V+2|H|P_p$ is $-\Delta+1$-form bounded. Thus $\||H|^{1/2} f\| \les \|f\|_{H^1}$. The second statement (\ref{2nd_conclusion}) follows by interpolation with the $s=0$ case.

The third statement (\ref{3rd_conclusion}) is proved once we show that $\|H^{-1/2} f\|_{L^2_x} \les \|(|\nabla|^{-1-\epsilon} + |\nabla|^{-1+\epsilon}) f\|_{L^2_x}$; the projection on the point spectrum is bounded since eigenstates are in $H^1$. Equivalently we have $\|H^{-1/2} |\nabla|^2 (|\nabla|^{1+\epsilon}+|\nabla|^{1-\epsilon})^{-1} f\|_{L^2_x} \les \|f\|_{L^2_x}$. By the $T T^*$ method it suffices to show that
$$
\|(|\nabla|^{1+\epsilon}+|\nabla|^{1-\epsilon})^{-1} |\nabla|^2 H^{-1} |\nabla|^2 (|\nabla|^{1+\epsilon}+|\nabla|^{1-\epsilon})^{-1} f\|_{L^2_x} \les \|f\|_{L^2_x}.
$$
This follows immediately by using the formula (\ref{sym_id}) for $H^{-1}=R_V(0)$, expressing $(|\nabla|^{1+\epsilon}+|\nabla|^{1-\epsilon})^{-1}$ as a convolution with an $L^2_x$ function, and noting again that $(U+T(0))^{-1}$ is $L^2$-bounded.
\end{proof}

We continue with the proof of the main result of this paper. We prove Theorem \ref{dispersive} and Lemma \ref{lema} together.

\begin{proof}[Proofs of Theorem \ref{dispersive} and Lemma \ref{lema}] By spectral calculus, for $f \in L^2$
$$
\frac{\sin(t\sqrt H) P_c}{\sqrt H} f = \frac 1 {2\pi i} \int_0^\infty \frac {\sin(t\sqrt \lambda)}{\sqrt\lambda} (R_V^+(\lambda)-R_V^-(\lambda)) f \dd \lambda.
$$
Making a change of variable from $\lambda$ to $\lambda^2$, we obtain
\be\lb{sin}
\frac{\sin(t\sqrt H) P_c}{\sqrt H} f = \frac 1 {\pi i} \int_{-\infty}^\infty \sin(t \lambda) R_V((\lambda+i0)^2) f \dd \lambda.
\ee
This is the symmetric part of the Fourier transform of $R_V((\lambda+i0)^2) f$. By Lemma \ref{technical_lemma}, $R_V$ has the properties given by (\ref{rv}). The conclusions (\ref{c1}-\ref{c3}) now follow by the definition of the spaces $\U$.

Concerning the cosine evolution, one likewise has that
\be\lb{cos}\begin{aligned}
\cos(t\sqrt H) P_c &= \frac 1 {\pi i} \int_{-\infty}^\infty \lambda \cos(t\lambda) R_V((\lambda+i0)^2) \dd \lambda \\
&= \frac 1 {\pi i} \int_{-\infty}^\infty \lambda \cos (t\lambda) (I+R_0((\lambda+i0)^2)V)^{-1} R_0((\lambda+i0)^2).
\end{aligned}\ee
Since the Fourier transform of the free resolvent $R_0((\lambda+i0)^2)$ is $\one_{|x-y| \leq t} (t^2-|x-y|^2)^{-1/2} = 2\pi \one_{t \geq 0} \frac {\sin(t\sqrt{-\Delta})}{\sqrt{-\Delta}}$ (see Lemma \ref{free_resolvent}), it follows that the Fourier transform of $\lambda R_0((\lambda+i0)^2)$ is $2\pi i \one_{t \geq 0} \cos(t\sqrt{-\Delta})$. By formula (\ref{est_cos}) we obtain that
$$
\|\cos(t\sqrt{-\Delta}) f\|_{L^\infty_x L^1_t} \les \|\nabla f\|_{L^1_x},
$$
which suffices together with (\ref{r0v}) to prove (\ref{c40}-\ref{c5}).


Next, we prove the inhomogenous Strichartz-type inequality (\ref{c6}). We start from the observation that
$$
\left\|\one_{t \geq r} \frac 1 {\sqrt{t^2-r^2}}\right\|_{L^p_t} = C_p r^{-1+1/p}
$$
for $1<p<2$. In fact, it is also the case that $\one_{t \geq r} \frac 1 {\sqrt{t^2-r^2}} \in L^{1, \infty}_t \cap L^{2, \infty}_t$, with norms equal to $C$, respectively $Cr^{-1/2}$. Also note that $\||x|^{-1+1/p}\|_{L^{2p/(p-1), \infty}_x}< \infty$. Then in the free case
$$\begin{aligned}
\left\|\int_{-\infty}^t \frac{\sin(t-s)\sqrt{-\Delta}}{\sqrt{-\Delta}} F(x, s) \dd s\right\|_{L^{q_1, \sigma}_x L^{r_1, \tilde \sigma}_t} &\les \left\|\int \frac {\one_{t-s \geq |x-y|}} {\sqrt{(t-s)^2-|x-y|^2}} F(y, s) \dd s \dd y\right\|_{L^{q_1, \sigma}_x L^{r_1, \tilde \sigma}_t}\\
&\les \||x|^{-1+1/p} \ast \|F(x, s)\|_{L^{q_1, \sigma}_x L^{r_2, \tilde \sigma}_s} \\
& \les \|F(x, s)\|_{L^{q_2, \sigma}_x L^{r_2, \tilde \sigma}_s},
\end{aligned}$$
where by Young's inequality
\be\lb{Young}
\frac 1 {q_1} = \frac 1 {q_2} + \frac {p-1}{2p} - 1,\ \frac 1 {r_1} = \frac 1 {r_2} + \frac 1 p - 1,
\ee
and $1<p \leq 2$, $1\leq q_1, q_2, r_1, r_2 \leq \infty$, with certain modifications in the endpoint cases. Putting the two relations (\ref{Young})  together one obtains the scaling condition that
\be\lb{scaling_cond}
\frac 2 {q_1} + \frac 1 {r_1} + 2 = \frac 2 {q_2} + \frac 1 {r_2}.
\ee
Also we note the following restriction on the range of admissible exponents: $0<\frac 1 {r_2} - \frac 1 {r_1}\leq \frac 1 2$.

In the perturbed case one also has the Duhamel term
$$
\int_{-\infty}^\infty \sin(t\lambda) R_0((\lambda+i0)^2) v (U+T(\lambda))^{-1} v R_0((\lambda+i0)^2) \dd \lambda.
$$
This corresponds to a sequence of operators
$$\begin{aligned}
L^{q_2, 1}_x L^{r_2, \tilde \sigma}_t \xrightarrow{\one_{t \geq 0} \frac{\sin(t\sqrt{-\Delta})}{\sqrt{-\Delta}}\ast} L^\infty_x L^{r, \tilde \sigma}_t \xrightarrow{v} L^2_x L^{r, \tilde \sigma}_t \xrightarrow{((U+T(\lambda))^{-1})^\wedge(t) \ast} L^2_x L^{r, \tilde \sigma}_t \xrightarrow{v} L^1_x L^{r, \tilde \sigma}_t \\\xrightarrow{\one_{t \geq 0} \frac{\sin(t\sqrt{-\Delta})}{\sqrt{-\Delta}}\ast} L^{q_1, \infty}_x L^{r_1, \tilde \sigma}_t.
\end{aligned}$$
The conditions on the exponents are
$$
\frac 2 {q_2} + \frac 1 {r_2} = \frac 1 r + 2 = \frac 2 {q_1} + \frac 1 {r_1} + 2.
$$
In the endpoint case $q_2=4/3$ and $r_2=2$ one has $L^{r_2, \tilde \sigma}=L^{2, 1}$ instead. Interpolation allows us to replace $L^{q_1, \infty}$ and $L^{q_2, 1}$ by $L^{q_1, \sigma}$ and $L^{q_2, \sigma}$ everywhere except at the endpoints.

We note that this is the same scaling relation as in the free case and the admissible range of exponents is at least as wide as in the free case. This concludes the proof of (\ref{c6}).

For the cosine propagator, taking into account (\ref{est_cos}), we need to consider the effect of convolving with $g(r, t):=\one_{[0, r]}(t) \frac 1 r + \one_{[r, \infty)}(t) \frac r {(t+\sqrt{t^2-r^2})\sqrt{t^2-r^2}}$. Note that when $1\leq p<2$ $\|g(r, t)\|_{L^p_t} = C_p r^{-1+1/p}$ and $\|g(r, t)\|_{L^{2, \infty}} = C r^{-1/2}$. The proof of (\ref{c7}) proceeds from here in the same manner as that of (\ref{c6}) for the sine propagator.

We now turn to the proof of the homogenous Strichartz estimates, starting with (\ref{c8}). Following \cite{becgol}, we use the $T T^*$ method, noting that
\be\lb{sincos}
\frac {\sin(t\sqrt H)P_c}{\sqrt H} \frac {\sin(s\sqrt H)P_c}{\sqrt H} = \frac 1 2 \left(\frac{\cos((t-s)\sqrt H)P_c} H - \frac{\cos((t+s)\sqrt H)P_c} H\right).
\ee
By the spectral representation formula we obtain that
\be\lb{cosh}
\frac{\cos(t\sqrt H) P_c} H = \frac 1 {\pi i} \int_{-\infty}^\infty \cos(t\lambda) (I+R_0((\lambda+i0)^2)V)^{-1} \frac {R_0((\lambda+i0)^2)}{\lambda} \dd \lambda.
\ee
The Fourier transform $(\frac{R_0((\lambda+i0)^2)}{\lambda})^\wedge(t)$ is zero for $t <0$ by contour integration and, modulo a contour integral equal to zero, is the same for $t \geq 0$ as
$$
2 \int_{-\infty}^\infty \cos(t\lambda) \frac {R_0((\lambda+i0)^2)}\lambda \dd \lambda = \int_0^\infty \cos(t\sqrt\lambda) \frac {R_0^+(\lambda)-R_0^-(\lambda)} \lambda \dd\lambda = \frac{\cos(t\sqrt{-\Delta})}{-\Delta}.
$$
We explicitly represent $\frac{\cos(t\sqrt {-\Delta})} {-\Delta}$ by noting that
$$\begin{aligned}
\frac{\cos(t\sqrt {-\Delta})} {-\Delta} &= \frac 1 {-\Delta} - \int_0^t \frac{\sin(s\sqrt{-\Delta})}{\sqrt{-\Delta}} \dd s \\
&= -\frac 1 {2\pi} \log|x-y| - \frac 1 {2\pi} \int_{|x-y|}^{|x-y|\vee t} \frac {ds} {\sqrt{s^2-|x-y|^2}}.
\end{aligned}$$
Therefore
$$
\left|\frac{\cos(t\sqrt {-\Delta})} {-\Delta}\right|(x, y) \les 1 + |\log\max(|x-y|, t)|,
$$
so
$$
\sup_{s \in [0, t]} \left|\frac{\cos(s\sqrt {-\Delta})} {-\Delta}\right|(x, y) \les 1 + |\log|x-y|| + \log_+ t.
$$
However, we require a finer estimate: in fact
$$
\sup_{t \in [0, \infty)} \left|\frac{\cos(t\sqrt{-\Delta})}{-\Delta}(x, y) - \log_+ t\right| \les 1 + |\log|x-y||,
$$
where $\log_+ t$ can be replaced by a mollified version. We have isolated the worst-behaved term as a rank-one operator $(\log_+ t) 1 \otimes 1$.

Plugging this into (\ref{cosh}) and using the fact that $(I-\frac {1 \otimes |V|}{\|V\|_{L^1_x}}) 1 = 0$, we obtain that the term $\log_+ t$ only meaningfully interacts with the third term in formula (\ref{r0v'}), namely $\frac{h(\lambda)}{\frac i 4 \sgn \lambda - \frac 1 {2\pi} \log|\lambda|} \widecheck N_2(\lambda)$. A simple computation shows that the convolution product of these two expressions is uniformly bounded (it has Fourier transform $\frac c \lambda$). Therefore
$$\begin{aligned}
\left\|\int_{-\infty}^t \frac{\cos((t-s)\sqrt H)P_c} H F(s) \dd s \right\|_{(1+\log_+|x|)L^\infty_x L^\infty_t} \les \\
\les \|F\|_{((1+\log_+|x|)^{-1}L^1_x \cap \mc K_1) L^1_s}.
\end{aligned}$$
Finally, using (\ref{sincos}) we get (\ref{c8}).

The inequality (\ref{c9}) is proved in the same way, noting that
$$
\frac {\cos(t\sqrt H)P_c}{\sqrt H} \frac {\cos(s\sqrt H)P_c}{\sqrt H} = \frac 1 2 \left(\frac{\cos((t-s)\sqrt H)P_c} H + \frac{\cos((t+s)\sqrt H)P_c} H\right)
$$
and then making use of the comparison Lemma \ref{norm_equivalence} at the end to introduce the Sobolev norm.

Next, we prove the nonendpoint Strichartz estimates (\ref{c11}) and (\ref{c12}). We start from the operators $\frac{\sin(t\sqrt H)P_c}{H^{s/2}}$ and $\frac{\cos(t\sqrt H)P_c}{H^{s/2}}$, $1/4 < s < 1$. By the same $T T^*$ argument as above, the problem reduces to examining the operators
\be\lb{nonendpoint}
\frac{\cos(t\sqrt H)P_c}{H^s} = \int_{-\infty}^\infty \cos(t\lambda) (I + R_0((\lambda+i0)^2)V)^{-1} \frac{R_0((\lambda+i0)^2)}{|\lambda|^{2s-1}\sgn\lambda} \dd \lambda.
\ee
We are compelled to study the Fourier transform $M(t):=\displaystyle \left(\frac{R_0((\lambda+i0)^2)}{|\lambda|^{2s-1}\sgn\lambda}\right)^\wedge(t)$.
We cannot use contour integration since the function is not analytic. The Fourier transform is explicitly given by
$$
M(t)(x, y) = \frac i 4 \left(\int_0^\infty e^{-it\lambda} H_0^+(\lambda|x-y|) \lambda^{1-2s} \dd \lambda - \int_0^\infty e^{it\lambda} H_0^-(\lambda|x-y|) \lambda^{1-2s} \dd \lambda\right).
$$
By rescaling this becomes
$$
M(t)(x, y)=|x-y|^{2s-2} \frac i 4 \left(\int_0^\infty e^{-i\frac t {|x-y|} \lambda} H_0^+(\lambda) \lambda^{1-2s} \dd \lambda - \int_0^\infty e^{i\frac t {|x-y|}\lambda} H_0^-(\lambda) \lambda^{1-2s} \dd \lambda\right)
$$
Recall that $H_0^\pm(z) = \pm J_0(z) + iY_0(z)$, where $J_0$ and $Y_0$ are the Bessel functions of first and second kind. We have the following asymptotic expansions, see \cite{green}:
$$
H_0^\pm(z) = \pm 1 - \frac {2i}\pi \log(z/2) - \frac{2i\gamma}\pi + \tilde O(z^2\log z)
$$
for $|z|<1$ and
$$
H_0^\pm(z) = e^{\pm iz} \omega_\pm(z),\ \omega_\pm(z) = \tilde O((1+|z|)^{-1/2})
$$
for $|z|>1$. For simplicity we used the notation $f=\tilde O(g)$ if $f^{(k)} = O(g^{(k)})$ for all $k\geq 0$. Recall $h$ is a smooth cutoff function. Then by stationary phase, if we set $k= \frac t {|x-y|}$,
$$
\left|\int_0^\infty e^{ik\lambda} (1+\log\lambda) \lambda^{1-2s} h(\lambda) \dd \lambda\right| \les \min(1, \frac {1+\log k}{k^{2-2s}})
$$
with better bounds for the error term and
$$
\left|\int_0^\infty e^{i(k-1)\lambda} \lambda^{(1/2)-2s} (1-h(\lambda)) \dd \lambda\right| \les \min(|k-1|^{-N}, |k-1|^{2s-3/2}),
$$
where the last term is replaced by $1+\log_- |k-1|$ if $s=3/4$ and $1$ if $s>3/4$.

Thus we obtain that
\be\lb{alternative}
M(t)(x, y) \les \left\{\begin{array}{ll} \displaystyle\frac{\log \frac t {|x-y|}}{t^{2-2s}},& t >> |x-y|,\\
\displaystyle \frac 1 {|x-y|^{2-2s}} + |x-y|^{-1/2}(t-|x-y|)^{2s-3/2},& t \sim |x-y|,
\end{array}\right.\ee
where the last term gets replaced by $|x-y|^{-1/2} \log_- \left|\frac{t-|x-y|}{|x-y|}\right|$ when $s=3/4$ and by $0$ when $s>3/4$.

Note that this expression is not locally integrable in $t$ for $s \leq 1/4$. For $1/4<s<1/2$,  $M(t)(x, y) \in L^p_t$, $1 \leq p < \frac 2 {3-4s}$, and $M(t)(x, y) \in L^{\frac 2 {3-4s}, \infty}$. For $1/2 \leq s \leq 3/4$, $M(t)(x, y) \in L^p_t$ for $\frac 1 {2-2s} < p < \frac 2 {3-4s}$ and $M(t)(x, y) \in L^{\frac 2 {3-4s}, \infty}$ except when $s=3/4$. Finally, for $3/4<s<1$ $M(t)(x, y) \in L^p_t$ for $\frac 1 {2-2s} < p \leq \infty$. One always has $\displaystyle \|M(t)(x, y)\|_{L^p_t} = \frac C {|x-y|^{2-2s-1/p}}$ due to scaling.

Therefore the operator with convolution kernel $M(t)(x, y)$ takes $L^{q_2, \sigma}_x L^{r_2, \tilde \sigma}_t$ to $L^{q_1, \sigma}_x L^{r_1, \tilde \sigma}_t$, where by Young's inequality
$$
\frac 1 {q_1} = \frac 1 {q_2} -s-\frac 1{2p},\ \frac 1 {r_1} = \frac 1 {r_2} + \frac 1 p - 1,
$$
with the natural modifications at the endpoints. Combining the two exponent relations we obtain the scaling condition
\be\lb{scaling_cond2}
\frac 2 {q_1} + \frac 1 {r_1} + 2s + 1 = \frac 2 {q_2} + \frac 1 {r_2}.
\ee
This is accompanied by various restrictions on the range of allowable exponents (see the statement of Lemma \ref{lema}).

In the free case this analysis is sufficient, but in the perturbed case, taking into account (\ref{nonendpoint}) and (\ref{assym_id}), we also have to consider the term
$$
\int_{-\infty}^\infty \cos(t\lambda) R_0((\lambda+i0)^2) v (U+T(\lambda))^{-1} v \widecheck M(\lambda) \dd \lambda.
$$
This corresponds to a chain of operators
$$\begin{aligned}
L^{q_2, 1}_x L^{r_2, \tilde \sigma}_t \xrightarrow{M(t) \ast} L^\infty_x L^{r, \tilde \sigma}_t \xrightarrow{v} L^2_x L^{r, \tilde \sigma}_t \xrightarrow{((U+T(\lambda))^{-1})^\wedge(t) \ast} L^2_x L^{r, \tilde \sigma}_t \xrightarrow{v} L^1_x L^{r, \tilde \sigma}_t \\
\xrightarrow{\one_{t \geq 0} \frac{\sin(t\sqrt{-\Delta})}{\sqrt{-\Delta}}\ast} L^{q_1, \infty}_x L^{r_1, \tilde \sigma}_t.
\end{aligned}$$
The exponents satisfy the scaling conditions (\ref{scaling_cond}) and (\ref{scaling_cond2}), that is $\frac 2 {q_2} + \frac 1 {r_2} = 2 + \frac 1 r  = \frac 2 {q_1} + \frac 1 {r_1} + 2s + 1$. We can replace the Lorentz spaces $L^{q_1, \infty}$ and $L^{q_2, 1}$ by the Lorentz spaces $L^{q_1, \sigma}$ and $L^{q_2, \sigma}$ by interpolation, except at the endpoints. Note that the scaling condition is the same as in the free case and the admissible range of exponents is at least as wide.

Exactly the same proof applies to the case of the sine evolution. This finishes the proof of (\ref{c10}), hence of Lemma \ref{lema}.

Now, in order to prove (\ref{c11}) and (\ref{c12}), we set the exponents $q_1$ and $q_2$ and $r_1$ and $r_2$ to be dual to each other in the above and $\sigma=\tilde \sigma=2$. The scaling condition becomes
$$
\frac 2 {q_1} + \frac 1 {r_1} = 1-s
$$
and the restriction on the range of available exponents becomes $2 \leq r_1 \leq \frac 1 {3/4-s}$ for $1/4<s<1/2$, $\frac 1 {1-s} < r_1 \leq \frac 1 {3/4-s}$ for $1/2\leq s \leq 3/4$ (except no equal sign when $s=\frac 3 4$), and $\frac 1 {1-s}<r_1 \leq \infty$ for $3/4<s<1$. Several of these conditions can be represented more efficiently as $8 \leq q_1<\infty$ and $2 \leq r_1 \leq \infty$. When $r_1=\infty$ we replace $L^{r_1, 2}$ by $L^\infty$.

Note that in the case of the free Laplacian, due to the cancellation of the logarithmic term at low frequencies, one obtains for $\frac{\cos(t\sqrt{-\Delta})}{(-\Delta)^{s/2}}$ an estimate similar to (\ref{alternative}), but without the logarithmic factor. The absence of the logarithmic factor implies the validity of the endpoint estimate $M(t)(x, y) \in L^{\frac 1 {2-2s}, \infty}$ for $\frac 1 2 \leq s < 1$. Therefore one can take $q_1=\infty$ in the free case (but not if $s=\frac 1 2$ and $r_1=2$, since that would correspond to convolving with $t^{-1}$).

We then use a $TT^*$ argument as in the proof of the homogenous Strichartz estimates (\ref{c8}) and (\ref{c9}) to complete the proofs of (\ref{c11}) and (\ref{c12}).
\end{proof}

Next, we prove Proposition \ref{other_estimates}.

\begin{proof}[Proof of Proposition \ref{other_estimates}] By following the same approach as in the proof of Lemma \ref{technical_lemma}, under the stronger assumption on $V$ we obtain in addition to (\ref{rv}) that
$$
R_V((\lambda+i0)^2) = \widecheck Q_0(\lambda) + \frac{h(\lambda)}{\log|\lambda|} \widecheck Q_1(\lambda),
$$
where $\one_{[1, \infty)}(t) (1+\log_+|t|) Q_0,\ Q_1\in \U_{(1+\log_+|x|)^{-2}L^1_x, (1+\log_+|x|)^2L^\infty_x}$.
In addition, recall that the Fourier transform of $\frac{h(\lambda)}{\log|\lambda|}$ is of size $\frac 1 {t \log^2 |t|}$ for large $t$. Estimate (\ref{d1}) follows immediately.

Note that
$$
\frac{\cos(t\sqrt H)P_c}H -\int_t^T \frac{\sin(\tau\sqrt H)P_c}{\sqrt H} \dd \tau = \frac{\cos(T\sqrt H)P_c}H,
$$
which converges weakly to zero as $T \to \infty$. Therefore
$$
\frac{\cos(t\sqrt H)P_c}H = \int_t^\infty \frac{\sin(\tau\sqrt H)P_c}{\sqrt H} \dd \tau.
$$
Then (\ref{d2}) is a simple integration, followed by the use of the norm equivalence Lemma \ref{norm_equivalence}.

Next, again assuming that $V \in (1+\log_+|x|)^{-4} L^1_x$, instead of (\ref{r0v}) we more precisely get that
$$
(I+R_0((\lambda+i0)^2) V)^{-1} = \widecheck Q_2(\lambda) + \frac{h(\lambda)}{\log|\lambda|} \widecheck Q_3(\lambda),
$$
where $Q_2, Q_3 \in (1+\log_+|t|)^{-1} \U_{(1+\log_+|x|)^{-2}L^1_x, (1+\log_+|x|)^2L^\infty_x}$. Hence (\ref{d3}) follows once we note that by (\ref{est_cos})
$$
\|(1 + \log_+|t|)^k \cos(t\sqrt{-\Delta}) f\|_{(1+\log_+|x|)^k L^\infty_x L^1_t} \les \|\nabla f\|_{(1+\log_+|x|)^{-k} L^1_x}.
$$
Then (\ref{d4}) is a simple consequence of integration.
\end{proof}

\section{Proof of the decay estimate}
\begin{proof}[Proof of Theorem \ref{decay_estimate}]
We again separate the proof into three parts --- the high frequency, the medium frequency, and the low frequency part. We use two Banach spaces in the proof, namely $L^1_t$ and $t^{-1/2} L^\infty_t$. Note that
$$
\|f \ast g\|_{L^1_t} \les \|f\|_{L^1_t} \|g\|_{L^1_t},\ \|f \ast g\|_{t^{-1/2} L^\infty_t} \les \|f\|_{t^{-1/2} L^\infty_t} \|g\|_{L^1_t} + \|f\|_{L^1_t} \|g\|_{t^{-1/2} L^\infty_t},
$$
and
$$
\|\one_{t \geq 0} t^{-1/2} \ast \one_{t \geq 0} t^{-1/2}\|_{L^\infty_t} < \infty.
$$
Our proof strategy will be to show that the integral kernel we are examining belongs to both $L^1_t$ and $t^{-1/2} L^\infty_t$. Note that $\widehat h(t)$ and other Fourier transformed cutoff functions belong to both spaces, so cutting off in frequency preserves both spaces.

High energy: We represent the high frequency part of the evolution as
\be\lb{high_energy}
\frac 1 {\pi i} \int_{-\infty}^\infty e^{-it\lambda} R_0((\lambda+i0)^2) (I + V R_0((\lambda+i0)^2))^{-1} (1-h(\lambda/R)) f \dd \lambda.
\ee
What we show is that for sufficiently high $a$
$$
\left\|\int_{-\infty}^\infty e^{-it\lambda} R_0((\lambda+i0)^2) ((I + V R_0((\lambda+i0)^2))^{-1}-I) (1-h(\lambda/R)) f \dd \lambda \right\|_{L^\infty_x} \les |t|^{-1/2} \|f\|_{L^1_x}.
$$
The desired estimate follows for $H$ knowing that it is true for $-\Delta$, i.e.\ knowing that (see e.g.\ \cite{beals} for the proof)
$$
\|e^{it\sqrt{-\Delta}} \langle \Delta \rangle^{-3/4-} f\|_{L^\infty_x} \les |t|^{-1/2} \|f\|_{L^1_x},\ \|e^{it\sqrt{-\Delta}} \langle \Delta \rangle^{-3/4} f\|_{L^\infty_x} \les |t|^{-1/2} \|f\|_{\mc H^1_x}
$$
and same for $\Delta^{-1/2} \langle \Delta \rangle^{-1/4}$.

Consider the expression $\one_{t \geq r} (t^2-r^2)^{-1/2}$. Note that $\|\one_{t \geq r} (t^2-r^2)^{-1/2}\|_{L^1} = +\infty$ and that $\|\one_{t \geq r} (t^2-r^2)^{-1/2}\|_{(t-r)^{-1/2}L^\infty} \les r^{-1/2}$. However, for the high frequency part we have the improved bound
\be\lb{detailed_computation}\begin{aligned}
&\|(\delta_0(t) - R \widehat h(Rt)) \ast \one_{t \geq r} (t^2-r^2)^{-1/2}\|_{L^1_t} \les \\
&\les \int_r^{r+\rho} (t^2-r^2)^{-1/2} \dd t + R^{-1} \int_{r+\rho}^\infty \frac d {dt} (t^2-r^2)^{-1/2} \dd t \\
&\les \min(\rho^{1/2} r^{-1/2}, 1+\log_+(\rho/r)) + R^{-1} \min(r^{-1/2} \rho^{-1/2}, \rho^{-1}) \\
&\les \min(R^{-1/2} r^{-1/2}, 1+\log_-(Rr)),
\end{aligned}\ee
where we set $\rho = R^{-1}$. Also, 
$$
\|R \widehat h(Rt) \ast \one_{t \geq r} (t^-r^2)^{-1/2}\|_{(t-r)^{-1/2} L^\infty_t} \les r^{-1/2} \|R \widehat h(Rt) \ast \one_{t \geq r} (t-r)^{-1/2}\|_{(t-r)^{-1/2} L^\infty_t}
$$
and uniformly for all $R$
$$
\|R \widehat h(Rt) \ast \one_{t \geq 0} t^{-1/2}\|_{t^{-1/2} L^\infty_t} \les \|\one_{(-\infty, t/2]}(s) R \widehat h(Rs)\|_{L^1_s} + t^{1/2} \sup_{s \in [t/2, t]} R h(Rs) \les t^{-1/2}.
$$
Therefore
$$
\|(\delta_0(t) - R \widehat h(Rt)) \ast \one_{t \geq r} (t^2-r^2)^{-1/2}\|_{(t-r)^{-1/2} L^\infty_t} \les r^{-1/2}.
$$


We expand the high energy part (\ref{high_energy}) into a geometric (Born) series. Let us first examine the term (where for convenience we denote $r_1=|x-y|$, $r_2=|y-z|$)
\be\lb{rvr}\begin{aligned}
&\int_{-\infty}^\infty e^{-it\lambda} R_0((\lambda+i0)^2) V R_0((\lambda+i0)^2) (1-h(\lambda/R)) \dd \lambda = \\
&= \int (\delta_0-R \widehat h(Rt)) \ast \left((\delta_0 - (R/2) \widehat h(Rt/2)) \ast (\one_{t \geq r_1} (t^2-r_1^2)^{-1/2}) \right) V(y) \ast \\
&\ast \left((\delta_0 - (R/2) \widehat h(Rt/2)) \ast (\one_{t \geq r_2}  (t^2-r_2^2)^{-1/2}) \right) \dd y.
\end{aligned}\ee
On one hand, since $(t^2-r^2)^{-1/2} \les r^{-1/2} (t-r)^{-1/2}$, this term is bounded by
$$
\int (\one_{t \geq r_1} r_1^{-1/2} (t-r_1)^{-1/2}) |V(y)| \ast (\one_{t \geq r_2} r_2^{-1/2} (t-r_2)^{-1/2}) \dd y,
$$
which, taking into account the fact that $\one_{t \geq 0} t^{-1/2} \ast \one_{t \geq 0} t^{-1/2}$ is uniformly bounded (and same for their mollified versions), is then bounded by
$$
(\ref{rvr}) \les \int r_1^{-1/2} |V(y)| r_2^{-1/2} \dd y \les \min(r_1^{-1/2}, r_2^{-1/2}) \|V\|_{L^1 \cap \tilde {\mc K}}.
$$
On the other hand, denoting
$$\begin{aligned}
f_1 = (\delta_0 - (R/2) \widehat h(Rt/2)) \ast (\one_{t \geq r_1} (t^2-r_1^2)^{-1/2}),\\
f_2 = (\delta_0 - (R/2) \widehat h(Rt/2)) \ast (\one_{t \geq r_2}  (t^2-r_2^2)^{-1/2}),
\end{aligned}$$
we have
$$\begin{aligned}
\|f_1 \ast f_2\|_{(t-r_1-r_2)^{-1/2} L^\infty_t} &\les \|f_1\|_{(t-r_1)^{-1/2}L^\infty_t} \|f_2\|_{L^1_t} + \|f_1\|_{L^1_t} \|f_2\|_{(t-r_2)^{-1/2} L^\infty_t} \\
&\les r_1^{-1/2} (1+ \log_-(R r_2)) + (1 + \log_-(R r_1)) r_2^{-1/2}.
\end{aligned}$$
We obtain a bound of
$$\begin{aligned}
(\ref{rvr}) &\les (t-r_1-r_2)^{-1/2} \int |V(y)| (r_1^{-1/2} (1+ \log_-(R r_2)) + (1 + \log_-(R r_1)) r_2^{-1/2}) \dd y \\
&\les (t-r_1-r_2)^{-1/2} \|V\|_{L^1_x \cap \tilde {\mc K}}.
\end{aligned}$$
It is here and in the similar estimate for the general term that we fully use the condition that $V \in \tilde {\mc K}$. Combining the two bounds we obtain
$$
(\ref{rvr}) \les \min(r_1^{-1/2}, r_2^{-1/2}, (t-r_1-r_2)^{-1/2}) \|V\|_{L^1_x \cap \tilde {\mc K}} \les t^{-1/2} \|V\|_{L^1_x \cap \tilde {\mc K}}.
$$

We now consider the general term, for $n \geq 2$,
\be\lb{rvrvr}\begin{aligned}
&\int_{-\infty}^\infty e^{-it\lambda} R_0((\lambda+i0)^2) (V R_0((\lambda+i0)^2))^{n-1} (1-h(\lambda/R)) \dd \lambda = \\
&= \int (\delta_0-R \widehat h(Rt)) \ast \left((\delta_0 - (R/2) \widehat h(Rt/2)) \ast (\one_{t \geq r_1} (t^2-r_1^2)^{-1/2}) \right) V(y_1) \ast \ldots \\
&\ast V(y_{n-1}) \left((\delta_0 - (R/2) \widehat h(Rt/2)) \ast (\one_{t \geq r_n}  (t^2-r_n^2)^{-1/2}) \right) \dd y_1 \ldots \dd y_{n-1},
\end{aligned}\ee
where $r_1=|x-y_1|$, $r_2=|y_1-y_2|, \ldots, r_n=|y_{n-1}-z|$. Suppose we denote
$$
f_k = (\delta_0 - (R/2) \widehat h(Rt/2)) \ast (\one_{t \geq r_k}  (t^2-r_k^2)^{-1/2}).
$$
We obtain that
$$
\|f_1 \ast \ldots \ast f_n\|_{L^1_t} \les \prod_{k=1}^n \min(R^{-1/2} r_k^{-1/2}, 1 + \log_- (Rr_k)),
$$
$$\begin{aligned}
\|f_1 \ast \ldots \ast f_n\|_{(t-r_1-\ldots-r_n)^{-1/2} L^\infty_t} &\les \prod_{k=1}^n \min(R^{-1/2} r_k^{-1/2}, 1 + \log_- (Rr_k)) \cdot \\
&\cdot \sum_{k=1}^n \frac 1 {r_k^{1/2} \min(R^{-1/2} r_k^{-1/2}, 1 + \log_- (Rr_k))},
\end{aligned}$$
and
$$\begin{aligned}
\|f_1 \ast \ldots \ast f_n\|_{L^\infty_t} &\les \prod_{k=1}^{n-1} \min(R^{-1/2} r_k^{-1/2}, 1 + \log_- (Rr_k)) \cdot \\
&\cdot \sum_{k=1}^{n-1} \frac 1 {r_k^{1/2} \min(R^{-1/2} r_k^{-1/2}, 1 + \log_- (Rr_k))} \cdot r_n^{-1/2}.
\end{aligned}$$
Therefore
$$\begin{aligned}
(\ref{rvrvr}) &\les \min((t-r_1-\ldots-r_n)^{-1/2}, r_1^{-1/2}, \ldots, r_n^{-1/2}) R^{2-n} \|V\|^{n-1}_{L^1_x \cap \tilde{\mc K}} \\
&\les n^{1/2} t^{-1/2} R^{2-n} \|V\|^{n-1}_{L^1_x \cap \tilde{\mc K}}.
\end{aligned}$$
For sufficiently large $R$ or small $V$ the series is then summable.

Medium frequencies: We want to show that for every $\lambda_0 \ne 0$ there exists $\epsilon>0$ such that
\be\lb{medium_freq}
\bigg\|\frac 1 {\pi i} \int_{-\infty}^\infty e^{it\lambda} R_0((\lambda+i0)^2) ((I + V R_0((\lambda+i0)^2))^{-1}-I) h(\frac {\lambda-\lambda_0} \epsilon) f \dd \lambda \bigg\|_{L^\infty_x} \les t^{-1/2} \|f\|_{L^1_x}.
\ee

Without loss of generality, assume $\lambda_0>2$. Then we can write $h(\frac {\lambda-\lambda_0} \epsilon) = (1-h(\lambda)) h(\frac {\lambda-\lambda_0}\epsilon)$. Then note that by (\ref{detailed_computation})
\be\lb{L^1_bound}\begin{aligned}
&\|h(\frac{\lambda-\lambda_0}\epsilon)^\wedge(t) \ast (1_{t \geq r} (t^2-r^2)^{-1/2})\|_{L^1_t} \les \\
&\les \|(\delta_0-\widehat h(t)) \ast (1_{t \geq r} (t^2-r^2)^{-1/2})\|_{L^1_t} \les \min (r^{-1/2}, 1+ \log_-r).
\end{aligned}\ee
Furthermore, by a computation similar to (\ref{detailed_computation}) we obtain that
\be\lb{tail}
\int_R^\infty |(\delta_0-\widehat h(t)) \ast (1_{t \geq r} (t^2-r^2)^{-1/2})| \les \min (r^{-1/2}, 1+\log_-r, (R^2-r^2)^{-1/2}).
\ee
At the same time, note that $h(\frac{\lambda-\lambda_0}\epsilon)^\wedge = \epsilon e^{i\lambda_0\epsilon t} \widehat h(\epsilon t)$ and
\be\lb{unu'}
\|(\epsilon e^{i\lambda_0\epsilon t} \widehat h(\epsilon t)) \ast (\one_{t \geq r} (t^2-r^2)^{-1/2})\|_{L^\infty_t} \les \|\epsilon \widehat h(\epsilon t)\|_{L^{2, 1}_t} \|\one_{t \geq r} (t^2-r^2)\|_{L^{2, \infty}_t} \les r^{-1/2} \epsilon^{1/2}.
\ee
On the other hand, $\one_{t \geq r} (t^2-r^2)^{-1/2} \les r^{-1/2} \one_{t \geq r} (t-r)^{-1/2}$ and uniformly in $\epsilon$ $|(\epsilon e^{i\lambda_0\epsilon t} \widehat h(\epsilon t)) \ast t^{-1/2}| \les t^{-1/2}$, so
\be\lb{doi'}
\|(\epsilon e^{i\lambda_0\epsilon t} \widehat h(\epsilon t)) \ast \one_{t \geq r} (t^2-r^2)^{-1/2}\|_{(t-r)^{-1/2} L^\infty_t} \les r^{-1/2}.
\ee
Combining (\ref{unu'}) and (\ref{doi'}) we obtain that
\be\lb{trei}
\|(\epsilon e^{i\lambda_0\epsilon t} \widehat h(\epsilon t)) \ast \one_{t \geq r} (t^2-r^2)^{-1/2}\|_{t^{-1/2} L^\infty_t} \les \epsilon^{1/2}+r^{-1/2}.
\ee

Define the spaces of kernels $\U_1 := \U_{L^1_x, L^1_x \cap \tilde {\mc K}_{1/2}}$ and
$$
\mc V_1 := \{T(t, y, x) \mid \|T(t, y, x)\|_{t^{-1/2}L^\infty_t} \in \B(L^1_x, L^1_x \cap \mc K)\}.
$$
Note that $T \in \mc V_1$ implies that $\|T(t)\|_{\B(L^1_x)} \les t^{-1/2}$. Since
$$
\|f_1 \ast f_2\|_{t^{-1/2} L^\infty_t} \les \|f_1\|_{t^{-1/2}L^\infty_t} \|f_2\|_{L^1_t} + \|f_1\|_{L^1_t} \|f_2\|_{t^{-1/2} L^\infty_t},
$$
we obtain that
$$
\|T_1 \ast T_2\|_{\mc V_1} \les \|T_1\|_{\U_1} \|T_2\|_{\mc V_1} + \|T_1\|_{\mc V_1} \|T_2\|_{\U_1}
$$
and more generally that
\be\lb{subalgebra'}
\bigg\|\prod_{k=1}^n T_k\bigg\|_{\mc V_1} \les C^n \prod_{k=1}^n \|T_k\|_{\U_1} \cdot \sum_{k=1}^n \frac {\|T_k\|_{\mc V_1}}{\|T_k\|_{\U_1}}.
\ee

Let $S_\epsilon(\lambda) = h(\frac {\lambda-\lambda_0} \epsilon) V (R_0((\lambda+i0)^2) - R_0((\lambda_0+i0)^2)$. A simple computation shows that if $V \in L^1_x \cap \tilde {\mc K}$, (\ref{tail}) implies $\lim_{\epsilon \to 0} \|\widehat S_\epsilon\|_{\widehat{\U_1}} = 0$. Furthermore, by (\ref{trei}) $S_{\epsilon} \in \widehat{\mc V_1}$ uniformly for small $\epsilon$ (this also uses $V \in L^1_x \cap \tilde {\mc K}$).

We also observe that by (\ref{assym_id}) and Lemma \ref{invertibility}, if $V \in L^1$ and $V \in L^q_{loc}$, \\
$\lim_{R \to \infty} R^{1-1/q} \|V\|_{L^q(|x| \in [R, 2R])} = 0$, then $(I+VR_0((\lambda_0+i0)^2))^{-1} \in \B(L^1)$. Writing
$$
(I+VR_0((\lambda+i0)^2))^{-1} = I - V R_0((\lambda+i0)^2) (I+VR_0((\lambda+i0)^2))^{-1}
$$
we obtain that, if in addition $V \in \tilde {\mc K}$, then $(I+VR_0((\lambda_0+i0)^2))^{-1} - I \in \B(L^1_x, L^1_x \cap \tilde {\mc K}_{1/2})$ and $(I+VR_0((\lambda_0+i0)^2))^{-1} \in \B(L^1_x \cap \tilde {\mc K}_{1/2})$. We then express
$$\begin{aligned}
&h(\frac {\lambda-\lambda_0} \epsilon) ((I + V R_0((\lambda+i0)^2))^{-1} - I) =\\
&= h(\frac {\lambda-\lambda_0} \epsilon) ((I + V R_0((\lambda_0+i0)^2) + h(\frac {\lambda-\lambda_0} {2\epsilon}) V (R_0((\lambda+i0)^2) - R_0((\lambda_0+i0)^2)))^{-1}-I) \\
&= h(\frac {\lambda-\lambda_0} \epsilon) (\sum_{n=0}^\infty (I + V R_0((\lambda_0+i0)^2))^{-1} (S_{2\epsilon}(\lambda) (I + V R_0((\lambda_0+i0)^2))^{-1})^n - I).
\end{aligned}$$
This expansion converges in $\widehat {\U_1}$ for sufficiently small $\epsilon$. By (\ref{subalgebra'}) we obtain that $h(\frac {\lambda-\lambda_0} \epsilon) ((I + V R_0((\lambda+i0)^2))^{-1} - I) \in \widehat{\mc V_1}$ as well.

In addition, by (\ref{L^1_bound}) and (\ref{trei}),
$$\begin{aligned}
&\|(h(\frac {\lambda-\lambda_0}{2\epsilon}) R_0((\lambda+i0)^2))^\wedge(t)(x, y)\|_{L^1_t} \les 1+\log_-|x-y|, \\
&(h(\frac {\lambda-\lambda_0}{2\epsilon}) R_0((\lambda+i0)^2))^\wedge(t)(x, y) \les t^{-1/2}(1+|x-y|^{-1/2})
\end{aligned}$$
and the conclusion (\ref{medium_freq}) follows.

Low frequencies: Here our goal is to show that for sufficiently small $\epsilon>0$
\be\lb{low_freq}
\left\| \frac 1 {\pi i} \int_{-\infty}^\infty \sin(t\lambda) R_0((\lambda+i0)^2)((I+V R_0((\lambda+i0)^2))^{-1}-I) h(\frac \lambda \epsilon) f \dd \lambda \right\|_{L^\infty_x} \les t^{-1/2} \|f\|_{L^1_x}
\ee
and likewise for the cosine propagator.

Following Lemma \ref{free_resolvent}, we decompose $R_0((\lambda+i0)^2)$ into a component $\widecheck L$ with integrable Fourier transform and a rank-one part:
$$
R_0((\lambda+i0)^2) = \widecheck L(\lambda) + h(\lambda)(\frac i 4 \sgn \lambda - \frac 1 {2\pi} \log|\lambda|) 1 \otimes 1,
$$
where $L(t) = \one_{t \geq r} (t^2-r^2)^{-1/2} - \eta(t) t^{-1} + \widehat{\tilde g}(t)$. 
We know by Lemma \ref{free_resolvent} that
$$
\|\epsilon \widehat h(\epsilon t) \ast L(t)\|_{L^1_t} \les \|L(t)\|_{L^1_t} \les 1 + |\log r|
$$
and that as $R \to \infty$
\be\lb{cond}
\int_R^\infty |L(t)| \dd t \les \chi_{r \geq R} (\log r - \log R) + \max(1, \frac {r^2}{R^2}) + o(1).
\ee
Note that for $1 \leq p \leq 2$ (using Lorentz spaces for $p=2$)
\be\lb{unu}\begin{aligned}
\|\epsilon \widehat h(\epsilon t) \ast \one_{t \geq r} ((t^2-r^2)^{-1/2} - t^{-1})\|_{L^\infty_t} &\les \|\epsilon \widehat h(\epsilon t)\|_{L^{p'}_t} \|\one_{t \geq r} ((t^2-r^2)^{-1/2}) - t^{-1})\|_{L^p_t} \\
&\les r^{-1+1/p} \epsilon^{1/p}.
\end{aligned}\ee

On the other hand, for $0 \leq \alpha \leq \frac 1 2$, $\one_{t \geq r} (t^2-r^2)^{-1/2} \les r^{-\alpha} \one_{t \geq r} (t-r)^{\alpha-1}$ and for $0<\alpha \leq 1$, uniformly in $\epsilon$,
$$
\epsilon \widehat h(\epsilon t) \ast (\one_{t \geq 0} t^{\alpha-1}) \les t^{\alpha-1},
$$
so
\be\lb{doi}
\|\epsilon \widehat h(\epsilon t) \ast (\one_{t \geq r} (t^2-r^2)^{-1/2})\|_{(t-r)^{\alpha-1}L^\infty_t} \les r^{-\alpha}.
\ee
The same remains true after adding the term $\one_{t \geq r} t^{-1}$, which is smaller. Combining (\ref{unu}) and (\ref{doi}) we get that
$$
\|\epsilon \widehat h(\epsilon t) \ast \one_{t \geq r} ((t^2-r^2)^{-1/2} - t^{-1})\|_{t^{-1/2} L^\infty_t} \les \epsilon^{1/2} + r^{-1/2}.
$$
We consider separately the term $\one_{t \geq r} t^{-1} - \eta(t) t^{-1}$, for which we see that
$$
\|\epsilon \widehat h(\epsilon t) \ast \one_{t \geq r} t^{-1} - \eta(t) t^{-1}\|_{t^{-1/2} L^\infty_t} \les \|\one_{t \geq r} t^{-1} - \eta(t) t^{-1}\|_{t^{-1/2} L^\infty_t} \les 1+r^{-1/2}.
$$
Therefore
$$
\|\epsilon \widehat h(\epsilon t) \ast L(t)\|_{t^{-1/2} L^\infty_t} = \|\epsilon \widehat h(\epsilon t) \ast (\one_{t \geq r} (t^2-r^2)^{-1/2} - \eta(t) t^{-1})\|_{t^{-1/2} L^\infty_t} \les 1+\epsilon^{1/2}+r^{-1/2}.
$$
In a similar manner we obtain that
\be\lb{patru}
\|\epsilon \widehat h(\epsilon t) \ast L(t)\|_{t^{-1/2} (1+\log_+|t|)^{-k} L^\infty_t} \les 1 + \epsilon^{1/2} (1+\log_+^k r) + r^{-1/2} + r^{-1/2} \log_+^k r.
\ee

Although it is also possible to carry out the low energy analysis in an $L^2$ setting, as in the proof of Lemma \ref{technical_lemma}, we first do the proof in a weighted $L^1$ setting for a sharper result. Recall the notations $G_0=R_0(0)$, $v=|V|^{1/2}$, $U=\sgn V$, $P=\frac{v \otimes v}{\|V\|_{L^1_x}}$, $Q=I-P$. We write $I+VR_0((\lambda+i0)^2) = U(U+|V|R_0((\lambda+i0)^2))$ and
and
$$\begin{aligned}
U+|V|R_0((\lambda+i0)^2) &= \begin{pmatrix}
Q_1(U+|V|R_0((\lambda+i0)^2)) Q_1 & Q_1(U+|V|R_0((\lambda+i0)^2)) P_1 \\ P_1(U+|V|R_0((\lambda+i0)^2)) Q_1 & P_1(U+|V|R_0((\lambda+i0)^2)) P_1
\end{pmatrix} \\
&=: \begin{pmatrix} \tilde L_{00}(\lambda) & \tilde L_{01}(\lambda) \\ \tilde L_{10}(\lambda) & \tilde L_{11}(\lambda) \end{pmatrix}.
\end{aligned}$$
Here we define the projections $P_1$ and $Q_1$ by $P_1:=\frac{|V| \otimes 1}{\|V\|_{L^1_x}}$ and $Q_1=I-P_1$. They are bounded on $L^1$ and on weighted $L^1$.

The inverse is given for small $\lambda$ by the Feshbach formula (Lemma \ref{fehsbach}):
$$
(U+|V|R_0((\lambda+i0)^2))^{-1} = \begin{pmatrix} \tilde L_{00}^{-1} + \tilde L_{00}^{-1} \tilde L_{01} \tilde C^{-1} \tilde L_{10} \tilde L_{00}^{-1} & -\tilde L_{00}^{-1} \tilde L_{01} \tilde C^{-1} \\ -\tilde C^{-1} \tilde L_{10} \tilde L_{00}^{-1} & \tilde C^{-1} \end{pmatrix},
$$
where $\tilde C=\tilde L_{11}-\tilde L_{10}\tilde L_{00}^{-1}\tilde L_{01}$.

Define the spaces of kernels $\tilde \U_1 := \U_{(1+\log_+|x|)^{-1}L^1_x, (1+\log_+|x|)^{-2} L^1_x \cap \tilde {\mc K}_{1/2}}$ and
\be\begin{aligned}
\tilde {\mc V}_1 := \{T(t, y, x) \mid \|T(t, y, x)\|_{t^{-1/2}(1+ \log_+|t|)^{-2}L^\infty_t} \in \\ \B((1+\log_+|x|)^{-2}L^1_x, (1+\log_+|x|)^{-1}L^1_x \cap \mc K_1)\}.
\end{aligned}\ee
Note that $T \in \tilde {\mc V}_1$ implies that $\|T(t)\|_{\B((1+\log_+|x|)^{-1}L^1_x)} \les t^{-1/2} (1+\log_+|t|)^{-2}$. In addition, $\|T_1 \ast T_2\|_{\tilde \U_1} \les \|T_1\|_{\tilde \U_1} \|T_2\|_{\tilde \U_1}$. From
$$
\|f_1 \ast f_2\|_{t^{-1/2}(1+\log_+|t|)^{-2} L^\infty_t} \les \|f_1\|_{t^{-1/2}(1+\log_+|t|)^{-2}L^\infty_t} \|f_2\|_{L^1_t} + \|f_1\|_{L^1_t} \|f_2\|_{t^{-1/2}(1+\log_+|t|)^{-2}L^\infty_t}
$$
it follows that
$$
\|T_1 \ast T_2\|_{\tilde {\mc V}_1} \les \|T_1\|_{\tilde \U_1} \|T_2\|_{\tilde {\mc V}_1} + \|T_1\|_{\tilde {\mc V}_1} \|T_2\|_{\tilde \U_1}.
$$
More generally
\be\lb{subalgebra}
\bigg\|\prod_{k=1}^n T_k\bigg\|_{\tilde {\mc V}_1} \les C^n \prod_{k=1}^n \|T_k\|_{\tilde \U_1} \cdot \sum_{k=1}^n \frac {\|T_k\|_{\tilde {\mc V}_1}}{\|T_k\|_{\tilde \U_1}}.
\ee

In the Fehsbach formula we first examine $\tilde L_{00}^{-1}$. As in the medium frequency case, let $S_\epsilon(\lambda) = h(\frac \lambda \epsilon)(\tilde L_{00}(\lambda)-\tilde L_{00}(0)$. Due to (\ref{cond}), $\lim_{\epsilon \to 0} \|S_\epsilon\|_{\widehat{\tilde \U_1}} = 0$; this requires that $V(x) \in (1+\log_+|x|)^{-3} L^1_x \cap (1+\log_+|x|)^{-1} \tilde {\mc K}_{1/2} \cap \tilde {\mc K}$.

Zero being a regular point of the spectrum implies that $\tilde L_{00}(0) = Q_1(U+|V| G_0) Q_1$ is invertible on $Q_1 (1+\log_+|x|)^{-1} L^1_x$. Indeed, we consider two cases, according to whether $\int V(x) \dd x = 0$.

If $\int V(x) \dd x \ne 0$ consider the operator $Q_1 U$ on $Q_1 L^1_x = \{f \in L^1_x \mid \int f(x) \dd x = 0\}$. $Q_1 U$ is a compact perturbation of $U$. Therefore, if $Q_1 U$ is not invertible on $Q_1 L^1_x$ there must exist $f \in Q_1 L^1_x$ such that $Q_1 U f = 0$, so $Uf=\alpha |V|$ or in other words $f = \alpha V$. Thus $Q_1 U Q_1$ is invertible on $Q_1 L^1_x$ if and only if $\int V(x) \dd x \ne 0$.

If $\int V(x) \dd x = 0$, choose a compact integral kernel $K(x, y)\in \B(L^1_y, L^\infty_x)$ such that $\int V(x) K(x, y) V(y) \dd x \dd y \ne 0$ and consider the operator $Q_1(U-\delta |V| K)$ on $Q_1 L^1_x$. This is also a compact perturbation of $U$. Therefore, if it is not invertible on $Q_1 L^1_x$ there must exist $f \in Q_1 L^1_x$ such that $Q_1 (U-\delta|V|K) f = 0$, so $(U-\delta|V|K) f = \alpha |V|$, so
$$
f=(U-\delta|V|K)^{-1}|V|=\sum_{n=0}^\infty \delta^n V (K V)^n
$$
for sufficiently small $\delta$. If $\int f(x) \dd x = 0$ for all $\delta$ in a neighborhood of zero, then $\int V (KV)^n \dd x =0$ for all $n$, which is a contradiction when $n=1$.

Therefore there exists some value of $\delta$ (possibly zero) such that $Q_1(U-\delta|V|K)Q_1$ is invertible on $Q_1 L^1_x$. The same goes for weighted $L^1$ and Kato spaces. We then write
$$\begin{aligned}
\tilde L_{00}^{-1}(0) = (Q_1(U+|V|G_0) Q_1)^{-1} = (Q_1 (U-\delta|V|K) Q_1)^{-1} - \\
- v (Q (U + v G_0 v) Q)^{-1} (v G_0 +\delta v K) (Q_1 (U-\delta|V|K) Q_1)^{-1}.
\end{aligned}$$
This shows that $\tilde L_{00}^{-1}(0)$ is bounded on $Q L^1_x$ and on weighted $L^1$, assuming that zero is a regular point of the spectrum. Reiterating
$$
\tilde L_{00}^{-1}(0) = (Q_1 (U-\delta|V|K) Q_1)^{-1} - (Q_1 (U-\delta|V|K) Q_1)^{-1} Q_1 (|V| G_0 + \delta |V| K) Q_1 \tilde L_{00}^{-1}(0),
$$
we also obtain that $\tilde L_{00}^{-1}(0)$ is bounded on the Kato spaces.

We then use the expansion
$$
h(\frac \lambda \epsilon) \tilde L_{00}^{-1}(\lambda) = \sum_{n=0}^\infty (-1)^n \tilde L_{00}^{-1}(0) (S_{2\epsilon}(\lambda) \tilde L_{00}^{-1}(0))^n.
$$

Therefore, for sufficiently small $\epsilon$, $h(\frac \lambda \epsilon) (\tilde L_{00}^{-1}(\lambda) - (Q_1(U+|V|G_0)Q_1)^{-1}) \in \widehat {\tilde \U_1}$ (or equivalently $h(\frac \lambda \epsilon) (\tilde L_{00}^{-1}(\lambda) - U) \in \widehat {\tilde \U_1}$).

In addition, using (\ref{subalgebra}), since by (\ref{patru}) $\|S_\epsilon\|_{\widehat{\tilde {\mc V}_1}} \les 1$ uniformly for small $\epsilon$, we now obtain that $h(\frac \lambda \epsilon) (\tilde L_{00}^{-1}(\lambda)-U) \in \widehat {\tilde {\mc V}_1}$. This requires that $V \in (1+\log_+|x|)^{-3} L^1_x \cap \tilde {\mc K}$.

Next, observe that
$$
\tilde C(\lambda) = (f(\lambda)+h(\lambda)(\frac i 4 \sgn \lambda - \frac 1 {2\pi} \log|\lambda|)) V \otimes 1,
$$
where we knew that $f \in \widehat {L^1}_\lambda$ and we have now obtained that $|\widehat f(t)| \les t^{-1/2}(1+\log_+|t|)^{-2}$. For small $\epsilon$
$$
h(\frac \lambda \epsilon) \tilde C(\lambda)^{-1} = \sum_{n=0}^\infty \frac {(-1)^n h(\frac \lambda \epsilon) f^n(\lambda)}{(\frac i 4 \sgn \lambda - \frac 1 {2\pi} \log |\lambda|)^{n+1}} \frac 1 {\|V\|_{L^1_x}^2} V \otimes 1.
$$
Along the same lines as in the proof of Lemma \ref{technical_lemma} one can show that for $n \geq 1$
$$
\Bigg\|\bigg(\frac {h(\frac \lambda \epsilon)}{(\frac i 4 \sgn \lambda - \frac 1 {2\pi} \log |\lambda|)^n}\bigg)^\wedge\Bigg\|_{t^{-1/2} (1+\log_+|t|)^{-2} L^\infty_t} \les \epsilon^{1/2}\left(\frac 1 {|\log \epsilon|^{n-2}} + \frac {n^2} {|\log\epsilon|^n}\right).
$$
Therefore for small $\epsilon$ $h(\frac \lambda \epsilon) \tilde C(\lambda)^{-1} \in \widehat {\tilde \U_1} \cap \widehat{\tilde {\mc V}_1}$. Examining the other components of $(U+|V| R_0((\lambda+i0)^2))^{-1}$, we see that we have proved that
$$
h(\frac \lambda \epsilon) ((U+|V| R_0((\lambda+i0)^2))^{-1}-U) \in \widehat{\tilde \U_1} \cap \widehat{\tilde {\mc V}_1}
$$
or equivalently
\be\lb{hle}
h(\frac\lambda \epsilon) ((I+VR_0((\lambda+i0)^2))^{-1}-I) \in \widehat{\tilde \U_1} \cap \widehat{\tilde {\mc V}_1}.
\ee

We now consider the Fourier transform of the expression
$$
h(\frac \lambda \epsilon) R_0((\lambda+i0)^2) ((I+VR_0((\lambda+i0)^2))^{-1}-I) V R_0((\lambda+i0)^2).
$$
In this expression, we treat the first and the last factors separately, writing their Fourier transform as
$$
\epsilon \widehat h(\epsilon t) \ast (\one_{t \geq r} ((t^2-r^2)^{-1/2} - t^{-1}) + \one_{t \geq r} t^{-1}),
$$
where the first term is better behaved than the second. Since
$$
\int_{r_1}^{t-r_2} \frac {ds} {s(t-s)} = \frac{\log(t-r_1)+\log(t-r_2)-\log r_1 - \log r_2} t,
$$
we obtain that
\be\lb{dublu}
\|(\one_{t \geq r_1} t^{-1}) \ast (\one_{t \geq r_2}) t^{-1}\|_{t^{-1/2} L^\infty_t} \les 1+ \frac {\log_-\min(r_1, r_2)}{\max(r_1, r_2)^{1/2}} \les 1+ \frac{\log_-r_1}{r_2^{1/2}} + \frac {\log_-r_2}{r_1^{1/2}}.
\ee
Also note that
$$
\int_{r_1+r_2}^t (\one_{t \geq r_1} t^{-1}) \ast (\one_{t \geq r_2} t^{-1}) \dd t \les (\log_+t + \log_-r_1)(\log_+t+\log_-r_2).
$$
Then
$$\begin{aligned}
&\|(\one_{t \geq r_1} t^{-1}) \ast f(t) \ast (\one_{t \geq r_2} t^{-1})\|_{t^{-1/2}L^\infty_t} \les \\
&\les \left(1+ \frac{\log_-r_1}{r_2^{1/2}} + \frac {\log_-r_2}{r_1^{1/2}}\right) \|f\|_{L^1_t} + \\
&+ \|\sup_{s \in [r_1+r_2, t/2]} (\log_+t+\log_-r_1)(\log_+t+\log_-r_2) f(t-s) \dd s\|_{t^{-1/2}L^\infty_t} \\
&\les \left(1+ \frac{\log_-r_1}{r_2^{1/2}} + \frac {\log_-r_2}{r_1^{1/2}}\right) \|f\|_{L^1_t} + \|f\|_{t^{-1/2} (1+\log_+|t|)^{-2}L^\infty_t} + \\
&+ (\log_-r_1 + \log_-r_2) \|f\|_{t^{-1/2} (1+\log_+|t|)^{-1}L^\infty_t} + \log_- r_1 \log_-r_2 \|f\|_{t^{-1/2} L^\infty_t}.
\end{aligned}$$
Taking into account (\ref{hle}) and the definitions of $\tilde \U_1$ and $\tilde {\mc V}_1$, it follows that
\be\lb{restul_rezultat}\begin{aligned}
\left\|\frac 1 {\pi i} \int_{-\infty}^\infty e^{it\lambda} R_0((\lambda+i0)^2) ((I+VR_0((\lambda+i0)^2))^{-1}-I) \right.\\
V R_0((\lambda+i0)^2) h(\frac \lambda \epsilon) f \dd \lambda\|_{L^\infty_x} \les t^{-1/2} \|f\|_{L^1_x}.
\end{aligned}\ee

We next treat separately the term
\be\lb{rvr_low}
\frac 1 {\pi i} \int_{-\infty}^\infty \sin(t\lambda) R_0((\lambda+i0)^2) V R_0((\lambda+i0)^2) h(\frac \lambda \epsilon) \dd \lambda.
\ee
Taking the Fourier transform and denoting $|x-y|=r_1$, $|y-z|=r_2$, we obtain an expression of the form
$$
\int (\epsilon \widehat h(\epsilon t) \ast \one_{t \geq r_1} (t^2-r_1^2)^{-1/2})) V(y) \ast (\epsilon \widehat h(\epsilon t) \ast \one_{t \geq r_2} (t^2-r_2^2)^{-1/2}) \dd y.
$$
The worst-behaved term in this expression is $(\one_{t \geq r_1} t^{-1}) \ast (\one_{t \geq r_2} t^{-1})$, for which we have the bound (\ref{dublu}). Therefore, for $V \in \tilde {\mc K}$,
\be\lb{rvr_rezultat}
(\ref{rvr_low}) \les t^{-1/2} \int |V(y)| (1+\log_-r_1 r_2^{-1/2}+ r_1^{-1/2} \log_-r_2) \les t^{-1/2}.
\ee
From (\ref{restul_rezultat}) and (\ref{rvr_rezultat}) we obtain (\ref{low_freq}).

Combining the results for high, medium, and low frequencies, by means of a partition of unity, we see that we have proved that
$$
\bigg\|\bigg(\frac{\sin(t\sqrt H)P_c}{\sqrt H} - \frac{\sin(t\sqrt{-\Delta})}{\sqrt{-\Delta}}\bigg) f\bigg\|_{L^\infty_x} \les t^{-1/2} \|f\|_{L^1_x}
$$
and likewise for the cosine. Taking into account the well-known results in the free case, see \cite{beals}, the conclusion (\ref{H1}) follows. For (\ref{L^1}), we need to additionally prove that $\langle H \rangle^\alpha \langle \Delta \rangle^{-\beta}$ is $L^1$-bounded for $\alpha<\beta$. This is shown in Appendix C.
\end{proof}

\appendix
\section{The Fourier transforms of Hankel functions}

According to formula (4.7.19) in \cite{andrews}, for $\re \rho>0$
$$
H_0^+(\rho) = \frac 1 {\pi i} \int_{1+i\infty}^{(1+)} e^{i\rho t} (t^2-1)^{-1/2} dt,
$$
where the integral is taken along a contour that starts at $1+i\infty$ with $\arg(t^2-1)=-\pi$, surrounds the point $1$ in counterclockwise fashion, and goes back to $1+i\infty$.

Following Cauchy's theorem, for $\im \rho > 0$ one can change the contour to one that follows the real axis, starting at $+\infty$ with $\arg(t^2-1)=-2\pi$, surrounds the point $1$ in counterclockwise fashion, and goes back to $+\infty$. Letting the contour approach the real axis and $\im \rho \to 0$, we obtain (\ref{hankel1}).

One can make a similar derivation for (\ref{hankel2}), starting from formula (4.7.20) in \cite{andrews}: for $\re\rho>0$
$$
H_0^-(\rho) = \frac 1 {\pi i} \int_{-1+i\infty}^{(-1-)} e^{i\rho t} (t^2-1)^{-1/2} dt,
$$
where the integral is taken along a contour that starts at $-1+i\infty$ with $\arg(t^2-1)=\pi$, surrounds the point $-1$ in the clockwise direction, and goes back to $-1+i\infty$.

\section{The cosine evolution in two dimensions}
We start our derivation from the usual formula for the sine evolution: for $t>0$
$$
(\frac{\sin(t\sqrt{-\Delta})}{\sqrt{-\Delta}} f)(x) = \frac 1 {2\pi} \int_{|y| < t} \frac {f(x-y)}{\sqrt {t^2-|y|^2}} \dd y = \frac 1 {2\pi} \int_0^t \int_{S^1} \frac {f(x+r\omega) r}{\sqrt{t^2-r^2}} \dd \omega \dd r.
$$
Integrating by parts we obtain
$$
(\frac{\sin(t\sqrt{-\Delta})}{\sqrt{-\Delta}} f)(x) = tf(x)+\frac 1 {2\pi}\int_0^t\int_{S^1} \omega\nabla f(x+r\omega) \sqrt{t^2-r^2} \dd \omega \dd r.
$$
Taking a derivative in $t$ we get
$$\begin{aligned}
(\cos(t\sqrt{-\Delta})f)(x) &= f(x)+\frac 1 {2\pi} \int_0^t\int_{S^1} \frac{\omega\nabla f(x+r\omega) t}{\sqrt{t^2-r^2}} \dd \omega \dd r \\
&= f(x)+\frac 1 {2\pi} \int_{|y|<t} \frac {\partial_r f(x+y) t}{\sqrt{t^2-|y|^2}} \dd y,
\end{aligned}$$
where $\partial_r$ is the derivative in the direction of $y$, $\frac {y}{|y|} \nabla$. Taking into account the fact that $f(x) + \int_0^t \omega f(x+r\omega) \dd r = f(x+t\omega)$, we can represent the cosine evolution as
$$\begin{aligned}
(\cos(t\sqrt{-\Delta})f)(x) &= \frac 1 {2\pi} \int_{S^1} f(x+t\omega) \dd \omega + \int_0^t\int_{S^1} \frac{\omega \nabla f(x+r\omega)r^2 \dd\omega\dd r}{(t+\sqrt{t^2-r^2})\sqrt{t^2-r^2}} \\
&= \frac 1 {2\pi t} \int_{|y|=t} f(x+y) \dd y + \frac 1 {2\pi} \int_{|y|<t} \frac {\partial_r f(x+y) |y| \dd y}{(t+\sqrt{t^2-|y|^2})\sqrt{t^2-|y|^2}}.
\end{aligned}$$
Assuming that $f$ goes to zero at infinity, then $f(x+t\omega) = -\int_t^\infty \omega \nabla f(x+r\omega) \dd r$. Therefore
\be\lb{est_cos}\begin{aligned}
(\cos(t\sqrt{-\Delta})f)(x) &= -\frac 1 {2\pi} \int_t^\infty \int_{S^1} \omega \nabla f(x+r\omega) \dd \omega + \int_0^t\int_{S^1} \frac{f(x+r\omega)r^2 \dd \omega \dd r}{(t+\sqrt{t^2-r^2})\sqrt{t^2-r^2}} \\
&= -\frac 1 {2\pi} \int_{|y|\geq t} \frac {\partial_r f(x+y) \dd y}{|y|} + \frac 1 {2\pi} \int_{|y|<t} \frac {\partial_r f(x+y) |y| \dd y}{(t+\sqrt{t^2-|y|^2})\sqrt{t^2-|y|^2}}.
\end{aligned}\ee

\section{Boundedness of some Fourier multipliers}
\begin{lemma} Assume that $V \in L^q$ for some $q>1$. For $0<\alpha<\beta<1$
$$
\|\langle H \rangle^\alpha (-\Delta+1)^{-\beta} f\|_{L^1_x} \les \|f\|_{L^1_x}.
$$
\end{lemma}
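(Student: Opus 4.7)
The plan is to exploit the resolvent representation of fractional powers together with the second resolvent identity to reduce everything to $L^1$-bounded operations on free Bessel kernels and multiplication by $V$.

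I would first fix a large constant $M>0$ (larger than the modulus of any negative eigenvalue of $H$) so that $H+M\geq I$, then replace $\langle H\rangle^\alpha$ by $(H+M)^\alpha$ up to a finite-rank $L^1\to L^1$ correction coming from the point spectrum (eigenfunctions of $H$ below zero decay exponentially, so they lie in $L^1\cap L^\infty$), and similarly replace $(-\Delta+1)^{-\beta}$ by $(-\Delta+M)^{-\beta}$ up to convolution with an integrable Bessel kernel in dimension two. After these reductions it is enough to show that $(H+M)^{\alpha}(-\Delta+M)^{-\beta}$ is $L^1$-bounded.

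To do this I would invoke the Balakrishnan-type representation
\[
(H+M)^\alpha = \frac{\sin\pi\alpha}{\pi}\int_0^\infty \mu^{\alpha-1}\bigl(I-\mu(H+M+\mu)^{-1}\bigr)\,d\mu,
\]
combine it with the second resolvent identity $(H+M+\mu)^{-1}=(-\Delta+M+\mu)^{-1}-(H+M+\mu)^{-1}V(-\Delta+M+\mu)^{-1}$, and observe that the ``free'' part telescopes into $(-\Delta+M)^{\alpha-\beta}$, a Bessel convolution which is $L^1\to L^1$ bounded thanks to $\alpha-\beta<0$. This yields
\[
(H+M)^\alpha(-\Delta+M)^{-\beta} = (-\Delta+M)^{\alpha-\beta}+\frac{\sin\pi\alpha}{\pi}\int_0^\infty \mu^{\alpha} (H+M+\mu)^{-1}V(-\Delta+M+\mu)^{-1}(-\Delta+M)^{-\beta}\,d\mu.
\]
For the integral term I would use the chain
\[
L^1_x \xrightarrow{(-\Delta+M+\mu)^{-1}(-\Delta+M)^{-\beta}} L^{q'}_x \xrightarrow{V\cdot} L^1_x \xrightarrow{(H+M+\mu)^{-1}} L^1_x,
\]
with the outer operators controlled by rescaling-based estimates on the Bessel kernels in $\R^2$ (the kernel of $(-\Delta+M+\mu)^{-1}(-\Delta+M)^{-\beta}$ is a product of two Bessel functions, each giving a factor that is polynomial in $(M+\mu)^{-1}$ and $M^{-1}$), the second operator being bounded by $\|V\|_{L^q}$ via Hölder, and the last being estimated by a Neumann series $(H+M+\mu)^{-1}=\sum_{n\geq 0}(-(-\Delta+M+\mu)^{-1}V)^n(-\Delta+M+\mu)^{-1}$ whose $L^1\to L^1$ norm is $\les (M+\mu)^{-1}$ once $M$ is large enough that $V(-\Delta+M+\mu)^{-1}$ is contractive on $L^1$.

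The main obstacle is that the $L^{q'}$ intermediate embedding from $(-\Delta+M)^{-\beta}$ requires $\beta>1-1/q'=1/q$, so the bound obtained directly covers only part of the range. If $\beta$ is too small, I would remedy this by iterating the second resolvent identity one more time (inserting another factor of $V(-\Delta+M+\mu)^{-1}$) so that two Bessel kernels, not one, are available to raise the integrability, or by replacing the intermediate $L^{q'}$ space by a Lorentz space $L^{q',2}$ combined with the endpoint Young inequalities used elsewhere in the paper. The remaining routine step is to verify that the weighted $\mu$-integral $\int_0^\infty \mu^{\alpha}\cdot(\text{product of bounds})\,d\mu$ converges both at $\mu=0$ (where the Bessel kernel exponents are summable because $\beta-\alpha>0$) and at $\mu=\infty$ (where the extra $(M+\mu)^{-1}$ from the resolvent beats $\mu^\alpha$ since $\alpha<1$).
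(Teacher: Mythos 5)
Your overall architecture is a legitimate variant of the paper's proof: the paper subordinates \emph{both} fractional powers, writing $\langle H\rangle^\alpha(-\Delta+1)^{-\beta}$ as a double integral in $(\lambda,\mu)$ of $(H+\lambda_0)(H+\lambda_0+\lambda)^{-1}(-\Delta+1+\mu)^{-1}$, bounds this integrand in $\B(L^1)$ by $\min\big((\lambda_0+\lambda)^{-1},(1+\mu)^{-1}\big)$ using exactly your two ingredients (the Neumann series for the perturbed resolvent on $L^1$, and $V(-\Delta+1+\mu)^{-1}\in\B(L^1)$ via $L^1\to L^{q'}$ plus H\"older), and then interpolates the minimum before integrating. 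You instead use a single Balakrishnan integral for $(H+M)^\alpha$ together with the second resolvent identity; your identity $(H+M)^\alpha(-\Delta+M)^{-\beta}=(-\Delta+M)^{\alpha-\beta}+\frac{\sin\pi\alpha}{\pi}\int_0^\infty\mu^{\alpha}(H+M+\mu)^{-1}V(-\Delta+M+\mu)^{-1}(-\Delta+M)^{-\beta}\,d\mu$ is correct, as are the $L^1$ bounds for the Bessel potential $(-\Delta+M)^{\alpha-\beta}$ and the bound $\|(H+M+\mu)^{-1}\|_{\B(L^1)}\les(M+\mu)^{-1}$ for large $M$.

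The genuine gap is the one you flag yourself, and your proposed remedies do not close it. Write $G_{1,\mu}$ for the kernel of $(-\Delta+M+\mu)^{-1}$ and $G_\beta$ for that of $(-\Delta+M)^{-\beta}$ (singularity $|x|^{2\beta-2}$, exponential decay). Your two extreme pairings give either $\|G_{1,\mu}\ast G_\beta\|_{L^{q'}}\les_M\|G_{1,\mu}\|_{L^{q'}}\|G_\beta\|_{L^1}\les(M+\mu)^{-1/q'}$, which together with the $(M+\mu)^{-1}$ from $(H+M+\mu)^{-1}$ forces $\alpha<1/q'$ for convergence at $\mu=\infty$, or the pairing through $\|G_\beta\|_{L^{q'}}$, which needs $\beta>1/q$; for $1<q<2$ and $1-1/q\le\alpha<\beta\le 1/q$ neither works. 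Iterating the second resolvent identity does not repair this: its leading term $(-\Delta+M+\mu)^{-1}V(-\Delta+M+\mu)^{-1}(-\Delta+M)^{-\beta}$ has the same structure, because the last free resolvent can only act $L^1\to L^1$ (gaining one factor $(M+\mu)^{-1}$) and the burden of reaching $L^{q'}$ again falls on $(-\Delta+M+\mu)^{-1}(-\Delta+M)^{-\beta}$; a Lorentz refinement changes nothing in the $\mu$-homogeneity either. The correct and elementary fix is to split the integrability between the two kernels: by Young, $\|G_{1,\mu}\ast G_\beta\|_{L^{q'}}\le\|G_{1,\mu}\|_{L^r}\|G_\beta\|_{L^p}\les_M(M+\mu)^{-1/r}$ whenever $1+\frac1{q'}=\frac1r+\frac1p$ and $p<\frac1{1-\beta}$, i.e.\ for any $\frac1r<\min(1,\beta+\frac1{q'})$; since $\alpha<\beta<1$ one may choose $\frac1r>\alpha$, and then $\int_0^\infty\mu^\alpha(M+\mu)^{-1-1/r}\,d\mu<\infty$. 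This is precisely the analogue of the paper's interpolation $\min((\lambda_0+\lambda)^{-1},(1+\mu)^{-1})\le(1+\lambda)^{-\frac{\alpha+\beta}2}(1+\mu)^{\frac{\alpha+\beta}2-1}$, so with that one change your argument goes through on the full range. A minor further point: $\langle H\rangle^\alpha-(H+M)^\alpha$ is a function of $H$ supported on the whole spectrum, not a finite-rank correction from the point spectrum; the paper makes the same identification of $\langle H\rangle$ with $H+\lambda_0$ silently, so this is cosmetic, but your justification as stated is not right.
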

\begin{proof} For simplicity, we represent $\langle H \rangle$ as $H+\lambda_0$ for some sufficiently large $\lambda_0$. We make use of the following representation formula: for $0<p<1$
$$
A^{-p} = C_p \int_0^\infty (A+\lambda)^{-1} \lambda^{-p} \dd \lambda
$$
and
$$
A^{1-p} = C_p \int_0^\infty A(A+\lambda)^{-1} \lambda^{-p} \dd \lambda.
$$
Therefore
\be\lb{double}
\langle H \rangle^\alpha (-\Delta+1)^{-\beta} = C_\alpha C_\beta \int_0^\infty \int_0^\infty \lambda^{\alpha-1} \mu^{-\beta} (H+\lambda_0)(H+\lambda_0+\lambda)^{-1} (-\Delta+1+\mu)^{-1} \dd \lambda \dd \mu.
\ee
Note that $\|(-\Delta+1+\mu)^{-1}\|_{\B(L^1_x)} \les (1+\mu)^{-1}$ (easy to prove using duality and $L^\infty$),
$$
\|\Delta(-\Delta+1+\mu)^{-1}\|_{\B(L^1)} = \|-I+(1+\mu)(-\Delta+1+\mu)^{-1}\|_{\B(L^1_x)} \les 1,
$$
$$\begin{aligned}
\|V(-\Delta+1+\mu)^{-1}\|_{\B(L^1)} &\les \|V\|_{L^q_x} \|(-\Delta+1+\mu)^{-1}\|_{\B(L^1_x, L^{q'}_x)} \\
&\les \|V\|_{L^q_x} (1+\mu)^{\frac 1 q - 1} (q-1)^{-\frac 1 q} \les 1,
\end{aligned}$$
so $\|(H+\lambda_0)(-\Delta+1+\mu)^{-1}\|_{\B(L^1_x)} \les 1$. Similarly we see that for sufficiently large $\lambda_0$ and all $\lambda\geq 0$
$$\begin{aligned}
\|(H+\lambda_0+\lambda)^{-1}\|_{\B(L^1_x)} &\les \|(-\Delta+\lambda_0+\lambda)^{-1}\|_{\B(L^1_x)} \|(I+ V (-\Delta+\lambda_0+\lambda)^{-1})^{-1}\|_{\B(L^1_x)} \\
&\les (\lambda_0+\lambda)^{-1}
\end{aligned}$$
because $\|V (-\Delta+\lambda_0+\lambda)^{-1})^{-1}\|_{\B(L^1)} \leq 1/2$. Therefore 
$$
\|(H+\lambda_0)(H+\lambda_0+\lambda)^{-1}\|_{\B(L^1_x)} = \|I - \lambda(H+\lambda_0+\lambda)^{-1}\|_{\B(L^1_x)} \les 1.
$$
In conclusion
$$\begin{aligned}
\|(H+\lambda_0)(H+\lambda_0+\lambda)^{-1} (-\Delta+1+\mu)^{-1}\|_{\B(L^1_x)} &\les \min((\lambda_0+\lambda)^{-1}, (1+\mu)^{-1}) \\
&\les (1+\lambda)^{-\frac {\alpha+\beta} 2} (1+\mu)^{\frac {\alpha+\beta} 2 - 1}.
\end{aligned}$$
Plugging this back into (\ref{double}), we obtain that
$$
\|\langle H \rangle^\alpha (-\Delta+1)^{-\beta}\|_{\B(L^1)} \les \int_0^\infty \int_0^\infty \lambda^{\alpha-1} (1+\lambda)^{-\frac {\alpha+\beta} 2} \mu^{-\beta} (1+\mu)^{\frac {\alpha+\beta} 2 - 1} \dd \lambda \dd \mu < \infty.
$$
\end{proof}

\section*{Acknowledgement}
I would like to thank Sung-Jin Oh for the interesting discussion.

\end{document}